\newcommand{\oop}{\text{op}}
\newcommand{\tr}{\text{tr}}
\newcommand{\vare}{\text{Var}}
\newcommand*{\deq}{\mathrel{\rlap{%
\raisebox{0.3ex}{$\m@th\cdot$}}%
\raisebox{-0.3ex}{$\m@th\cdot$}}=}
\theoremstyle{plain}
\newtheorem{assumption}{Assumption}
\newtheorem{theorem}{Theorem}[section]
\newtheorem{lemma}[theorem]{Lemma}
\newtheorem{proposition}[theorem]{Proposition}
\newtheorem{corollary}[theorem]{Corollary}
\theoremstyle{remark}
\newtheorem{definition}[theorem]{Definition}
\newtheorem*{example}{Example}
\newtheorem*{remark}{Remark}
\begin{document}

\begin{frontmatter}
\title{%
Lag selection and estimation of stable parameters for\\ multiple autoregressive processes through\\
convex programming
}
\runtitle{estimation of stable parameters for\\ multiple autoregressive processes}

\begin{aug}

\author[]{\fnms{Somnath}~\snm{Chakraborty}\ead[label=e1]{somnath.chakraborty@ruhr-uni-bochum.de}\orcid{0000-0002-5369-3053}}, 
\author[]{\fnms{Johannes}~\snm{Lederer}\ead[label=e2]{johannes.lederer@ruhr-uni-bochum.de}\orcid{0000-0002-5369-3053}}
\and
\author[]{\fnms{Rainer}~\snm{von Sachs}\ead[label=e3]{rainer.vonsachs@uclouvain.be}}

\address[A]{Fakult\"{a}t f\"{u}r Mathematik, Ruhr-Universit\"{a}t Bochum, 44801 Bochum, Deutschland\printead[presep={,\ }]{e1,e2}}

\address[B]{Institut de Statistique, Biostatistique et Sciences Actuarielles, LIDAM,  UCLouvain, 1348 Louvain-la-Neuve, Belgium\printead[presep={,\ }]{e3}}
\end{aug}


\begin{abstract}
Motivated by a variety of applications,
high-dimensional time series have become an active topic of research.
In particular, several methods and finite-sample theories for individual stable autoregressive processes with known lag have become available very recently. We, instead, consider multiple stable autoregressive processes that share an unknown lag. We use information across the different processes to simultaneously select the lag and estimate the parameters. We prove that the estimated process is stable, and we establish rates for the forecasting error that can outmatch the known rate in our setting.
Our insights on the lag selection and the stability are also of interest for the case of individual autoregressive processes.
\end{abstract}

\begin{keyword}[class=MSC]
\kwd[Primary ]{62M10}
\kwd{62L99}
\kwd[; secondary ]{49M29}
\end{keyword}

\begin{keyword}
\kwd{regularised least square}
\kwd{LASSO}
\kwd{autoregressive process}
\kwd{hierarchical-group norm}
\kwd{dual norm}
\kwd{stability}
\kwd{sample complexity}
\end{keyword}

\end{frontmatter}

\section{Introduction}\label{Intro}

Today's world of acquisition of complex data in areas such diverse as macroeconomics and finance, everyday weather predictions, brain imaging, and many more, has called for intelligent model approaches that avoid needing to use potentially a (too) high number of model parameters per available sample size. Moreover, often these data are of high dimensionality - as they arise together in a panel or in the form of a multivariate vector. These stylized facts render the purpose of predicting the evolution of these data into the (near) future really challenging. To face this challenge choosing a data generating model that assumes some {\em common} underlying structure relating the different components of the observed multivariate data set will not only turn out to be advantageous but reflects the observation that the different series do not behave independently from each other - they might actually be driven by latent (i.e. unobservable) mechanism (such as a leading economic indicator, or a global climate trend, etc, often modelled by a latent factor model). Moreover,  we almost always observe {\em serial correlation} between present and past observations, which traditionally has been modelled by assuming some sort of weak dependence over time (translating into dynamic latent factor models, e.g., \cite{Forni}.

In this context, as factor modelling does not necessarily allow for component-wise prediction, the approach of (parametric) vector autoregression (VAR) has already for a long time become an overly prominent tool for modeling such multivariate time series - with in particular the idea that the common serial dependence is limited by the existence of a common maximal lag-order for all components. However, as the number of component series is increased, VAR models have the known tendency to become overparametrized. In the virtue of having to do with a high-dimensional parameter estimation problem, more recent possibilities to address this issue are {\em regularized} approaches, such as the LASSO for estimating the parameters of these models (essentially by some kind of regularised least-squares approach, see, for example, \cite{MR2755014}). This is in contrast to more traditional approaches (based mostly on information criteria for lag-order selection such as AIC, BIC, etc.) which address overparametrization by selecting a low lag order, based on the assumption of short range dependence, assuming that a universal lag order applies to all components. For a good forecast performance in a  high-dimensional context, these approaches turned out to fall behind the LASSO - which, until recently, did however not incorporate the notion of lag order selection. It has been only the recent work by \cite{MR4209452} that proposed a class of hierarchical lag structures that embed the notion of lag selection into a convex regularizer. The key modeling tool has been a group LASSO with nested groups which guarantees that the sparsity pattern of lag coefficients honors the VAR’s ordered structure. For more details on the literature on dimension reduction methods which address the VAR’s overparametrization problem we refer to Section 2 of the mentioned work by \cite{MR4209452}. A clear shortcoming, however, of this approach is the necessity to model all components of the observed multivariate time series to be of the same data length, a constraint in classical VAR-modelling that cannot be circumvented.

 Motivated by the approach of \cite{MR4209452}, in this paper, we propose a method to analyse multiple stable autoregressive processes of (potentially) {\em different lengths} in the framework of regularized LASSO, where the regularization is achieved via an overlapping group-norm that induces sparsity at the group level. Moreover, we show that, even in absence of any information on the maximum lag of the processes, the proposed framework estimates the true lag and the coefficients of the AR model. Finally, we show that the model fitted with the AR coefficients returned by this proposed method is stable.  As our results on statistical guarantees are essentially of non-asymptotic nature - interesting even in the context of observing a single time series -  we first review the (sparse) literature on those non-asymptotic results in a time series context, before we turn in more detail to the similarities and differences between our and the approach of \cite{MR4209452}.

Most of the research in time series analysis --- until recently --- focused on deriving asymptotic behaviour of the predictors. This severely restricts applicability of these results, especially in the reign of low sample-to-predictable ratio. Popular approaches to overcome this nuisance of dependency have been using the assumption of stability (leading to stationarity) of the data-generating process. For example, both \cite{MR2816348} and \cite{MR3015038} used stability in deriving the guarantees in small sample regimes; however, these works established these results under the condition that the coefficient matrices are severely norm-bounded (namely, the sum of the operator-norms of the coefficient matrices is smaller than 1), which is much stronger than stability of the process determined by those coefficients. Recently, \cite{MR3357870} made a big stride towards understanding the effect of temporal and cross-sectional dependency in the small sample regime. The underlying hypothesis in that work was that data be amenable to modelling via stable vector autoregressive process; they tracked the restrictions on the spectral domain --- as enforced by stability of the process --- and derived non-asymptotic prediction guarantees for high dimensional vector autoregressive process with Gaussian white noise innovation.
Several follow-up works (see \textit{e.g.} \cite{Wong_2020}, \cite{masini}, and the references there-in) then extended their results to the case where the innovations are heavy-tailed, and moreover, they derived guarantees assuming only the stationarity and finiteness of second moment of the  underlying process --- conditions weaker than stability. 

 In the remainder of this Introduction we go now into more details about the relation of our approach to the one of \cite{MR4209452}. Essentially, the latter contributed by finding out that algorithmically the method by \cite{MR2845676} (and its computationally faster amendment by \cite{tseng}) can be used in such a setting to address, in the presence of prior information on an upper bound $L$ of the true unknown lag order, estimation of  
 order $L$ vector-autoregressive models of dimension $M$, abbreviated  ${\mbox{VAR}}_M(L)$ in the sequel. Our work can now be seen as fitting a common (i.e. “diagonal vector”) autoregressive model to a multivariate time series of dimension $M$, i.e. a panel of $M$ observed (univariate) 
 time series of {\em in general not equal} lengths $n_m, 1 \leq m \leq M$.  We address the challenging question on how to choose, solely from the information available in the model for the observed data, a common appropriate lag order $L$ that allows us to phrase and solve our problem via a penalised LASSO approach.  We derive non-asymptotic bounds on the multivariate one-step ahead prediction error and estimate the collection of the autoregressive coefficients $\mathbf{\beta} \deq (\beta^{m}_{1}, \ldots, \beta^{m}_{L})_{1 \leq m \leq M}$ under the paradigm of sparseness. Assuming that there is a common true unknown lag order $L_0$ that generated our $M$ time series, our algorithm, akin \cite{MR4209452} (and \cite{MR2845676})   for fitting a common model is based on a modification of a hierarchical group LASSO approach:  We first determine an appropriate (minimal) upper bound $L\geq L_0$ depending essentially on the sample size $n_{\min}$ of the shortest observed time series component (and on $M$, of course) from a thorough analysis of the theoretical complexity of our group-LASSO based approach. With this appropriate $L$, necessary to embed our autoregression problem into the framework of high-dimensional multivariate regression, we transfer existing technology on LASSO estimation (with overlapping hierarchically constructed groups) to our problem. We derive statistical guarantees for the estimators  $\mathbf{\hat\beta}$,  solution of our aforementioned learning algorithm, and for the estimator $\hat{L}_0$ (essentially taken from the support of $\mathbf{\hat\beta}$).
 More specifically we deliver non-asymptotic bounds on the multivariate one-step ahead prediction error, on the estimation error of $\mathbf{\beta} $, on the false discoveries for the support of  $\mathbf{\beta}$, and quite innovatively on the stability of the fitted model. For the latter, we show that the fitted autoregressive model of order  $\hat{L}_0$  with estimated coefficients $\mathbf{\hat\beta}$ fulfils the conditions of the true model for stability (via a more explicit concept of $\varepsilon$-stability that we introduce to asses the difficulty of the statistical estimation problem). 
 
In the following paragraph we are even more explicit about the exact nature of our contributions motivated from the existing limitations of the current approaches we found in the literature.

\paragraph*{Current limitations and our contributions}
Some of the questions that are not sufficiently addressed in recent existing work on non-asymptotic time series analysis 
are as follows. 
\begin{itemize}
\item[L1]
\cite{MR4209452} illustrate that an approach based on LASSO with overlapping groups (such as in \cite{MR2845676}) can determine the component-wise lag orders $L_{m}$ of stable, high-dimensional vector autoregressive processes (including the "cross-over" lag order $L_{ij}$ of the dependence of the $i-$th component on the $j-$th component of a VAR model).
Their formulation based on (dual) convex programming is computationally attractive and intriguing more generally,
but it requires a uniform upper-bound on the $L_{i,j}$ as a parameter both in their theoretical bounds and in practice.
The current literature either ignores this issue altogether or sets those bounds based on model selection such as AIC or BIC or via Bayesian shrinkage,
whose suitability is unclear here.

\item[C1] 
We mimic \cite{MR4209452}'s methodological approach,
but we establish a suitable upper bound for the lag orders. 
Our choice on this minimal upper bound guarantees the following:
\begin{enumerate}
\item the smallness of the empirical prediction risk (see Theorem~\ref{lem:main1} and the discussion following the theorem),
    \item a resulting tuning parameter that is not too large (see Corollary~\ref{rates}), and 
    \item the restricted eigenvalue property of the data matrix to hold (Theorem~\ref{sc-07-11-1} and the Corollary~\ref{res} immediately after that).
\end{enumerate}
In this sense, our upper bound on the lag order is optimal for our setup. 


\item[L2] Real world applications often involve multiple univariate, decoupled time series.
This would translate to VAR modeling with a diagonal coefficient matrix. However, purely transferring existing results from a ${\mbox{VAR}}_M(L)$ modeling approach can be cumbersome in practice for
situations in which the number of available samples for each individual component time series is not the same:  obviously needing to chop off the samples in order to work with the minimal individual sample size could result in possibly weaker theoretical guarantees and practical performances (see below). 
\item[C2] 
Applying a hierarchical group norm enables us to derive prediction guarantees that depend on the {\em average} number of samples 
per component (instead of the minimum number of samples per model, as would be the 
case had we translated naively to a $M$-dimensional VAR model). 
More specifically, availability 
of perfect information on the true lag $L_0$ and $n_1 = L_0+T_1,\cdots,n_m = L_0+T_M$ 
(respective) number of samples for the $M$ individual components yields the 
following: the ${\mbox{VAR}}_M(L)$ translation would result in the following provable 
error (see \cite{MR4209452}), stating that with high probability 
\begin{equation}\label{intro1}
|\!|\hat{\boldsymbol{\beta}}-\boldsymbol{\beta}|\!|_2=O \left(\sqrt{\frac{\log(M^2L_0)}{n_{\min}M}}\right)\,,\end{equation} 
whereas the 
error from the algorithm we describe is of the order (see equation \eqref{vvv} below)
\begin{equation}\label{intro2}
|\!|\hat{\boldsymbol{\beta}}-\boldsymbol{\beta}|\!|_2= O\left(\sqrt{\frac{\log(ML)}
D}\right)\,.
\end{equation} Here $D=T_1+\cdots+T_M$ is the total number of "postsamples" ($T_m \deq n_m - L_0$). 
Furthermore, our strategy results in weaker dependency of 
the error on the behaviour of the reverse characteristic 
polynomial on the unit disk, unlike in the relevant ${\mbox{VAR}}_M(L)$ model translation (compare with \cite{MR3357870}).

\item[L3] Recall that a (univariate) autoregressive process $X_t=a_1X_{t-1}+\cdots+a_LX_{t-L}+U_t$ is stable if the ``reverse characteristics polynomial'' $1-a_1z-\cdots-a_Lz^L$ has no complex roots on the closed unit disk;
it is known that stability implies stationarity (see Section~\ref{prelims} below). But what about stability for {\em fitted} autoregressive models? While this question has an affirmative answer in the special case of Yule-Walker estimation of the coefficients of a univariate autoregressive process (known however to be less efficient), the question of stability or stationarity of the process reconstructed from the  parameters estimated by LASSO-based approaches does not seem to have been addressed in the literature. However, starting with a stable process to have generated the input observations of these devised algorithms, it is reasonable to expect that the reconstructed process be stable (and thus, multi-step predictions be reliable as well). 


\item[C3] We show, in Theorem~\ref{sc1-07-11:2}, that the process reconstructed from the parameters returned by our algorithm is stable when the samples available as input are generated by stable processes. As \cite{MR3357870} showed, a measure of stability for an autoregressive process is, equivalently, a boundedness criteria on the spectral density, and the boundedness in the Fourier domain translates in a sense to `smoothness' of the process in the temporal domain. Thus, as, intuitively, the stable autoregressive processes form a `smooth' subclass, it is desirable that any algorithm for learning the parameters of processes from this smooth subclass should return estimators lying in this subclass; in this paper, this is ensured by Theorem~\ref{sc1-07-11:2}. 

It is important to mention here that the results in this paper demonstrate that our overlapping group-lasso approach yields a stable process when the underlying process is stable as well. The aim of the paper is not to choose the most optimal tuning parameters or some absolute constants, but rather to show existence of these proposing reasonable candidates for such parameters/constants.


\end{itemize}

\subsection*{Organization of the paper}
The paper is organized as follows. In Section \ref{prelims}, we recall relevant definitions from existing literature, and we set notations. In Section \ref{sec4}, (1) we specify the model and formulate the learning problem as a regularized group-LASSO with overlapping group norm, where the data matrix is a block-diagonal matrix --- each block of which consists of the data matrix that treats a least-squares problem corresponding to the associated component time series; (2) we present the learning algorithm based on the group-LASSO problem. Section \ref{sec:7} contains the bulk of the technical contents, in particular, the proof of the statements of the main results, already presented at the end of Section \ref{sec4}. This Section \ref{sec:7} is divided into four subsections: Subsection \ref{pred-error} presents an oracle inequality bounding the one-step-ahead prediction error (Theorem \ref{lem:2-9-sc4}), as well as a high-probability bound on the effective noise of the model (Theorem \ref{lem:main1}). Subsection \ref{rip} starts with restricted eigenvalue bounds (Proposition \ref{sc-07-11-1}) for the blocks of the data matrix, and goes on to integrate the blockwise results to finally arrive at an estimate (Theorem \ref{prop:21-10-sc1}) of the error in estimating the AR-coefficients. Finally, combining the results from these subsections, stability of the estimated AR model (Theorem \ref{sc1-07-11:2}) has been established in Subsection \ref{stasta}. All proofs of auxiliary results are deferred to a series of Appendices.




\section{Preliminaries}\label{prelims}


\subsection{General notations}\label{section:notation1}
\begin{itemize}
\item $M_d(\mathbb F)$ denotes the ring of $d\times d$ matrices with entries in the field $\mathbb F\in \{\mathbb R,\mathbb C\}$, and for $M\in M_d(\mathbb F)$, we write $M^\top$ for the transpose; if $\mathbb F=\mathbb C$, then $M^\star$ denotes the conjugate transpose. 
\item $\mathbb D$ is the complex closed unit disk $\mathbb D\deq\{z\in\mathbb C:|z|\leq 1\}$, and its boundary is $\partial\mathbb D\deq\{z\in\mathbb C:|z|=1\}$.
\item For any integer $d>0$, if $x\in \mathbb R^d$, then $|\!|x|\!|_2=\sqrt{x^\top x}$, and $\mathbb S^{d-1}\deq\{x\in\mathbb R^d:|\!|x|\!|_2=1\}$; in particular, under standard identification $\mathbb C=\mathbb R^2$, we have $\mathbb S^1=\partial \mathbb D$. 
\item For integer $n>0$, we will denote the set $\{1,2,\cdots,n\}$ by $[n]$.
\item For a vector $\hat{\boldsymbol{\beta}}_m=(\hat{\boldsymbol{\beta}}_{m,1},\cdots,\hat{\boldsymbol{\beta}}_{m,L})$ and an integer $0<L_0\leq L$, we write \begin{equation}\label{cut-out}
\hat{\boldsymbol{\beta}}_m(L_0)\deq (\hat{\boldsymbol{\beta}}_{m,1},\cdots,\hat{\boldsymbol{\beta}}_{m,L_0})\,.
\end{equation}
\end{itemize}

\subsection{Notations for autoregressive process}\label{section:notation2}
\begin{enumerate}
\item[{\bf{Conventions:}}] We use $X,Y,Z,\dots$ to denote random variables, and $\boldsymbol{X},\boldsymbol{Y}, \boldsymbol{Z},\dots$ to denote random vectors. On the other hand, we use $a,b,c,\dots$ to denote real (or complex) constants, and $\boldsymbol{a},\boldsymbol{b}, \boldsymbol{c},\dots$ for vector-valued constants. 
\item[\bf{Notations:}]
\begin{itemize}
\item For $d$-dimensional autoregressive process \begin{align}X_t\deq A_1X_{t-1}+\cdots+A_LX_{t-L}+U_t\,,\label{eq:defn-1}\end{align} and $z\in\mathbb C$, we write \begin{align}\label{eq:poly}{\bf \mathcal A}_z\deq I-A_1z-\cdots-A_Lz^L\,.\end{align}
\item $L$ will denote an initially determined ``ad-hoc" upper-bound on the true lag of the process in (\ref{eq:defn-1}), and $L_0$ the true lag. 
\end{itemize}
\end{enumerate}

\begin{definition}[Weak stationarity]
A $d$-dimensional time series $\{X_t\}_{t\in\mathbb Z}$ is said to be {\em weakly stationary} if the following holds: $a)~\mathbb E[|\!|X_t|\!|^2_2]<\infty$ for all $t\in\mathbb Z$, $b)~\mathbb E[X_t]=\mu$ for all $t\in\mathbb Z$, and $c)$ $\mathbb E[X_tX_{t-h}^\top]=\Gamma(h)$ for all $t,h\in\mathbb Z$.
\end{definition}

\begin{definition}[Strong stationarity]
A $d$-dimensional time series $\{X_t\}_{t\in\mathbb Z}$ is said to be {\em strongly stationary} if for each integer $n>0$, and all integers $t_1,\cdots,t_n,h$, the distributions of the vectors $(X_{t_1},\cdots,X_{t_n})$ and $(X_{t_1+h},\cdots,X_{t_n+h})$ are identical.
\end{definition}

\begin{definition}[Autoregressive time series]
A $d$-dimensional time series $\{X_t\}_{t\in\mathbb Z}$ is autoregressive of lag at most $L>0$ if there are $d\times d$ matrices $A_1,\cdots,A_L$ such that \begin{align}\label{eq:defn1}X_t=A_1X_{t-1}+\cdots+A_LX_{t-L}+U_t\,.\end{align} holds for all $t\in\mathbb Z$, for some random white noise process $\{U_t\}_{t\in\mathbb Z}$.
\end{definition}

Associated to each $d$-dimensional lag-$L$ autoregressive process $\{X_t\}_{t\in\mathbb Z}$ --- as in equation (\ref{eq:defn1}) --- is the associated order-1 process ${\bf X}_t={\bf A}{\bf X}_{t-1}+{\bf U}_t$, where \begin{align}\label{eq:var}{\bf A}\deq\begin{pmatrix}A_{1\rightarrow L},A_L\\{\bf I}_{dL-d}\,,{\bf 0}\end{pmatrix}\,,\hspace{1cm}{\bf U}_t\deq\begin{pmatrix}U_t\\ {\bf 0}\end{pmatrix}\,,\end{align} and $A_{1\rightarrow L}$ is the block matrix $(A_1 \cdots A_{L-1})$.

\begin{definition}[Stability]
A $d$-dimensional lag-$L$ autoregressive process $\{X_t\}_{t\in\mathbb Z}$ --- as in equation \ref{eq:defn1} --- is said to be stable if $\det({\bf I}-{\bf A}z)\neq 0$ for $|z|\leq 1$. Equivalently, the process is stable if $\det({\mathcal A}_z)\neq 0$ 
for $|z|\leq 1$.
\end{definition}


\begin{definition}[Reverse characteristic polynomial]
The polynomial $\det({\bf \mathcal A}_z)$ is called the {\em reverse characteristic polynomial} of the process in equation \ref{eq:defn1}.
\end{definition}

We note the equality $\det({\bf I}-{\bf A}z)=\det({\bf \mathcal A}_z)$. In particular, the process in equation (\ref{eq:defn1}) is stable if and only if every eigenvalue of ${\bf A}$ is inside the open unit disk.\\

\begin{definition}[$\epsilon$-stability]\label{defn:new1}
A stable autoregressive process, as in equation \eqref{eq:defn1}, is said to be $\epsilon$-stable for an $\epsilon\in (0,1)$ if the following holds: \begin{align}\label{eq:quasi}\epsilon\leq \min_{|z|=1}|\det({\bf \mathcal{A}_z})|\leq \max_{|z|=1}|\det({\bf \mathcal{A}_z})|\leq \epsilon^{-1}\,.\end{align}
\end{definition}

\begin{remark}
By maximum modulus principle, this is equivalent to saying that $$\epsilon\leq \min_{|z|\leq 1}|\det({\bf \mathcal{A}_z})|\leq \max_{|z|\leq 1}|\det({\bf \mathcal{A}_z})|\leq \epsilon^{-1}\,.$$ The lower-bound here is a convenient quantification of the notion of stability, which demands that $\min_{|z|\leq 1}|\det({\bf \mathcal{A}_z})|>0$. Note that we also require the upper bound to derive our statistical guarantees.
\end{remark}

A well-known fact about autoregressive processes is the following: see \citet[proposition~2.1]{MR2172368} for details.

\begin{lemma}[{{Stability implies weak stationarity}}]\label{fact:stab}
A stable autoregressive process is weakly stationary.
\end{lemma}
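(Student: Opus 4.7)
The plan is to reduce to the companion VAR(1) form in \eqref{eq:var} and exploit that, per the remark following Definition~\ref{defn:new1}, stability is equivalent to $\det(\mathbf{I}-\mathbf{A}z)\neq 0$ for all $z\in\mathbb{D}$, i.e.\ every eigenvalue of $\mathbf{A}$ lies strictly inside the open unit disk. Consequently the spectral radius $\rho(\mathbf{A})<1$, so by Gelfand's formula there exist constants $C>0$ and $\rho\in(\rho(\mathbf{A}),1)$ such that $\|\mathbf{A}^k\|_2\leq C\rho^k$ for every $k\geq 0$. This geometric decay is the single analytic ingredient that drives the entire argument.

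First, I would use the decay to construct a causal moving-average representation
\[
\mathbf{X}_t \deq \sum_{k=0}^{\infty} \mathbf{A}^k \mathbf{U}_{t-k}\,.
\]
Writing $\Sigma_{\mathbf{U}}$ for the covariance of the lifted innovation $\mathbf{U}_t$, zero-mean whiteness of $\{U_t\}$ transfers to $\{\mathbf{U}_t\}$, and the partial sums are Cauchy in $L^2$: for $N>N'$,
\[
\mathbb{E}\Bigl\|\sum_{k=N'+1}^{N}\mathbf{A}^k\mathbf{U}_{t-k}\Bigr\|_2^2 \;\leq\; \tr(\Sigma_{\mathbf{U}})\sum_{k=N'+1}^{N}\|\mathbf{A}^k\|_2^2 \;\leq\; C^2\tr(\Sigma_{\mathbf{U}})\sum_{k>N'}\rho^{2k}\,,
\]
which vanishes as $N'\to\infty$. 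A direct reindexing then shows $\mathbf{X}_t=\mathbf{A}\mathbf{X}_{t-1}+\mathbf{U}_t$, so reading off the first block of coordinates recovers the original recursion \eqref{eq:defn1}. Uniqueness among processes of bounded second moment follows by iterating the recursion and observing that the remainder $\mathbf{A}^{N+1}\mathbf{X}_{t-N-1}$ vanishes in $L^2$ as $N\to\infty$ by the same geometric bound.

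Second, I would verify the three defining conditions of weak stationarity for the projection $X_t$ of $\mathbf{X}_t$ onto its first $d$ coordinates. Finiteness of $\mathbb{E}\|X_t\|_2^2$ is immediate from the $L^2$-convergence above. The mean $\mathbb{E}[\mathbf{X}_t]$ equals $0$ for every $t$, since $L^2$-convergence forces $L^1$-convergence and each $\mathbf{U}_{t-k}$ is centered. For the autocovariance, orthogonality of the white noise yields, for $h\geq 0$,
\[
\mathbb{E}[\mathbf{X}_t\mathbf{X}_{t-h}^{\top}] \;=\; \sum_{k=0}^{\infty}\mathbf{A}^{k+h}\,\Sigma_{\mathbf{U}}\,(\mathbf{A}^{k})^{\top}\,,
\]
which is absolutely convergent thanks to the geometric bound and manifestly independent of $t$. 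Extracting the top-left $d\times d$ block supplies the desired $\Gamma(h)$.

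The main obstacle is not any single estimate but the existence-and-uniqueness step: one has to argue that the $L^2$-limit defines a bona-fide stochastic process satisfying the AR recursion and that no competing weakly stationary solution can coexist with it. The geometric bound $\|\mathbf{A}^k\|_2\leq C\rho^k$ is precisely what makes both existence (summability of the MA coefficients) and uniqueness (vanishing telescoping remainder) go through; once this is in place, verifying weak stationarity is routine block-matrix bookkeeping.
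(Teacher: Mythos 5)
Your argument is correct and is essentially the standard proof of this classical fact: the paper itself offers no proof but defers to \citet[Proposition~2.1]{MR2172368}, which establishes weak stationarity via exactly the same route — companion VAR(1) form, spectral radius strictly less than one, geometric decay of $\|\mathbf{A}^k\|$, and the resulting causal MA($\infty$) representation whose second-moment structure is time-invariant. The only caveat worth noting is that your uniqueness step presupposes the candidate solution has bounded second moments, which is the standard (and standardly glossed-over) qualification in this argument.
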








\section{Statistical Model and Estimator}\label{sec4}
This section introduces our statistical model and estimator, and presents an algorithm to learn the parameters of the model from observed samples.

\subsection{Statistical Model}\label{stat-model}
We start with the model.
Suppose that we observe time-samples generated by $M$ univariate 
autoregressive process, for which we know a uniform upper-bound $L$ of the true lag-order. Then, we can aggregate these $M$ univariate lag at most $L$ autoregressive processes
\begin{align}\label{eq:0}
    X^1_t~&=~\beta_1^1 X_{t-1}^1+\cdots+\beta_{L}^1 X_{t-L}^1 +U^1_t\,;\nonumber\\
    &~\,\vdots\\
    X^M_t~&=~\beta_1^M X_{t-1}^M+\cdots+\beta_{L}^M X_{t-L}^M +U^M_t\nonumber\,.
\end{align}
In this paper, we work under the simplified assumption that the true lag of all the $M$ component processes is identical, namely, $L_0$, and that $L\geq L_0$ is generic; neither $L_0$ nor $L$ is known \emph{a priori}. Additionally, we assume mean-zero, Gaussian white-noise innovations; that is, for each $m\in [M]$ 
the set $\{U^m_t\}_{t\in\mathbb Z}$ consists of independent 
mean-zero, univariate Gaussians with coordinate-wise standard deviation $\sigma_m\in(0,\infty)$. Additionally, we assume that for each $t\in\mathbb Z$, the noise variables 
$U^1_t,\dots,U^M_t$ are independent.
We summarize the parameters of the model in a matrix $\Theta\in\mathbb{R}^{M\times L}$ 
via $\Theta_{ml}\,\deq\beta_l^m$ to refer to groups of parameters more easily later.
Our goal is 1.~to estimate the parameters 
of \textit{all} models simultaneously; and 2.~to assess the lag $L_0$, which is 
assumed to be the same over all $M$~processes.\\

We make the following assumption on the absolute value of the smallest $\beta$-coefficient,
which is widely known as \emph{$\beta$-min assumption} in the LASSO literature; see, for example, \cite{MR2461898}. We note that this assumption will only be needed to achieve the bound in Theorem \ref{algorithm-thm} after $\lambda$-thresholding; 
in particular, when no thresholding is employed, the analysis in this paper does not require the assumption.

\begin{assumption}[{\emph{$\beta$-min assumption}}]\label{beta-min}
There is an absolute constant $c_{\beta}>0$ such that the true autoregressive coefficient vector $\boldsymbol{\beta}$ satisfies \begin{equation}\label{beta-min1}\boldsymbol{\beta}^m_j\neq 0~\Rightarrow~\boldsymbol{\beta}^m_j\geq c_\beta\,.\end{equation}
\end{assumption}
\noindent
Broadly speaking,
this assumption ensures that the non-zero coefficients can be detected in the first place.
We now set out to define a regularizer. Let $n_j$ denote the total number of samples from $$X_t^j=\beta^j_1X^j_{t-1}+\cdots+\beta^j_LX^j_{t-L}+U^j_t\,,$$ and $T_j\deq n_j-L$. The main idea is as follows.
Suppose that $L\geq L_0$ is some integer, and for each $t\in\{- L+1,\dots,1,\dots,T_m\}$ and $m\in\{1,\dots,M\}$, we have an observation $x_t^m$ of $X_t^m$. Denote $\boldsymbol{\beta}_m\deq(\beta^m_1, \dots,\beta^m_L)^\top$ for each $m\in [M]$, and let $\boldsymbol{\beta}\deq(\boldsymbol{\beta}_1^\top,
\dots, \boldsymbol{\beta}_m^\top)^\top$.
We define $\mathcal{G}_1,\dots, \mathcal{G}_L\subset S_{ML}\deq\{1,2,\dots, M\}\times \{1,2,\dots,L\}$ by 
\begin{equation}\label{groupsdef}
    \mathcal{G}_l~\deq\{1,2,\dots,M\}\times \bigl\{l,l+1\dots,L\bigr\}
\end{equation}
for all $l\in\{1,\dots,L\}$. The groups are nested: $\mathcal{G}_1\supset \dots\supset\mathcal{G}_L$. Let $\boldsymbol{\beta}_{{\mathcal G}_l}\in \mathbb R^{M\times (L-l+1)}$ be the submatrix of $\boldsymbol{\Theta}$, consisting of columns having index larger or equal to~$l$.
We set the group norm to be 
\begin{align}\label{Neq}
|\!|\boldsymbol{\beta}|\!|_{\mathcal G}~&\deq \sum_{l=1}^L\sqrt{M(L-l+1)}~|\!|\boldsymbol{\beta}_{{\mathcal G}_l}|\!|_{\mathbb F}\,,\\ \mbox{where}\hspace{1cm}
    |\!|\boldsymbol{\beta}_{\mathcal{G}_l}|\!|_{\mathbb F}~&\deq\sqrt{
    \sum_{m=1}^{M}\sum_{j=l}^{L}|\beta_j^m|^2}\nonumber\,.
\end{align} is the Frobenius norm of $\boldsymbol{\beta}_{\mathcal{G}_l}$. We will alternatively write ${\mathcal N}(\boldsymbol{\beta})$ for the group norm $|\!|\boldsymbol{\beta}|\!|_{\mathcal G}$, for the sake of notational ease.

The overall post-sample size is denoted 
\[D\deq T_1+\cdots+T_M\,.\]

In order to estimate the coefficient vector 
$\boldsymbol{\beta}$, we propose 
solving the following constrained convex 
program

\begin{align}\label{eq:p1}
\mbox{minimize}&&\frac 1D\sum_{m=1}^M\sum_{t=1}^{T_m}
\bigl(x_t^m-\beta^m_1 x_{t-1}^m-\cdots-\beta^m_Lx_{t-L}^m\bigr)^2+\lambda
|\!|\boldsymbol{\beta}|\!|_{\mathcal G}
\,,\end{align}
with an appropriate tuning parameter $\lambda>0$.


\noindent 
The objective function can be put in a concise form.
For this, we define the vector $\boldsymbol{y}\in\mathbb{R}^D$, the matrix $X\in\mathbb{R}^{D\times(ML)}$, and the parameter $\boldsymbol{\beta}\in\mathbb{R}^{ML}$, as follows:
\begin{align}\label{data}
    \nonumber\boldsymbol{y}&\deq (x_1^1,\dots,x_{T_1}^1,x_1^2,\dots,x_{T_2}^2,\cdots,x_1^M,\dots,x_{T_M}^M)^\top\,;\nonumber \\
X&\deq
\begin{pmatrix}
x_{1-1}^1,\dots,x_{1-L}^1\\
\vdots\\
x_{T_1-1}^1,\dots,x_{T_1-L}^1\\
&x_{1-1}^2,\dots,x_{1-L}^2\\
&\vdots\\
&x_{T_2-1}^2,\dots,x_{T_2-L}^2\\
&&\ddots\\
&&&x_{1-1}^M,\dots,x_{1-L}^M\\
&&&\vdots\\
&&&x_{T_M-1}^M,\dots,x_{T_M-L}^M\\
\end{pmatrix}\,; \\
    \boldsymbol{\beta}&\deq(\beta_1^1,\dots,\beta_L^1,\beta^2_1,\dots,\beta^2_L,\cdots,\cdots,\beta_1^M,\dots,\beta_L^M)^\top\,.\nonumber
\end{align}
\noindent 
It is immediate that 
\begin{equation*}
    \sum_{m=1}^M\sum_{t=1}^{T_m}\bigl(x_t^m-\beta^m_1 x_{t-1}^m-\cdots-\beta^m_Lx_{t-L}^m\bigr)^2~=~|\!|\boldsymbol{y}-X\boldsymbol{\beta}|\!|_2^2\,.
\end{equation*}
In conclusion, the above estimation program is equivalent to
\begin{align}\label{eq:1}
    \widehat{\boldsymbol{\beta}}~\in~\operatornamewithlimits{argmin}_{\boldsymbol{\beta}\in\mathbb{R}^{ML}
    }\biggl\{\frac 1D|\!|\boldsymbol{y}-X\boldsymbol{\beta}|\!|_2^2+\lambda
    |\!|\boldsymbol{\beta}|\!|_{\mathcal{G}}
    \biggr\}\,.
\end{align}
Hence, the estimator can be cast as a modified group-lasso estimator, which means that we can use established group-lasso algorithms that allow for overlapping groups \citep{MR2845676}. 
In essence, the above estimator generalizes the \emph{elementwise} estimator $\mbox{HLag}^{\text{E}}$ in \cite{MR4209452} to multiple time series. Note that, the ordinary LASSO estimator---as well as any group-LASSO estimators with non-overlapping groups---enforces sparsity by setting coefficients to zero without paying heed to the fact that when only an upper-bound to the true-lag $L_0$ is an input to the regression---\emph{all} coefficients indexed between $L_0+1$ and $L$ are supposed to be zero before any coefficient with index smaller or equal to~$L_0$; however, the penalty obtained via the chained groups $\mathcal{G}_1\supseteq\dots\supseteq \mathcal{G}_L$ precisely achieves this feat.


\begin{example}[Regularizer]
We consider the case of two univariate lag (at most) three autoregressive processes; that is, $M=2$ and $L=3$. 
The corresponding groups are the following:
\begin{align*}
    {\mathcal G}_1&=\{(1,1), (1,2), (1,3), (2,1), (2,2), (2,3)\}\;;\\
    {\mathcal G}_2&=\{(1,2), (1,3), (2,2), (2,3)\}\;;\\
    {\mathcal G}_3&=\{(1,3), (2,3)\}\,.
\end{align*}
Thus, the group norm of $\boldsymbol{\beta}\in \mathbb R^{2\times 3}$ is
\begin{align*}|\!|\boldsymbol{\beta} |\!|_{\mathcal G}&=\sqrt6\sqrt{(\beta^1_1)^2+(\beta^2_1)^2+(\beta^1_2)^2+(\beta^2_2)^2+(\beta^1_3)^2+(\beta^2_3)^2}\\ &\hspace{2.5cm}+~\sqrt4\sqrt{(\beta^1_2)^2+(\beta^2_2)^2+(\beta^1_3)^2+(\beta^2_3)^2}\\ &\hspace{5cm}+~\sqrt2\sqrt{(\beta^1_3)^2+(\beta^2_3)^2}\,.\end{align*} 
Notice that, when the (regularized) LASSO sets a certain group (say the second group above) to~$\boldsymbol{0}$, it automatically sets all the following groups to $\boldsymbol{0}$ as well. More specifically, when the (regularized) LASSO sets $\boldsymbol{\beta}_{{\mathcal G}_l}\boldsymbol{0}$, then the hierarchical structure ${\mathcal G}_l\supseteq {\mathcal G}_{l+1}\supseteq \cdots$ means that for all $r>0$, each coordinate of $\boldsymbol{\beta}_{{\mathcal G}_{l+r}}$ comes as a coordinate of $\boldsymbol{\beta}_{{\mathcal G}_{l}}$, thus ensuring $\boldsymbol{\beta}_{{\mathcal G}_{l+r}}=\boldsymbol{0}$ for each $r\geq 0$.
\end{example}

In what follows, we use the following notations. For 
$m\in [M]$ and $l\in [L]$, and $t\leq T_m$, we 
write 
\begin{equation}
\label{notn11}
\begin{aligned}
\boldsymbol{U}^{(m)}&\deq(U^m_1,\cdots,
U^m_{T_m})^\top\,;\\  X^{(m,l)}&\deq(X^m_{1-l},\dots,
X^m_{T_m-l})^\top\,;\\  X_t^{(m)}&\deq(X^m_{t-1},\dots,
X^m_{t-L})\,;\\  X^{(m)}&\deq(X^{(m,1)},\dots,
X^{(m,L)})\,.
\end{aligned}
\end{equation}
Moreover, we will write $X^{(m)}_{< j}$ to denote 
any of the variables $X^m_{j'}$ for $j'<j$.

\subsection{Estimation Pipeline}
Learning autoregressive coefficients of multiple time series of potentially different lengths and identical true lag is more complex than just the usual group-LASSO problem, where the groups form a partition of the index set. 
From a methodological perspective, some immediate technical challenges are 1. deciding on what $L$ should be taken in the formulation of the convex problem \eqref{eq:p1}, 2. how to disentangle the dual of the group norm (in order to apply H\"older's inequality to derive oracle prediction guarantees as in subsection \ref{pred-error} below); from a practical perspective, the challenge lies in incorporating the varying number of samples into the convex problem.

We now give a high-level overview of our estimation pipeline (described below) that takes as input the multiple time series, and forms the appropriate convex problem in the form of \eqref{eq:1}, and solves this convex problem via stochastic proximal gradient method as in \cite{MR4209452}. In essence, the idea is to start looking into the data set to first find the samples corresponding to the component which has the minimum number of samples (breaking ties arbitrarily). We then use this component to find the initial input lag, $L$, as described in \eqref{L-bound}. In the next step, we solve the regularized least-squares problem in \eqref{eq:p1} --- where the penalty function is the group norm $\mathcal N$, discussed further below (see \eqref{Neq}). The rate of convergence of the procedure is quadratic in number of computational steps as discussed in \cite{MR4209452}.

\begin{algorithm} 
   \label{alg:learnar}

  \DontPrintSemicolon
  \SetKwInOut{Input}{Input}\SetKwInOut{Output}{Output}
  \Input{samples from the component stable processes, confidence parameters $A\geq 1$ and $\delta>0$, stability parameter $\epsilon\in (0,1)$}
  \Output{estimated lag $\hat{L}_0$, and estimated autoregression coefficient vector $\tilde{\boldsymbol{\beta}}$}
\begin{algorithmic}
\item[1.] Let $n_{\min}$ be the minimum number of samples from the components. Solve for $L$ (see equation \eqref{L-bound}): $$
    n_{\min}=L+84Ae\zeta^{-2}L\log\left(\frac{ML}\delta\right)\,.$$\;
    \item[2.] Use the learning algorithm in \cite{MR4209452} as subroutine to solve the convex problem in \eqref{eq:1} --- with $L$ as above; call the output 
    $\hat{\boldsymbol{\beta}}$.
    \item[3.] If ${\boldsymbol{\beta}}_m={\boldsymbol{\beta}}_{m'}$ for all $m,m'\in [M]$ (equivalently, the component time series are from identical AR process), return $\hat{L}_0\deq\max\{j:|(\hat{\boldsymbol{\beta}}_1)_j|>\lambda\}$ and $\tilde{\boldsymbol{\beta}}\deq(\hat{\boldsymbol{\beta}}'_0,\cdots,\hat{\boldsymbol{\beta}}'_0)$, where (refer Equation \eqref{cut-out} for notation) $$\hat{\boldsymbol{\beta}}_0'\deq\frac 1M\sum_{m=1}^M\hat{\boldsymbol{\beta}}_m(\hat{L}_0)\,;$$ 
          else, return \begin{align*}\hat{L}_0&\deq\max\{j:|\hat{\boldsymbol{\beta}}_j^m|>\lambda~\mbox{for some}~m\in [M]\}\,,\\ \text{and}\hspace{1cm} \tilde{\boldsymbol{\beta}}&\deq (\hat{\boldsymbol{\beta}}_1(\hat{L}_0),\cdots,\hat{\boldsymbol{\beta}}_m(\hat{L}_0))\,.\end{align*}

\end{algorithmic}
\caption{AR Coefficient Estimation Pipeline}
\end{algorithm}

\newpage
The following is the main theorem on the theoretical properties of the output of the estimation pipeline. This theorem is essentially a summary of the results contained in Section \ref{sec:7}, and will be discussed in all detail in subsections \ref{rip1} and \ref{stasta}, as indicated below.

\begin{theorem}[Main Theorem]\label{algorithm-thm}
    Let $\hat{\boldsymbol{\beta}}$ be the output of the group-LASSO in \eqref{eq:1} above, with \begin{equation*}\lambda=
{24(84Ae)^{\frac 12}\zeta^{-1}\sigma_{\max}^2}
{C_\sharp}^{\frac 32}(1+\epsilon^{-2}+\epsilon^{-4})
\sqrt{\frac{L\log\left(\frac{ML}\delta\right)}{DM}}\,,\end{equation*} where $\zeta=6^{-3}\epsilon^4$, and $L\geq L_0$ satisfying \begin{equation*}
    n_{\min}=L+84Ae\zeta^{-2}L\log\left(\frac{ML}\delta\right)\,.
\end{equation*}
Suppose that the $\beta$-min condition \eqref{beta-min1} holds with $c_\beta=\lambda$. If the total number $D$ of post-samples satisfies \begin{equation*}D\geq 
3^9\cdot (84Ae)C_\sharp^5(1+\epsilon^{-2}+\epsilon^{-4})^2\left(\frac{\sigma_{\max}}{\sigma_{\min}}\right)^4(\epsilon^3\zeta)^{-2}M^aL_0^2L^3
\log\left(\frac{ML}\delta\right)\log (2L)\,,\end{equation*} and if $T_{\min}\geq 84eA\zeta^{-2}L_0\log L$, then the following holds with high probability: 
\begin{enumerate}
\item the estimation error is given by  \begin{equation*}|\!|\hat{\boldsymbol{\beta}}-\boldsymbol{\beta}|\!|_2\leq 
\frac{81(84Ae)^{\frac 12}LL_0\sigma_{\max}^2C_{\sharp}^{\frac 32}(1+\epsilon^{-2}+\epsilon^{-4})}{\zeta\alpha\epsilon^2}\sqrt{\frac{\log\left(\frac{ML}{\delta}\right)}D}\,;\end{equation*} 

\item if $S_\lambda\deq\{j:|\hat{\boldsymbol{\beta}}_j|>\lambda\}$, then the false discovery is bounded by the following inequality: \begin{equation*}|S_\lambda\setminus \operatorname{supp}({\boldsymbol{\beta}})|~\leq \frac{243(84Ae)^{\frac 12}LL_0^{\frac 32}\sigma_{\max}^2C_{\sharp}^{\frac 32}(1+\epsilon^{-2}+\epsilon^{-4})}{\zeta\alpha\epsilon^2\lambda}\sqrt{\frac{\log\left(\frac{ML}{\delta}\right)}D}\,.\end{equation*}

\item the AR-models --- fitted with coefficients $\hat{\boldsymbol{\beta}}_0$ returned by the Algorithm 
\ref{alg:learnar} ("AR Coefficient Estimation Pipeline").  --- are stable, with high probability.
\end{enumerate}
\end{theorem}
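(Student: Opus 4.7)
The plan is to view the main theorem as an assembly of the three technical pillars already developed in Section~\ref{sec:7}, so the proof itself will be short and essentially a bookkeeping argument, with the real work hidden in the cited statements. Concretely, I would verify that the stated choices of $L$, $\lambda$, and $D$ simultaneously satisfy the hypotheses of Theorem~\ref{lem:main1} (effective-noise bound), Theorem~\ref{lem:2-9-sc4} (oracle inequality), Theorem~\ref{sc-07-11-1} (restricted eigenvalue, via Corollary~\ref{res}), Theorem~\ref{prop:21-10-sc1} ($\ell_2$-estimation error), and Theorem~\ref{sc1-07-11:2} (stability). This is where I expect the tightest coupling: each sub-theorem imposes its own sample-complexity threshold involving the ratios $\epsilon^{-1}$, $\sigma_{\max}/\sigma_{\min}$, $C_\sharp$, and the polynomial factor in $L,L_0,M$, and one must check that the single uniform condition on $D$ stated in the theorem dominates all of these. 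The equality $n_{\min}=L+84Ae\zeta^{-2}L\log(ML/\delta)$ is precisely what the proof of Theorem~\ref{sc-07-11-1} demands in order to absorb the initial burn-in, so reading back into that proof supplies the right bound for $L$.

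For part~(1), I would first invoke Theorem~\ref{lem:main1} to ensure that, with the displayed choice of $\lambda$, the event $\{\lambda \geq 2\,\mathcal{N}^{\ast}(X^{\top}\boldsymbol{U}/D)\}$ holds with the claimed high probability; this is the standard LASSO event that makes the oracle inequality usable. Plugging this into Theorem~\ref{lem:2-9-sc4} gives a prediction-error bound of order $\lambda^2 L_0$, and then feeding this together with the restricted-eigenvalue constant $\alpha$ from Corollary~\ref{res} into Theorem~\ref{prop:21-10-sc1} produces the displayed $\ell_2$-bound on $\hat{\boldsymbol{\beta}}-\boldsymbol{\beta}$, with all absolute constants tracked through the chain; this is essentially arithmetic.

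For part~(2), I would combine the $\beta$-min hypothesis with Markov's inequality applied to the coordinates of $\hat{\boldsymbol{\beta}}-\boldsymbol{\beta}$. If $j\in S_\lambda\setminus\operatorname{supp}(\boldsymbol{\beta})$, then $\beta_j=0$ but $|\hat{\boldsymbol{\beta}}_j|>\lambda$, so each such index contributes at least $\lambda^2$ to $\|\hat{\boldsymbol{\beta}}-\boldsymbol{\beta}\|_2^2$; equivalently, each contributes $\lambda$ to the $\ell_1$-error, and using the $\ell_1/\ell_2$ comparison together with sparsity of $\boldsymbol{\beta}$ (of size $\leq ML_0$) yields the $L_0^{3/2}/\lambda$ scaling that appears in the stated bound. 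The matching of numerical constants (e.g.\ the prefactor $243$) is simply $3$ times the constant from part~(1), reflecting the $\sqrt{L_0}$-to-$L_0$ conversion from $\ell_2$ to $\ell_1$ on a set of size $ML_0$.

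Part~(3) is then a direct citation of Theorem~\ref{sc1-07-11:2}, once we know that the estimation error from part~(1) is small enough to guarantee that the reverse characteristic polynomial of the fitted model stays bounded away from zero on $\mathbb{D}$; concretely one checks that the right-hand side of the bound in part~(1) is smaller than the $\epsilon$-stability margin guaranteed for $\boldsymbol{\beta}$, which is exactly what the lower bound on $D$ enforces (this is why the factor $(\epsilon^3\zeta)^{-2}$ appears there). The main obstacle I anticipate is not any single inequality but the consistent propagation of the constants $A, \epsilon, C_\sharp, \sigma_{\max}/\sigma_{\min}, \zeta$ across the five cited results so that the single sample-size condition on $D$ truly covers every hypothesis; once this bookkeeping is done, the proof collapses to a two-line combination: "apply Theorem~\ref{prop:21-10-sc1} for (1) and (2), apply Theorem~\ref{sc1-07-11:2} for (3)."
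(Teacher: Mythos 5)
Your proposal matches the paper's own proof, which is exactly the two-line combination you anticipate: parts (1) and (2) are cited directly from Theorem~\ref{prop:21-10-sc1} and part (3) from Theorem~\ref{sc1-07-11:2}, with the $\beta$-min condition invoked for the thresholding step. The additional bookkeeping you describe (checking that the single condition on $D$ and the choice of $L$ via $n_{\min}$ dominate the hypotheses of the effective-noise, restricted-eigenvalue, and stability results) is precisely what those cited theorems' own statements and proofs carry out, so your route is the same as the paper's.
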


\begin{proof}[Proof of Theorem \ref{algorithm-thm}]
    Subject to the stated $\beta$-min condition \eqref{beta-min1}, this follows immediately from Theorem \eqref{prop:21-10-sc1} in combination with Theorem \eqref{sc1-07-11:2}.
\end{proof}
        In the pipeline above, it is necessary to consider two distinct stability parameters $A\geq 1$ and $\delta$, as it is not possible to integrate them into a single parameter --- due mainly to the different number of samples from the component processes in our set-up.
    Also, we separately mention the two cases (of $\boldsymbol{\beta}_m$'s being identical (or not) for all $m$) in order to specifically emphasize that in the first case, our algorithm requires a smaller number of samples  than in the later case (which allows savings of a factor of $M$ in the sample complexity).
    

Decent algorithms like the proximal gradient method used in the subroutine above might produce small non-zero valued parameters as numerical artifacts.
One could consider other types of algorithms instead,
but the required number of computational steps could be much higher.
Moreover, those artifacts, and statistical false positives more generally, can also be controlled by standard $\lambda$-thresholding as mentioned in the algorithm.

\section{Statistical Guarantees}\label{sec:7}
This section contains the main theoretical results of this paper. 
We begin with deriving bounds for the one-step ahead prediction error, first formulated by an oracle inequality (see Theorem~\ref{lem:2-9-sc4}), which depends on the tuning parameter $\lambda$ of our least-squares penalisation approach. Then we control this tuning parameter by controlling the effective noise of our Lasso-optimisation problem. Both things together will finally yield more explicit rates
of our one-step ahead prediction error.
The second part of this section treats control of the estimated autoregressive coefficients, including control of false discovery for their support (Theorem~\ref{prop:21-10-sc1}). In the end we present our result on stability of the estimated AR-model (Theorem~\ref{sc1-07-11:2}).

To start with, we briefly recall the notion of the dual norm of our group-LASSO norm $\mathcal N$ defined above.


Our underlying space is $\mathbb R^{ML}$. Observe 
that $$\boldsymbol{\beta}\mapsto {\mathcal N}
(\boldsymbol{\beta})\deq \sum\sqrt{|{\mathcal G_l}|}
\cdot|\!|\boldsymbol{\beta}_{{\mathcal G}_l}|\!|_2$$ is a norm 
for any ${\mathcal G}=\{{\mathcal G}_1,\cdots,{\mathcal G}_l\}$ that covers $[ML]$. 
However, this norm is singular at any point 
where all the coordinates in a group vanish. Moreover, all the coordinates in all of the smaller-sized groups will vanish, which is important 
in our analysis, since we will basically care about sparse solutions. Thus, we can 
not appeal to differential techniques to get bounds on the norm, but rather need to appeal to the dual norm approach.

\begin{definition}[Dual norm]\label{dual_def}
For a norm ${\mathcal N}$ on $\mathbb R^{ML}$, the dual norm ${\mathcal N}_\star(\boldsymbol{\alpha})$ of 
$\boldsymbol{\alpha}\in\mathbb R^{ML}$ is the 
optimum solution of the following convex program: \begin{eqnarray*}\mbox{maximize}&&
\langle \boldsymbol{\alpha},\boldsymbol{\beta}\rangle\\
\mbox{subject to}&& \mathcal N(\boldsymbol{\beta})\leq 1\,.\end{eqnarray*} 
\end{definition}

The dual norm is used to encapsulate the effective noise of LASSO. More explicitly, the dual norm shows up in the form ${\mathcal N}_\star(X^\top\boldsymbol{U})$, called the \emph{effective noise} of the LASSO in equation \eqref{eq:1}. Recall that the effective noise vector $X^\top\boldsymbol{U}$ can be thought of as the ``projection'' of the noise vector $\boldsymbol{U}$ on the column space of $X$, and thus, ${\mathcal N}_\star(X^\top\boldsymbol{U})$ is a measure of the ``true'' noise present in the data.

In Appendix \ref{Dua_norm}, we obtain a generic bound on the dual norm, which will be used to obtain the statistical guarantees of this paper.




\subsection{\bf{Prediction Error}}\label{pred-error}

Here we now give the announced theoretical results on non-asymptotic bounds for the one-step ahead prediction error, first formulated by an oracle inequality (see Theorem~\ref{lem:2-9-sc4}), then in the following subsection including control of the  tuning parameter $\lambda$ by control of the effective noise of our Lasso-optimisation problem. This enables us to formulate concrete rates of the prediction error. Note that this approach delivers an explicit way of how to select $L$ (via equation \eqref{tmin}, and subsequently \eqref{L-bound}) the input parameter for Step 2 of Algorithm 
\ref{alg:learnar} ("AR Coefficient Estimation Pipeline"). 

\subsubsection{Oracle Prediction Error
}
The problem~(\ref{eq:1}) has a solution because, given any specific 
realization of the time series (thus, effectively, fixing $\boldsymbol{y}$ 
and $X$) and any $\lambda>0$, the convex function
\begin{eqnarray}
\label{eq:2}f_\lambda\left(\boldsymbol{\beta}\right) =\frac 1D |\!|\boldsymbol{y}-
X\boldsymbol{\beta}|\!|_2^2+\lambda
|\!|\boldsymbol{\beta}|\!|_{\mathcal{G}}\end{eqnarray} is continuous, 
and $$\lim_{|\!|\boldsymbol{\beta}|\!|_2\rightarrow\infty}f_\lambda
(\boldsymbol{\beta})=\infty\ .$$ 
This also shows that we can consider this as a convex program on a compact domain, because 
we should (at least in theory) be able to restrict the domain of minimization to be an 
$\ell^2$-ball of suitable radius, say $R_{r,X,\boldsymbol{y}}\in 
(0,\infty)$. Let $\hat{\boldsymbol{\beta}}$ be a solution of this convex program.

Now write the time series observations in the matrix form as 
$\mathcal X$. 
We are interested in an estimate of the 'risk', equivalently, the in-sample, 
one-step-ahead mean squared forecast error $\mathbb E[|\!|
\boldsymbol{y}-X\hat{\boldsymbol{\beta}}|\!|_2^2/D\mid \mathcal X]$. In the derivations below, we follow a well-known approach for its control, 
as appeared (for example) in \citep[Chapter~6]{MR4385517}. 
We defer the proof to Appendix \ref{lem:2-9-sc4:proof}.
\begin{theorem}[Prediction Guarantee]\label{lem:2-9-sc4}
Suppose that $\lambda\geq  \frac 2D\mathcal N_\star(X^\top\boldsymbol{U})$, where $\mathcal N(\boldsymbol{\alpha})=
\sum_{l=1}^L\sqrt{|\mathcal G_l|}\cdot
|\!|\boldsymbol{\alpha}_{(\geq l)}|\!|_2$ as in Proposition~\ref{lem:2-9-sc2}. 
Write  $\overline{\sigma}^2=D^{-1}(T_1\sigma_1^2+
\cdots+T_M\sigma_M^2)$; then \begin{eqnarray*}\frac 1D\mathbb E\left[|\!|\boldsymbol{y}-X\hat{\boldsymbol{\beta}}|\!|_2^2
\mid \mathcal X\right]
&\leq& \overline{\sigma}^2
+\min_{\alpha\in \mathbb R^{ML}}\left(\frac 1D|\!|X(\boldsymbol{\beta}-
\boldsymbol{\alpha})|\!|_2^2+2\lambda\mathcal N(\boldsymbol{\alpha})\right),
\end{eqnarray*} In particular, the following inequality holds: 
\begin{eqnarray*}\mathbb E\left[|\!|\boldsymbol{y}-X\hat{\boldsymbol{\beta}}|\!|_2^2
\mid \mathcal X\right]-\mathbb E\left[|\!|\boldsymbol{y}-X{\boldsymbol{\beta}}|\!|_2^2
\mid \mathcal X\right]
&\leq& \min_{\alpha\in \mathbb R^{ML}}\left(|\!|X(\boldsymbol{\beta}-
\boldsymbol{\alpha})|\!|_2^2+2D\lambda\mathcal N(\boldsymbol{\alpha})\right)
\,.\end{eqnarray*} If, moreover, $\lambda\geq \frac 4D\mathcal N_\star(X^\top\boldsymbol{U})$, then 
\begin{align}\label{eq:16-08-sc2}\frac 1D
\mathbb E\left[|\!|\boldsymbol{y}-X\hat{\boldsymbol{\beta}}|\!|_2^2
\mid \mathcal X\right]&\leq \overline{\sigma}^2
+\frac{\lambda}2\min\left\{3\mathcal N(\boldsymbol{\beta}
-\hat{\boldsymbol{\beta}}), 3\mathcal N(\boldsymbol{\beta})
-\mathcal N(\hat{\boldsymbol{\beta}})\right\}.\end{align} 
Consequently, 
\begin{align}\label{lem:18-10-sc5}
\frac 1D\mathbb E\left[|\!|\boldsymbol{y}-X\hat{\boldsymbol{\beta}}|\!|_2^2
\mid \mathcal X\right]\leq \overline{\sigma}^2+2\lambda\sum_{\ell=1}^{L_0}\sqrt{|{\mathcal G}_\ell|}\cdot
|\!|\boldsymbol{\beta}-\boldsymbol{\hat\beta}|\!|_{{\mathcal G}_\ell}\ .
\end{align}
\end{theorem}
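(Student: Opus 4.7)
The plan is to follow the classical basic-inequality analysis for penalised least-squares, where the generalisation to the group-LASSO norm $\mathcal N$ is handled by the primal/dual duality $|\langle \boldsymbol{v},\boldsymbol{w}\rangle|\leq \mathcal N_\star(\boldsymbol{v})\mathcal N(\boldsymbol{w})$. First I would exploit the optimality of $\hat{\boldsymbol{\beta}}$ against an arbitrary $\boldsymbol{\alpha}\in\mathbb R^{ML}$, substitute $\boldsymbol{y}=X\boldsymbol{\beta}+\boldsymbol{U}$, cancel the common $|\!|\boldsymbol{U}|\!|_2^2/D$ terms on both sides, and rearrange to obtain the basic inequality
\begin{equation*}
\frac 1D|\!|X(\boldsymbol{\beta}-\hat{\boldsymbol{\beta}})|\!|_2^2 + \lambda\mathcal N(\hat{\boldsymbol{\beta}}) \leq \frac 1D|\!|X(\boldsymbol{\beta}-\boldsymbol{\alpha})|\!|_2^2 + \frac 2D\langle X^\top\boldsymbol{U},\hat{\boldsymbol{\beta}}-\boldsymbol{\alpha}\rangle + \lambda\mathcal N(\boldsymbol{\alpha}).
\end{equation*}
Applying the dual-norm inequality to the inner product, invoking the hypothesis $\lambda\geq 2\mathcal N_\star(X^\top\boldsymbol{U})/D$, and using the triangle inequality $\mathcal N(\hat{\boldsymbol{\beta}}-\boldsymbol{\alpha})\leq\mathcal N(\hat{\boldsymbol{\beta}})+\mathcal N(\boldsymbol{\alpha})$ cancels the $\lambda\mathcal N(\hat{\boldsymbol{\beta}})$ term and produces the deterministic oracle bound $\frac 1D|\!|X(\boldsymbol{\beta}-\hat{\boldsymbol{\beta}})|\!|_2^2\leq \frac 1D|\!|X(\boldsymbol{\beta}-\boldsymbol{\alpha})|\!|_2^2+2\lambda\mathcal N(\boldsymbol{\alpha})$. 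Taking the minimum over $\boldsymbol{\alpha}$ and passing to the conditional expectation $\mathbb E[\,\cdot\mid\mathcal X]$, interpreted with $\boldsymbol{y}$ viewed as a fresh copy of the response driven by an independent innovation given the $\mathcal X$-measurable $\hat{\boldsymbol{\beta}}$, the baseline noise contributes $\mathbb E[|\!|\boldsymbol{U}|\!|_2^2\mid\mathcal X]/D=\overline{\sigma}^2$ and the cross term drops out by independence, yielding the first displayed inequality; subtracting this $\overline{\sigma}^2$ baseline from both sides then produces the excess-risk version.

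For the refined bound \eqref{eq:16-08-sc2}, I would strengthen the hypothesis to $\lambda\geq 4\mathcal N_\star(X^\top\boldsymbol{U})/D$ and specialise the basic inequality to $\boldsymbol{\alpha}=\boldsymbol{\beta}$, which gives $\frac 1D|\!|X(\boldsymbol{\beta}-\hat{\boldsymbol{\beta}})|\!|_2^2+\lambda\mathcal N(\hat{\boldsymbol{\beta}})\leq \frac{\lambda}{2}\mathcal N(\hat{\boldsymbol{\beta}}-\boldsymbol{\beta})+\lambda\mathcal N(\boldsymbol{\beta})$. The two branches of the min are then the two natural triangle inequalities: the reverse form $\mathcal N(\boldsymbol{\beta})-\mathcal N(\hat{\boldsymbol{\beta}})\leq\mathcal N(\boldsymbol{\beta}-\hat{\boldsymbol{\beta}})$ delivers the $\frac 32\lambda\mathcal N(\boldsymbol{\beta}-\hat{\boldsymbol{\beta}})$ branch, while the forward form $\mathcal N(\hat{\boldsymbol{\beta}}-\boldsymbol{\beta})\leq\mathcal N(\hat{\boldsymbol{\beta}})+\mathcal N(\boldsymbol{\beta})$ delivers the $\frac{\lambda}{2}(3\mathcal N(\boldsymbol{\beta})-\mathcal N(\hat{\boldsymbol{\beta}}))$ branch. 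Finally, for the consequence \eqref{lem:18-10-sc5} the crucial sparsity observation is that the true lag equals $L_0$, so $\boldsymbol{\beta}_{\mathcal G_\ell}=\boldsymbol{0}$ whenever $\ell>L_0$. Splitting the group-norm expressions in $\mathcal N(\hat{\boldsymbol{\beta}}-\boldsymbol{\beta})$ and $\mathcal N(\boldsymbol{\beta})-\mathcal N(\hat{\boldsymbol{\beta}})$ according to $\ell\leq L_0$ versus $\ell>L_0$ and recombining, the $\ell>L_0$ tails enter with the negative sign $-\frac{\lambda}{2}\sum_{\ell>L_0}\sqrt{|\mathcal G_\ell|}\,|\!|\hat{\boldsymbol{\beta}}|\!|_{\mathcal G_\ell}$ and can be dropped, leaving $\frac 32\lambda\sum_{\ell=1}^{L_0}\sqrt{|\mathcal G_\ell|}\,|\!|\boldsymbol{\beta}-\hat{\boldsymbol{\beta}}|\!|_{\mathcal G_\ell}$, which is bounded above by the claimed $2\lambda$-coefficient expression.

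The main obstacle I expect is conceptual rather than algebraic: getting the conditional-expectation bookkeeping right in the time-series context (so that $\hat{\boldsymbol{\beta}}$ is $\mathcal X$-measurable while the $\boldsymbol{U}$-contribution is controlled only on the event $\{\lambda\geq 2\mathcal N_\star(X^\top\boldsymbol{U})/D\}$), and handling the nested-group truncation at $\ell=L_0$ in the final step—the two triangle-inequality applications must be chosen so that the positive $\ell\leq L_0$ contributions collect to the stated leading constant while the negative $\ell>L_0$ tails can be harmlessly discarded.
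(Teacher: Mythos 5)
Your proposal is correct and follows essentially the same route as the paper's own proof in Appendix~\ref{lem:2-9-sc4:proof}: the basic inequality from optimality of $\hat{\boldsymbol{\beta}}$ against an arbitrary $\boldsymbol{\alpha}$, the dual-norm/H\"older step combined with the triangle inequality under $\lambda\geq \frac 2D\mathcal N_\star(X^\top\boldsymbol{U})$, the bias--variance expansion of the conditional risk giving the $\overline{\sigma}^2$ baseline, the specialisation to $\boldsymbol{\alpha}=\boldsymbol{\beta}$ under the strengthened condition $\lambda\geq\frac 4D\mathcal N_\star(X^\top\boldsymbol{U})$ with the two triangle-inequality branches of the minimum, and the split of the group norms at $\ell=L_0$ whose negative $\ell>L_0$ tail is discarded to reach the $\tfrac 32\lambda\leq 2\lambda$ coefficient. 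No gaps; the conditional-expectation bookkeeping you flag is handled in the paper in exactly the way you describe.
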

\noindent
This result is in the form of standard oracle inequalities in high-dimensional statistics \citep[Chapter~6]{MR4385517}.
It shows that the estimator minimizes the one-step-ahead mean-squared forecast risk up to a complexity term that is linear in the tuning parameter and the model complexity.
Hence, the above bound yields an upper bound for the rate convergence once we can control the tuning parameter $\lambda$ via an upper bound on the effective noise.

Thanks to Theorem~\ref{lem:2-9-sc4} and Proposition~\ref{lem:2-9-sc2}, in order to find the smallest tuning parameter $\lambda$ fulfilling $\lambda\geq 4\mathcal N_\star(X^\top\boldsymbol{U})/D$
it suffices to derive a high-probability 
upper-bound on $D^{-1}L^{-\frac 12}|\!|X^\top\boldsymbol{U}|\!|_\infty$ (which is precisely the bound on the dual norm of $X^\top U$).



\subsubsection{Control of the Effective Noise for bounding the tuning parameter 
}
We can prove the following high-probability 
bound on the tuning parameter (equivalently, on the effective noise). 
Again, its proof appears in 
Appendix \ref{lem:main1:proof}. Note that this is a major 
inequality, and while the arguments are well-known, we have applied 
those arguments to the case where the sparsity enforcing 
regularizer is induced by the overlapping group norm.  



\begin{theorem}[Bound on the Effective Noise]\label{lem:main1}
Let $C_\sharp\deq T_{\max}/T_{\min}$. 
For any $\eta>0$ satisfying \begin{equation}\label{eta1}\eta \geq \frac{8C_\sharp \sigma^2_{\max}(1+\epsilon^{-2}+\epsilon^{-4})
}{M}
\,,\end{equation} 
and any $\delta>0$, if $D=T_1+\cdots+T_m$ satisfies 
\begin{equation}\label{eq:D1}D\geq \frac{8
\sigma_{\max}^2(1+\epsilon^{-2}+\epsilon^{-4})}{c_0\eta}\log\left(\frac{ML}{\delta}\right)\,,\end{equation}
then the following inequality holds: \begin{equation}\label{thm:blah}\mathbb P
\left[\frac 2D{\mathcal N}_\star(X
^\top\boldsymbol{U})
\geq \frac{3\eta}{2\sqrt L}\right]~\leq~ \delta\,.\end{equation} Here $c_0>0$ is the 
absolute constant from the Gaussian concentration inequality in proposition \ref{cor:25-10-sc20}.



\end{theorem}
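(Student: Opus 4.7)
\textbf{Proof plan for Theorem \ref{lem:main1}.} The strategy is to reduce the dual-norm bound to a coordinatewise bound on $X^\top\boldsymbol{U}$ via Proposition \ref{lem:2-9-sc2}, and then control each coordinate by combining Gaussian tail bounds (conditional on the regressors) with a concentration inequality for the quadratic forms $\|X^{(m,l)}\|_2^2$ that enter the conditional variance. In detail: by the general inequality for $\mathcal{N}_\star$ established in the appendix (Proposition \ref{lem:2-9-sc2}), it is enough to show that $D^{-1}L^{-1/2}\|X^\top\boldsymbol{U}\|_\infty$ is at most $3\eta/(8L)$ (up to the explicit absolute constants) with probability at least $1-\delta$. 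The key observation is that each coordinate
\[
\bigl(X^\top\boldsymbol{U}\bigr)_{m,l}~=~\sum_{t=1}^{T_m}X^m_{t-l}\,U^m_t
\]
involves only lagged regressors $X^m_{t-l}$ with $l\geq 1$, so $U^m_t$ is independent of the corresponding term under the natural filtration. Consequently, conditionally on $\mathcal{X}$, the sum is a centered Gaussian with variance $\sigma_m^2\|X^{(m,l)}\|_2^2$, so a standard $\chi^2$/Gaussian tail bound controls the conditional deviation in terms of $\|X^{(m,l)}\|_2^2$.

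First I would therefore state and prove that, for each $m\in[M]$ and $l\in[L]$,
\[
\mathbb{P}\Bigl(|(X^\top\boldsymbol{U})_{m,l}|\geq \tau \,\big|\,\mathcal{X}\Bigr)\leq 2\exp\!\bigl(-\tau^2/(2\sigma_m^2\|X^{(m,l)}\|_2^2)\bigr).
\]
A union bound over the $ML$ coordinates contributes the $\log(ML/\delta)$ factor that is visible in the sample-size assumption \eqref{eq:D1}. Then I would control $\|X^{(m,l)}\|_2^2$ by two ingredients: first, the classical identity $\mathbb{E}[(X^m_t)^2]=\frac{1}{2\pi}\int_{-\pi}^\pi \sigma_m^2\,|\mathcal{A}_{e^{\mathrm{i}\omega}}|^{-2}\mathrm{d}\omega$, which under $\epsilon$-stability is bounded by $\sigma_m^2(1+\epsilon^{-2}+\epsilon^{-4})$ (this is precisely where the spectral factor in \eqref{eta1} enters); second, the concentration inequality for Gaussian quadratic forms invoked in Proposition \ref{cor:25-10-sc20}, which yields $\|X^{(m,l)}\|_2^2\leq 2\,T_m\,\sigma_m^2(1+\epsilon^{-2}+\epsilon^{-4})$ on an event of probability at least $1-\delta/(2ML)$, with the absolute constant $c_0$ appearing in the sample-size condition.

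Combining these two estimates on the event of good regressors, the Gaussian tail bound gives
\[
|(X^\top\boldsymbol{U})_{m,l}|~\lesssim~ \sigma_m\sqrt{T_m\sigma_m^2(1+\epsilon^{-2}+\epsilon^{-4})\log(ML/\delta)}.
\]
Taking the maximum over $(m,l)$ and using $T_m\leq T_{\max}=C_\sharp T_{\min}\leq C_\sharp\,D/M$ (so that $T_m/(DM)\leq C_\sharp/M^2$ appears, matching the $C_\sharp/M$ in \eqref{eta1} after taking a square root), I would recover the claimed bound $\tfrac{2}{D}\mathcal{N}_\star(X^\top\boldsymbol{U})\leq \tfrac{3\eta}{2\sqrt{L}}$, with the factors of $8$ and $3/2$ produced by folding the two Gaussian-type bounds together.

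The principal obstacle is the second step, the uniform control of the empirical second moments $\|X^{(m,l)}\|_2^2$ in the presence of serial correlation and of sample sizes $T_m$ that differ across components. The $\epsilon$-stability hypothesis is exactly what allows the spectral-density bound $\sigma_m^2(1+\epsilon^{-2}+\epsilon^{-4})$, but turning a pointwise moment bound into a high-probability uniform bound on $\|X^{(m,l)}\|_2^2$ requires a genuine quadratic-form concentration inequality for stationary Gaussian AR sequences, and correctly aligning the resulting logarithmic factor with the union-bound contribution is what produces the explicit sample-size condition \eqref{eq:D1} in terms of $c_0$. The ratio $C_\sharp$ intervenes here precisely because the bound must be uniform over $m$ whose associated sample counts $T_m$ can be as large as $T_{\max}$; this forces the factor $C_\sharp$ in \eqref{eta1} and is the reason the two stability parameters $A$ and $\delta$ must be kept separate in Algorithm \ref{alg:learnar}.
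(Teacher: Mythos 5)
Your opening reduction is the same as the paper's: Proposition~\ref{lem:2-9-sc2} converts the dual-norm bound into a coordinatewise bound on $\|X^\top\boldsymbol{U}\|_\infty$, and a union bound over the $ML$ coordinates produces the $\log(ML/\delta)$ factor. The gap is in your central step. You claim that, conditionally on $\mathcal{X}$, the coordinate $\sum_{t=1}^{T_m}X^m_{t-l}U^m_t$ is a centered Gaussian with variance $\sigma_m^2\|X^{(m,l)}\|_2^2$, "so $U^m_t$ is independent of the corresponding term under the natural filtration." The filtration remark is true term by term, but it does not deliver the conditional statement: the regressors $X^m_{t-l}$ are themselves functions of past innovations, so the design matrix is not independent of the noise vector $\boldsymbol{U}^{(m)}$. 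Worse, in an AR model the design matrix (together with the true coefficients) essentially \emph{determines} $U^m_t$ for every $t\leq T_m-1$, since $U^m_t=X^m_t-\sum_l\beta^m_lX^m_{t-l}$ and all of these values appear among the entries of $X$. Conditionally on $\mathcal{X}$ the noise is therefore almost entirely degenerate, and the displayed conditional tail bound $\mathbb{P}(|(X^\top\boldsymbol{U})_{m,l}|\geq\tau\mid\mathcal{X})\leq 2\exp(-\tau^2/(2\sigma_m^2\|X^{(m,l)}\|_2^2))$ is false. Everything downstream of it --- the splitting into a conditional Gaussian tail plus a concentration bound on $\|X^{(m,l)}\|_2^2$ --- rests on this step.

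The paper avoids the issue entirely by never conditioning: it writes the cross term via the polarization identity
\begin{equation*}
2\langle \boldsymbol{U}^{(m)},X^{(m,l)}\rangle=\|\boldsymbol{U}^{(m)}+X^{(m,l)}\|_2^2-\|\boldsymbol{U}^{(m)}\|_2^2-\|X^{(m,l)}\|_2^2
\end{equation*}
and applies the Hanson--Wright-type inequality of Proposition~\ref{cor:25-10-sc20} to each of the three quadratic forms of jointly Gaussian vectors, with the operator norms of the relevant covariance matrices controlled by Lemmas~\ref{lem:06-09-sc1} and~\ref{lem:06-09-sc2}. In particular the factor $(1+\epsilon^{-2}+\epsilon^{-4})$ arises as the bound $\|\tilde{\Gamma}^{(m)}\|_{\text{op}}\leq\sigma_m^2(1+\epsilon^{-2}+\epsilon^{-4})$ on the covariance of $\boldsymbol{U}^{(m)}+X^{(m,l)}$, not from a bound on $\mathbb{E}[(X^m_t)^2]$ as you suggest; and the hypothesis \eqref{eta1} is used to force the $\min\{1,\cdot\}$ in the Hanson--Wright exponent to equal one, linearizing the tail, rather than to bound a conditional variance. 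A correct repair of your route would replace the conditional-Gaussian claim with a self-normalized martingale concentration bound for $\sum_t X^m_{t-l}U^m_t/\|X^{(m,l)}\|_2$, but that is a genuinely different tool and would not reproduce the constants in the statement; as written, the argument does not go through.
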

\noindent
The interpretation of the above result is that for the LASSO oracle inequality (Theorem~\ref{lem:2-9-sc4}) to hold with high probability, it is sufficient to choose $\lambda$ to be just as large as $(3\eta)/(2\sqrt L)$, but it is not necessary to take it larger. Indeed, equation \eqref{thm:blah} shows that the probability that $2/D\mathcal N_\star(X^TU)$ is larger than $3\eta/2\sqrt L$ is small; thus, we may assume $2/D\mathcal N_\star(X^TU)<3\eta/2\sqrt L$, to hold with probability $1-\delta$.


The above result bounds the tails of the effective noise.
For any 
$A\geq 1$, we will now set  \begin{equation*}
\eta=~C_0\sqrt{A}C_\epsilon C_\sharp^{\frac 32}\sigma_{\max}^2\sqrt{\frac{L\log\left(\frac{ML}{\delta}\right)}{DM}}\,,
\end{equation*} for some absolute constant $C_0>0$ and parameter $C_\epsilon>0$ that depends only on the stability parameter $\epsilon$. 
Henceforth, we take \begin{equation}\label{etavalue}
\eta\deq 8(84Ae)^{\frac 12}\zeta^{-1}(1+\epsilon^{-2}+\epsilon^{-4})C_\sharp^{\frac 32}
\sigma_{\max}^2
\sqrt{\frac{L\log\left(\frac
{ML}{\delta}
\right)}
{DM
}}\,,\end{equation} which is obtained by plugging-in values of $C_0$ and $C_\epsilon$ --- as in the proof of Lemma \ref{lemma4} below --- into Equation \eqref{etavalue}; we do not attempt to optimize these constants.
\\ 


\begin{lemma}[Data-dependent selection of $L$
]\label{lemma4}
There is an absolute constant $C\in(0,\infty)$ and a parameter $C_\epsilon>0$ that depends only on the stability parameter $\epsilon>0$ such that if $\eta$ is as in \eqref{etavalue} 
and \begin{equation}\label{tmin}T_{\min}\leq 84Ae\zeta^{-2}L\log\left(\frac{ML}\delta\right)\,,\end{equation} then the inequality \eqref{eta1} holds. 
\end{lemma}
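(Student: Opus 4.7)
}
The plan is a direct algebraic verification: substitute the prescribed value \eqref{etavalue} of $\eta$ into the target inequality \eqref{eta1}, cancel the common factors $\sigma_{\max}^2$ and $(1+\epsilon^{-2}+\epsilon^{-4})$ that appear on both sides, square the resulting inequality (both sides are positive), and then use the hypothesis \eqref{tmin} together with the definition of $C_\sharp$ to bound the total sample size $D$ from above.

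More concretely, the first step would be to rewrite \eqref{eta1} with $\eta$ replaced by the expression in \eqref{etavalue}. After dividing through by $8\sigma_{\max}^2(1+\epsilon^{-2}+\epsilon^{-4})$, the inequality reduces to
\begin{equation*}
(84Ae)^{1/2}\zeta^{-1}C_\sharp^{3/2}\sqrt{\frac{L\log(ML/\delta)}{DM}}~\geq~\frac{C_\sharp}{M},
\end{equation*}
which upon squaring becomes equivalent to
\begin{equation*}
D~\leq~84Ae\,\zeta^{-2}\,C_\sharp\, M\, L\log\!\left(\frac{ML}{\delta}\right).
\end{equation*}
So the lemma is reduced to verifying this upper bound on $D$. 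The identification of the constants in the statement is then $C = 8(84e)^{1/2}$ (absorbing $\sqrt{A}$ into the explicit $A$-dependence of $\eta$) and $C_\epsilon = \zeta^{-1}(1+\epsilon^{-2}+\epsilon^{-4})$, which depends only on $\epsilon$ since $\zeta = 6^{-3}\epsilon^4$.

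The second step is to observe that, by definition of $C_\sharp = T_{\max}/T_{\min}$,
\begin{equation*}
D~=~T_1+\cdots+T_M~\leq~M\,T_{\max}~=~M\,C_\sharp\, T_{\min}.
\end{equation*}
Plugging in hypothesis \eqref{tmin} — which provides exactly the bound $T_{\min}\leq 84Ae\zeta^{-2}L\log(ML/\delta)$ — yields
\begin{equation*}
D~\leq~M\,C_\sharp\cdot 84Ae\,\zeta^{-2}\,L\log\!\left(\frac{ML}{\delta}\right),
\end{equation*}
which is the required bound on $D$, and the lemma follows by reversing the algebraic manipulations above.

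I do not anticipate a real obstacle here; the statement is essentially a bookkeeping exercise that reconciles the data-driven choice of $L$ in Step~1 of Algorithm~\ref{alg:learnar} (which forces $T_{\min}$ to match the right-hand side of \eqref{tmin}) with the lower bound required in \eqref{eta1}. The only minor care needed is to keep track of the interplay between $T_{\min}$, $T_{\max}$, and $D$ via $C_\sharp$, and to confirm that the $A$-dependent factor $\sqrt{A}$ hidden inside the \emph{absolute} constant $C$ in the lemma statement can be legitimately absorbed (since $A\geq 1$, and the inequality is monotone in $A$).
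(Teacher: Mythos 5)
Your proposal is correct and follows essentially the same route as the paper's own proof: both arguments hinge on the chain $D\leq M\,T_{\max}=M\,C_\sharp T_{\min}$ combined with the hypothesis \eqref{tmin}, the only difference being that you run the equivalences backward from \eqref{eta1} while the paper runs them forward from \eqref{tmin}. Your factorization of the constants ($C=8(84e)^{1/2}$, $C_\epsilon=\zeta^{-1}(1+\epsilon^{-2}+\epsilon^{-4})$) differs superficially from the paper's ($C_0=(12)^3\sqrt{84e}$, $C_\epsilon=\epsilon^{-4}(1+\epsilon^{-2}+\epsilon^{-4})$) but yields the same product since $\zeta^{-1}=216\,\epsilon^{-4}$.
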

\noindent
This results shows that there is a suitable $\eta$ for our theories to hold.
The question of finding such an $\eta$ in practice will need to be discussed in more applied future work.


\begin{proof} 
We set $C_0\deq(12)^3\sqrt{84e}$, and $$C_\epsilon\deq\epsilon^{-4}(1+\epsilon^{-2}+\epsilon^{-4})\,.$$ Note that \begin{equation}\label{Csharp}C_{\sharp}T_{\min}= T_{\max}\geq~\frac DM\,.\end{equation} Now, with $\eta$ as in \eqref{etavalue}, the inequality \eqref{eta1} is ensured 
by the following sequence of inequalities: 

\begin{eqnarray*}
&(84Ae)^{\frac 12}\zeta^{-1}\sqrt{{L\log\left(\frac{ML}{\delta}\right)}}&\geq \sqrt{T_{\min}}\\ \Rightarrow\hspace{0.5cm} &(84Ae)^{\frac 12}\zeta^{-1}C_\sharp^{\frac 12}  \sqrt{{L\log\left(\frac{ML}{\delta}\right)}}&\geq \sqrt{\frac DM}\\ \Rightarrow\hspace{0.5cm} &8(84Ae)^{\frac 12}\zeta^{-1}(1+\epsilon^{-2}+\epsilon^{-4})C_\sharp^{\frac 32}\sigma_{\max}^2\sqrt{\frac{L\log\left(\frac{ML}{\delta}\right)}{DM}}&\geq \frac{8C_\sharp \sigma^2_{\max}(1+\epsilon^{-2}+\epsilon^{-4})}{M}
\,.\end{eqnarray*}
\end{proof}
The inequality \eqref{tmin} provides for the theoretical support of our choice of $L$ in formulating the penalized least squares program in \eqref{eq:1}. 

The explicit nature of the parameters 
in the proof above is crucial here, in order to satisfy both \eqref{eta1} and \eqref{eq:D1} above, as well as remaining compatible with the requirements involving the restricted eigenvalue property (as in Corollary \ref{res}) below. Note that the current LASSO-based literature often ignores combining the requirements coming from standard oracle inequality type result (Theorem \ref{lem:2-9-sc4}) and the restricted eigenvalue type results.

\begin{remark}[On choosing $L$ via equation \eqref{tmin}]
The use of such an upper-bound $L$ conforms with the (by now) well-known restricted isometry property of sub-sampled Gaussian matrices (that is, matrices whose entries are iid Gaussian) in the compressed-sensing literature; for more details, see (for example) \citep[section 1.E]{MR2300700} and the references therein. 
\end{remark}

For $D$ as in \eqref{eq:D1}, this gets us the following bound: 
\begin{eqnarray*}\mathbb P\left[{\mathcal N}_\star
\left(\frac 2DX^\top\boldsymbol{U}\right)> 
{12(84Ae)^{\frac 12}\zeta^{-1}\sigma_{\max}^2}
{C_\sharp}^{\frac 32}(1+\epsilon^{-2}+\epsilon^{-4})
\sqrt{\frac{L\log\left(\frac{ML}\delta\right)}{DM}}\right]&\leq \delta,\end{eqnarray*} In 
view of Theorem \ref{lem:main1}, we correspondingly set \begin{equation}\label{lambda}\lambda=
{24(84Ae)^{\frac 12}\zeta^{-1}\sigma_{\max}^2}
{C_\sharp}^{\frac 32}(1+\epsilon^{-2}+\epsilon^{-4})
\sqrt{\frac{L\log\left(\frac{ML}\delta\right)}{DM}}\end{equation} to force \eqref{eq:16-08-sc2} to be true.



\begin{remark}
From the above, we note that $\lambda$ decreases when any of $M,D$ increases (as the function $x^{-1}\log x\rightarrow 0$ when $x\rightarrow\infty$). This is intuitive since larger $M$ requires that LASSO must not set too many coefficients to 0, and larger the $D$ better the non-regularized estimator is as approximation of the true coefficients.

\end{remark}


Together with Theorem~\ref{lem:2-9-sc4}, the above yields the following bound:
\begin{corollary}[Concrete rates of Prediction Error 
]\label{rates} Suppose that $$\eta=~8(84Ae)^{\frac 12}\zeta^{-1}(1+\epsilon^{-2}+\epsilon^{-4})C_\sharp^{\frac 32}
\sigma_{\max}^2
\sqrt{\frac{L\log\left(\frac
{ML}{\delta}
\right)}
{DM
}}\,,$$ as set in Equation~\eqref{etavalue} above, and \begin{equation*}D\geq \frac{8
\sigma_{\max}^2(1+\epsilon^{-2}+\epsilon^{-4})}{c_0\eta}\log\left(\frac{ML}{\delta}\right)\,.\end{equation*} Then, the following inequality holds with probability at least $1-\delta$: 
    \begin{align*}
        &\mathbb E\left[\frac 1D|\!|\boldsymbol{y}-X\hat{\boldsymbol{\beta}}|\!|_2^2
\mid \mathcal X\right]-\overline{\sigma}^2\\
\leq~&\min_{\alpha\in \mathbb R^{ML}}\left(\frac 1D|\!|X(\boldsymbol{\beta}-
\boldsymbol{\alpha})|\!|_2^2+{\frac{24}{\zeta}(84Ae)^{\frac 12}\sigma_{\max}^2}
{C_\sharp}^{\frac 32}(1+\epsilon^{-2}+\epsilon^{-4})
\sqrt{\frac{L\log\left(\frac{ML}\delta\right)}{DM}}\mathcal N(\boldsymbol{\alpha})\right)\,.
    \end{align*}
\end{corollary}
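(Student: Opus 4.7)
The plan is to combine the high-probability noise bound of Theorem~\ref{lem:main1} with the first oracle inequality of Theorem~\ref{lem:2-9-sc4}, using the calibration of $\eta$ already worked out in Lemma~\ref{lemma4} and the choice of $\lambda$ fixed in~\eqref{lambda}. Since all the heavy probabilistic analysis is already packaged in those earlier results, the proof amounts to verifying that their hypotheses are met by the present choices and then reading off the rate.

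First, I would check the hypotheses of Theorem~\ref{lem:main1}. The assumption that $D\geq \tfrac{8\sigma_{\max}^2(1+\epsilon^{-2}+\epsilon^{-4})}{c_0\eta}\log(ML/\delta)$ in the corollary is literally~\eqref{eq:D1}, while Lemma~\ref{lemma4} guarantees that the choice of $\eta$ in~\eqref{etavalue} satisfies~\eqref{eta1} whenever the data-driven $L$ produced by Step~1 of Algorithm~\ref{alg:learnar} satisfies~\eqref{tmin}, which is the operative regime. With those hypotheses in place, Theorem~\ref{lem:main1} supplies the event
\[
\frac{2}{D}\mathcal{N}_\star(X^\top \boldsymbol{U}) \;\le\; \frac{3\eta}{2\sqrt{L}}
\]
of probability at least $1-\delta$. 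Substituting~\eqref{etavalue} on the right-hand side and comparing with~\eqref{lambda} shows that this upper bound on the effective noise is exactly $\lambda/2$; in particular, on this event $\lambda\geq (2/D)\mathcal{N}_\star(X^\top \boldsymbol{U})$, which is precisely the hypothesis required to trigger the first conclusion of Theorem~\ref{lem:2-9-sc4}.

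Second, on this event I would invoke that first conclusion of Theorem~\ref{lem:2-9-sc4} to obtain
\[
\frac{1}{D}\mathbb{E}\bigl[\|\boldsymbol{y}-X\hat{\boldsymbol{\beta}}\|_2^2 \bigm| \mathcal{X}\bigr] \;\le\; \overline{\sigma}^2 + \min_{\boldsymbol{\alpha}\in\mathbb{R}^{ML}}\!\left(\frac{1}{D}\|X(\boldsymbol{\beta}-\boldsymbol{\alpha})\|_2^2 + 2\lambda\,\mathcal{N}(\boldsymbol{\alpha})\right),
\]
and then substitute the explicit value of $\lambda$ from~\eqref{lambda}: the coefficient $2\lambda$ in front of $\mathcal{N}(\boldsymbol{\alpha})$ unfolds into the advertised constant $(24/\zeta)(84Ae)^{1/2}\sigma_{\max}^2 C_\sharp^{3/2}(1+\epsilon^{-2}+\epsilon^{-4})\sqrt{L\log(ML/\delta)/(DM)}$, which completes the proof. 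The only real obstacle is the bookkeeping step of aligning the factor $3\eta/(2\sqrt L)$ produced by the noise bound with the tuning parameter $\lambda$ in~\eqref{lambda} so that the numerical constants and the $L$-dependence match between the two theorems; no new probabilistic or analytic argument is required beyond what is already in place.
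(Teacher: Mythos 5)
Your proposal is correct and follows exactly the route the paper intends (the paper gives no separate proof, merely noting that the noise bound ``together with Theorem~\ref{lem:2-9-sc4}'' yields the corollary): verify \eqref{eta1} via Lemma~\ref{lemma4} and \eqref{eq:D1} by hypothesis, extract from Theorem~\ref{lem:main1} the event $\tfrac 2D\mathcal N_\star(X^\top\boldsymbol U)\le 3\eta/(2\sqrt L)$ of probability at least $1-\delta$, and feed it into the first oracle inequality. Two bookkeeping corrections, though: first, $3\eta/(2\sqrt L)=12(84Ae)^{\frac 12}\zeta^{-1}(1+\epsilon^{-2}+\epsilon^{-4})C_\sharp^{\frac 32}\sigma_{\max}^2\sqrt{\log(ML/\delta)/(DM)}$ equals $\lambda/(2\sqrt L)$ rather than ``exactly $\lambda/2$'' --- harmless, since $L\ge 1$ makes it still at most $\lambda/2$; second, and more substantively, with $\lambda$ as in \eqref{lambda} the first conclusion of Theorem~\ref{lem:2-9-sc4} produces the coefficient $2\lambda=\tfrac{48}{\zeta}(84Ae)^{\frac 12}\sigma_{\max}^2C_\sharp^{\frac 32}(1+\epsilon^{-2}+\epsilon^{-4})\sqrt{L\log(ML/\delta)/(DM)}$, which is \emph{twice} the constant advertised in the corollary, so your final ``unfolding'' step does not literally close. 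To land on the stated constant you should instead apply the oracle inequality with the minimal admissible tuning parameter $\lambda/2=12(84Ae)^{\frac 12}\zeta^{-1}(1+\epsilon^{-2}+\epsilon^{-4})C_\sharp^{\frac 32}\sigma_{\max}^2\sqrt{L\log(ML/\delta)/(DM)}$ --- which the noise bound already certifies satisfies $\lambda/2\ge\tfrac 2D\mathcal N_\star(X^\top\boldsymbol U)$ on the good event --- so that $2\cdot(\lambda/2)=\lambda=\tfrac{24}{\zeta}(84Ae)^{\frac 12}\sigma_{\max}^2C_\sharp^{\frac 32}(1+\epsilon^{-2}+\epsilon^{-4})\sqrt{L\log(ML/\delta)/(DM)}$ is precisely the coefficient in the statement.
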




\subsection{Estimating error of parameters, false discoveries}\label{rip}

We now deliver the treatment of assertions 1. and 2. on estimation error and support (via control of false discoveries) of our Main Theorem 3.1. For this we need to cope with the {\em Restricted Eigenvalue Property} as it typically arises in LASSO analysis (see \cite{MR3357870}).

\subsubsection{Control of the restricted eigenvalue property}
Our prediction guarantees stated so far did not impose restrictions on the minimum number of samples from each component.
For estimation and stability guarantees, however, we need to demand a minimum number of those samples.
In particular, we will need the following proposition, 
on the restricted eigenvalue property of the Gram matrix $X^\top X$. The proof presented in Appendix \ref{sc-07-11-1:proof} follows the same lines of arguments as in \cite{MR3357870}. 
Because of the non-uniform 
nature of the block dimensions of the data matrix in \eqref{data}, as well as 
the individual weights assigned to the components of the coefficient 
$\boldsymbol{\beta}$, we can only hope for a block-wise result as 
stated below.

The following proposition delivers a lower bound on the 
error-expression  $|\!|X_m\boldsymbol{v}_m|\!|_2^2$ with ${\boldsymbol{v}_m}$ the $m-$th component of $\hat{\boldsymbol{\beta}}-\boldsymbol{\beta}$.
\begin{proposition}[Bound on the Restricted Eigenvalue]\label{sc-07-11-1}
For any confidence parameter $\delta>0$ and all vectors $\boldsymbol{v}
=(\boldsymbol{v}_1^\top,\dots,
\boldsymbol{v}_M^\top)^\top\in (\mathbb R^L)^M$, 
the following inequality holds with $\zeta\deq\frac{\epsilon^4}{216}$ 
and $s_m$ even positive integers:

\begin{eqnarray}\label{eq:120}&&\mathbb P\left[\displaystyle\forall~ {m\in[M]}
\inf_{\boldsymbol{v}_m\in\mathbb R^L}
\left(|\!|X_m\boldsymbol{v}_m|\!|_2^2- 
\frac{T_m\sigma_m^2\epsilon^2}
2\left(|\!|\boldsymbol{v}_m|\!|_2^2-\frac{2}
{s_m}|\!|\boldsymbol{v}_m|\!|_1^2\right)\right)\geq 0\right]\nonumber\\
&\geq& 1-2\sum_{m=1}^M\exp\left(-\frac{T_m\min\{\zeta,\zeta^2\}}{2}
+s_m\min\{\log L,\log(21eL/s_m)\}\right)\,.\end{eqnarray}

\end{proposition}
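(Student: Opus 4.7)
The plan is to follow the template of Basu--Michailidis \cite{MR3357870} for the restricted eigenvalue property of stable VAR processes, adapted here to the blockwise-diagonal structure of $X$ in \eqref{data}, exploiting that the $M$ component processes are independent univariate $\epsilon$-stable AR processes with separate noise scales $\sigma_m$. Working blockwise is natural because the claimed inequality is stated separately for each $m$, and the total probability is a union bound.

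First I would establish a population-level lower bound on the expected Gram matrix for each block. Since the $m$-th component process is $\epsilon$-stable and univariate, its spectral density is $f_m(\theta) = \sigma_m^2/(2\pi\,|\mathcal{A}^m_{e^{-i\theta}}|^2)$, which under $\epsilon$-stability satisfies $f_m(\theta) \geq \sigma_m^2 \epsilon^2/(2\pi)$ pointwise. By the standard Toeplitz--spectral identity, the $L\times L$ autocovariance matrix $\Gamma_m \deq \mathbb{E}[X_t^{(m)}(X_t^{(m)})^\top]$ then satisfies $\lambda_{\min}(\Gamma_m) \geq \sigma_m^2\epsilon^2$, and symmetrically $\lambda_{\max}(\Gamma_m) \leq \sigma_m^2\epsilon^{-2}$. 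In particular $\mathbb{E}[\|X_m \boldsymbol{v}_m\|_2^2] \geq T_m\sigma_m^2\epsilon^2\|\boldsymbol{v}_m\|_2^2$, providing the target quantity on the right-hand side of the claimed inequality, with the factor $1/2$ absorbing the concentration slack.

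Second, I would use a Hanson--Wright-type concentration inequality for quadratic forms in Gaussian time series (as in \cite{MR3357870}, Proposition 4.2) to control, for each fixed $\boldsymbol{v}_m\in\mathbb{S}^{L-1}$, the deviation $|\boldsymbol{v}_m^\top X_m^\top X_m \boldsymbol{v}_m/T_m - \boldsymbol{v}_m^\top \Gamma_m \boldsymbol{v}_m|$. Because the effective variance of the quadratic form is governed by the operator norm of $\Gamma_m$ (bounded by $\sigma_m^2\epsilon^{-2}$), the tail splits into a sub-Gaussian and a sub-exponential regime, producing the $\min\{\zeta, \zeta^2\}$ scaling with $\zeta \deq \epsilon^4/216$; the $\epsilon^4$ arises as the product of the $\epsilon^2$ lower bound on $\lambda_{\min}(\Gamma_m)$ and the $\epsilon^{-2}$ upper bound on $\|\Gamma_m\|_{\mathrm{op}}$. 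I would then lift this pointwise bound to a uniform statement over $s_m$-sparse unit vectors by a standard union bound over the $\binom{L}{s_m}$ possible supports combined with a $1/4$-net on each $\mathbb{S}^{s_m-1}$ of size at most $9^{s_m}$ (plus the usual net-to-sphere trick for quadratic forms). The two ways of bounding $\binom{L}{s_m}$ by $L^{s_m}$ and by $(eL/s_m)^{s_m}$ yield the $\min\{\log L,\log(21eL/s_m)\}$ factor in the exponent.

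Third, I would promote the sparse RE bound to the announced "all $\boldsymbol{v}_m$" form via the Loh--Wainwright cone decomposition: ordering coordinates by magnitude and partitioning into consecutive blocks of size $s_m$, applying the sparse bound on each block and aggregating by Cauchy--Schwarz produces precisely the deflation term $-(2/s_m)\|\boldsymbol{v}_m\|_1^2$ on the right-hand side. Taking the union bound over $m \in [M]$ and using independence of the $M$ processes yields the claimed probability. The hardest step will be the second one: keeping explicit track of the constants through the Hanson--Wright inequality so as to reconcile the sub-Gaussian and sub-exponential regimes into a single clean factor $\min\{\zeta,\zeta^2\}/2$ with the particular value $\zeta=\epsilon^4/216$, since this is where the interplay between $\|\Gamma_m\|_{\mathrm{op}}$, $\lambda_{\min}(\Gamma_m)$, and the concentration bookkeeping is most delicate.
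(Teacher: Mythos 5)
Your proposal follows essentially the same route as the paper's proof: a blockwise Hanson--Wright concentration for the quadratic form $\boldsymbol{v}_m^\top X_m^\top X_m\boldsymbol{v}_m$ with the operator norm of the autocovariance controlled by the Basu--Michailidis Toeplitz spectral bounds ($\mathfrak m(f_m)\geq\epsilon^2$, $\mathfrak M(f_m)\leq\epsilon^{-2}$), a union bound over sparse supports with a net to get uniformity over $s_m$-sparse vectors, the Loh--Wainwright extension to all of $\mathbb R^L$ producing the $\tfrac{2}{s_m}|\!|\boldsymbol{v}_m|\!|_1^2$ deflation, and a final union bound over $m\in[M]$; your accounting for $\zeta=\epsilon^4/216$ as the product of the two spectral bounds (times the Loh--Wainwright factor $27$ and the Hanson--Wright factor $4$, absorbed by the $1/2$ slack) matches the paper's bookkeeping. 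The proposal is correct and essentially identical in structure to the paper's argument.
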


A proof of Proposition~\ref{sc-07-11-1} appears in Appendix \ref{sc-07-11-1:proof}.\\


Setting \begin{equation}\label{aux1}s_m\deq 2\Big
\lfloor\frac {T_m\zeta^2}{8\log L}\Big\rfloor\,,\end{equation} 
we get the following immediate corollary to Proposition~\ref{sc-07-11-1}: 

\begin{corollary}[Bound on the Restricted Eigenvalue for specific $s_m$]\label{res}
If \begin{equation}\label{eq:individual}
T_{\min}\geq 84e\zeta^{-2}
\log L\,,\end{equation} 
where $\zeta= 6^{-3}\epsilon^4$, then the following holds: 
\begin{align}\label{eq:1221}
&\mathbb P\left[\forall {m\in[M]}\inf_{\boldsymbol{v}_m\in\mathbb R^L}
\left(|\!|X_m\boldsymbol{v}_m|\!|_2^2- 
\frac{T_m\sigma_m^2\epsilon^2}
2|\!|\boldsymbol{v}_m|\!|_2^2\left(1-\frac{8L\log L}
{T_m\zeta^2}\right)\right)\geq 0\right]\nonumber\\
\geq~&1-2\sum_{m=1}^M
e^{-\frac{T_m\zeta^2}{4}}\,.\end{align}
\end{corollary}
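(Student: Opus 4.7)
The corollary is nothing more than Proposition~\ref{sc-07-11-1} instantiated with the specific choice $s_m \deq 2\lfloor T_m\zeta^2/(8\log L)\rfloor$ from \eqref{aux1}, so the plan is simply to substitute and simplify.

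First I would check that each $s_m$ is an admissible (even, positive integer) weight. The hypothesis $T_{\min} \geq 84e\zeta^{-2}\log L$ forces $T_m\zeta^2/(8\log L) \geq 84e/8$, which is comfortably larger than $1$, so $s_m$ is even and positive; moreover the standard estimate $x/2 \leq \lfloor x\rfloor \leq x$ (valid for $x \geq 1$) gives the two-sided sandwich
\[
\frac{T_m\zeta^2}{8\log L} \;\leq\; s_m \;\leq\; \frac{T_m\zeta^2}{4\log L},
\]
which is the only analytic input I will need.

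For the event inside \eqref{eq:120}, I would apply Cauchy--Schwarz in the form $|\!|\boldsymbol{v}_m|\!|_1^2 \leq L|\!|\boldsymbol{v}_m|\!|_2^2$, so that the bracket reads $|\!|\boldsymbol{v}_m|\!|_2^2 - (2/s_m)|\!|\boldsymbol{v}_m|\!|_1^2 \geq (1 - 2L/s_m)|\!|\boldsymbol{v}_m|\!|_2^2$, and then use the lower bound on $s_m$ to dominate $2L/s_m$ by $8L\log L/(T_m\zeta^2)$; this is exactly the bracket appearing in \eqref{eq:1221}.

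For the probability bound, the choice $\zeta = 6^{-3}\epsilon^4 < 1$ makes $\min\{\zeta,\zeta^2\} = \zeta^2$, and taking the $\log L$ branch of the inner minimum together with the upper bound $s_m \leq T_m\zeta^2/(4\log L)$ turns the exponent of Proposition~\ref{sc-07-11-1} into
\[
-\frac{T_m\zeta^2}{2} + s_m\log L \;\leq\; -\frac{T_m\zeta^2}{2} + \frac{T_m\zeta^2}{4} \;=\; -\frac{T_m\zeta^2}{4},
\]
which matches the summand $e^{-T_m\zeta^2/4}$ in \eqref{eq:1221}. There is no genuine obstacle; the one care-point is that the floor in \eqref{aux1} costs no more than an inessential constant factor, and this is guaranteed precisely by the hypothesis $T_{\min}\geq 84e\zeta^{-2}\log L$.
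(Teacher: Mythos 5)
Your proposal follows exactly the paper's own (one-line) proof: take $s_m$ as in \eqref{aux1} and apply the Cauchy--Schwarz bound $|\!|\boldsymbol{v}_m|\!|_1^2\leq L|\!|\boldsymbol{v}_m|\!|_2^2$ inside Proposition~\ref{sc-07-11-1}, with the hypothesis $T_{\min}\geq 84e\zeta^{-2}\log L$ guaranteeing $s_m\geq 2$ and the upper half of your sandwich giving the exponent $-T_m\zeta^2/4$. One small caveat: your lower bound $s_m\geq T_m\zeta^2/(8\log L)$ only yields $2L/s_m\leq 16L\log L/(T_m\zeta^2)$, not the $8L\log L/(T_m\zeta^2)$ appearing in \eqref{eq:1221}, so the bracket is off by a factor of $2$ unless the floor in \eqref{aux1} is dropped --- but this is precisely the imprecision already implicit in the paper's statement, not a defect of your argument.
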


\begin{proof}
We use $|\!|\boldsymbol{v}_m|\!|_1^2\leq L|\!|\boldsymbol{v}_m|\!|_2^2$ in Proposition~\ref{sc-07-11-1}.
\end{proof}

\bigskip

\subsubsection{Bounds on estimation error of the 
autoregressive coefficients, and false discovery
}\label{rip1}
This section contains the bound on the error of estimation of the autoregressive coefficients, and the false discovery.

The following is a compact notation used in the proof of the theorem below (also in the proof of Theorem \ref{lem:2-9-sc4}).

\begin{definition}[Notation]
\begin{equation}\label{nlew}{\mathcal N}_{\leq L_0}(\boldsymbol{v})
\deq\sum_{\ell=1}^{L_0}\sqrt{|{\mathcal G}_\ell|}\cdot
|\!|\boldsymbol{v}|\!|_{{\mathcal G}_\ell}
\,.\end{equation}
\end{definition}
The theorem below bounds the $\ell^2$-error in estimating the autoregressive coefficients $\boldsymbol{\beta}$, as well as the size of the set of false positives, using the penalized LASSO formulation as in \eqref{eq:1}.
\begin{theorem}[Bounds on the Estimation Error and False Discovery]\label{prop:21-10-sc1}
Let 
$\hat{\boldsymbol{\beta}}$ be 
the solution to the group-regularized LASSO in \eqref{eq:1} with $\lambda$ as in Equation \eqref{lambda} above --- namely, 
\begin{equation*}\label{lambda00}\lambda={24(84Ae)^{\frac 12}\zeta^{-1}\sigma_{\max}^2}
{C_\sharp}^{\frac 32}(1+\epsilon^{-2}+\epsilon^{-4})
\sqrt{\frac{L\log\left(\frac{ML}\delta\right)}{DM}}\end{equation*} where $\zeta=
6^{-3}\epsilon^4$; if \begin{equation}\label{alpha}\alpha\deq\min_{m\in[M]}
\frac{T_m\sigma_m^2}D\,,\end{equation} and 
$$D\geq \frac{8
\sigma_{\max}^2(1+\epsilon^{-2}+\epsilon^{-4})}{c_0\eta}\log\left(\frac{ML}{\delta}\right)\,,$$
then, for any confidence parameter $\delta>0$, 
the inequality \begin{equation}\label{vvv}|\!|\hat{\boldsymbol{\beta}}-\boldsymbol{\beta}|\!|_2\leq 
\frac{81(84Ae)^{\frac 12}LL_0\sigma_{\max}^2C_{\sharp}^{\frac 32}(1+\epsilon^{-2}+\epsilon^{-4})}{\zeta\alpha\epsilon^2}\sqrt{\frac{\log\left(\frac{ML}{\delta}\right)}D}\end{equation} holds with 
probability at least \begin{equation}\label{prob1}(1-\delta)\left(1-2\sum_{m=1}^ML^{-\frac{T_m}{4\log L}}\right)\,,\end{equation} 
provided $T_{\min}\geq 84eA\zeta^{-2}L_0\log L$ with $\zeta=
6^{-3}\epsilon^4$.\\

Moreover, introducing the notation $S_\lambda \deq \{j:|\hat{\boldsymbol{\beta}}_j|>\lambda\}$, then the false discovery is bounded by the following inequality holding with probability as in \eqref{prob1} above: \begin{equation}\label{fdsize}|S_\lambda\setminus \operatorname{supp}({\boldsymbol{\beta}})|~\leq \frac{243(84Ae)^{\frac 12}LL_0^{\frac 32}\sigma_{\max}^2C_{\sharp}^{\frac 32}(1+\epsilon^{-2}+\epsilon^{-4})}{\zeta\alpha\epsilon^2\lambda}\sqrt{\frac{\log\left(\frac{ML}{\delta}\right)}D}\,.\end{equation}
\end{theorem}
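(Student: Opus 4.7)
The argument is the standard LASSO oracle-inequality / restricted-eigenvalue chain, adapted to the overlapping-group setting. Three ingredients are already in place: the LASSO basic inequality (implicit in Theorem~\ref{lem:2-9-sc4}), the effective-noise control from Theorem~\ref{lem:main1}, and the restricted eigenvalue bound from Corollary~\ref{res}. The $\ell^2$ bound \eqref{vvv} comes from combining them, and the false-discovery bound \eqref{fdsize} then follows from \eqref{vvv} by a short thresholding argument.

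First I would reprove the cone condition. Starting from $f_\lambda(\hat{\boldsymbol{\beta}}) \leq f_\lambda(\boldsymbol{\beta})$, substituting $\boldsymbol{y} = X\boldsymbol{\beta} + \boldsymbol{U}$, and invoking duality $|\boldsymbol{U}^\top X v| \leq \mathcal{N}_\star(X^\top\boldsymbol{U})\mathcal{N}(v)$ with $v \deq \hat{\boldsymbol{\beta}} - \boldsymbol{\beta}$, then on the event $\{4D^{-1}\mathcal{N}_\star(X^\top\boldsymbol{U}) \leq \lambda\}$ (of probability at least $1-\delta$ by Theorem~\ref{lem:main1}) one obtains
\[
\tfrac{1}{D}\|Xv\|_2^2 + \lambda\mathcal{N}(\hat{\boldsymbol{\beta}}) \leq \tfrac{\lambda}{2}\mathcal{N}(v) + \lambda\mathcal{N}(\boldsymbol{\beta}).
\]
The leverage of the nested-group structure enters here: because $\boldsymbol{\beta}^m_j = 0$ for $j > L_0$ and $\mathcal{G}_1 \supseteq \cdots \supseteq \mathcal{G}_L$, one has $\mathcal{N}(\boldsymbol{\beta}) = \mathcal{N}_{\leq L_0}(\boldsymbol{\beta})$ and $\mathcal{N}(\hat{\boldsymbol{\beta}}) = \mathcal{N}_{\leq L_0}(\hat{\boldsymbol{\beta}}) + \mathcal{N}_{>L_0}(v)$. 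Substituting these, and using $\mathcal{N}(v) = \mathcal{N}_{\leq L_0}(v) + \mathcal{N}_{>L_0}(v)$, yields
\[
\tfrac{1}{D}\|Xv\|_2^2 + \tfrac{\lambda}{2}\mathcal{N}_{>L_0}(v) \leq \tfrac{3\lambda}{2}\mathcal{N}_{\leq L_0}(v),
\]
which produces both the prediction-side bound and the cone condition $\mathcal{N}_{>L_0}(v) \leq 3\mathcal{N}_{\leq L_0}(v)$.

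Next, applying Corollary~\ref{res} blockwise to $\boldsymbol{v}_m$ and summing over $m$, the hypothesis $T_{\min} \geq 84eA\zeta^{-2}L_0\log L$ forces the slack factor $1 - 8L\log L/(T_m\zeta^2)$ to exceed a positive constant (say $1/2$), so with probability at least $1 - 2\sum_m L^{-T_m/(4\log L)}$,
\[
\tfrac{1}{D}\|Xv\|_2^2 \geq \tfrac{\alpha\epsilon^2}{4}\|v\|_2^2,
\]
where $\alpha = \min_m T_m\sigma_m^2/D$ as in \eqref{alpha}. Chaining this with the prediction-side bound and the crude envelope $\mathcal{N}_{\leq L_0}(v) \leq L_0\sqrt{ML}\,\|v\|_2$ (from $\sqrt{|\mathcal{G}_\ell|}\leq \sqrt{ML}$ applied to each of the $L_0$ summands) gives $\|v\|_2 \leq 6\lambda L_0\sqrt{ML}/(\alpha\epsilon^2)$; substituting the explicit $\lambda$ from \eqref{lambda} produces \eqref{vvv}.

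For the false-discovery bound, $\boldsymbol{\beta}_j = 0$ and $|\hat{\boldsymbol{\beta}}_j| > \lambda$ whenever $j \in S_\lambda\setminus\operatorname{supp}(\boldsymbol{\beta})$, so $|v_j| > \lambda$ there; by Cauchy--Schwarz,
\[
\lambda\,|S_\lambda\setminus\operatorname{supp}(\boldsymbol{\beta})| \leq \sum_{j\in S_\lambda\setminus\operatorname{supp}(\boldsymbol{\beta})}|v_j| \leq \sqrt{|S_\lambda\setminus\operatorname{supp}(\boldsymbol{\beta})|}\,\|v\|_2,
\]
giving $|S_\lambda\setminus\operatorname{supp}(\boldsymbol{\beta})| \leq \|v\|_2^2/\lambda^2$, from which \eqref{fdsize} follows after substituting \eqref{vvv} and reorganising the resulting expression. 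The main obstacle throughout is careful bookkeeping: (i) tracking the group-norm splittings $\mathcal{N} = \mathcal{N}_{\leq L_0} + \mathcal{N}_{>L_0}$ via the nested-group identities above, which is what makes the $L_0$-dependent (rather than $L$-dependent) rate possible in the overlapping-group setting; (ii) propagating the explicit constants so that the final expressions match \eqref{vvv} and \eqref{fdsize} exactly (the numerical factors $81$, $243$, the powers of $\zeta,\epsilon,C_\sharp$, and so on); and (iii) intersecting the noise event ($\geq 1-\delta$) with the restricted-eigenvalue event ($\geq 1 - 2\sum_m L^{-T_m/(4\log L)}$) to obtain the joint probability \eqref{prob1}.
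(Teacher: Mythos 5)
Your derivation of the estimation-error bound \eqref{vvv} follows essentially the paper's own route: the basic LASSO inequality plus the cone condition $\mathcal{N}_{>L_0}(\boldsymbol{v})\leq 3\mathcal{N}_{\leq L_0}(\boldsymbol{v})$, the blockwise restricted-eigenvalue bound of Corollary~\ref{res}, and the envelope $\mathcal{N}_{\leq L_0}(\boldsymbol{v})\leq L_0\sqrt{ML}\,\|\boldsymbol{v}\|_2$. Your constants come out somewhat larger (the paper keeps the restricted-eigenvalue slack factor at $8/9$ rather than your $1/2$, arriving at $27/8$ where you get $6$, hence $81$ versus your implied $144$), but you flag the bookkeeping issue and the structure of the argument is the same.

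The false-discovery step, however, has a genuine gap. Your Cauchy--Schwarz argument on the set $S_\lambda\setminus\operatorname{supp}(\boldsymbol{\beta})$ yields $|S_\lambda\setminus\operatorname{supp}(\boldsymbol{\beta})|\leq\|\boldsymbol{v}\|_2^2/\lambda^2$, which is a correct inequality but is \emph{not} the claimed bound \eqref{fdsize}: the target is of the form $3\sqrt{L_0}\,\|\boldsymbol{v}\|_2/\lambda$ (the constant $243$ is exactly $3$ times the $81$ of \eqref{vvv}, and the exponent rises from $L_0$ to $L_0^{3/2}$), and $\|\boldsymbol{v}\|_2^2/\lambda^2\leq 3\sqrt{L_0}\,\|\boldsymbol{v}\|_2/\lambda$ would require $\|\boldsymbol{v}\|_2\leq 3\sqrt{L_0}\,\lambda$, which \eqref{vvv} does not guarantee --- it only gives $\|\boldsymbol{v}\|_2\lesssim \lambda L_0\sqrt{ML}/(\alpha\epsilon^2)$, and $\alpha$ is of order $1/M$. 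So your quadratic-in-$(\|\boldsymbol{v}\|_2/\lambda)$ bound is in general strictly weaker and cannot be ``reorganised'' into \eqref{fdsize}. The paper instead routes through the $\ell^1$ cone condition of Remark~\ref{rem:bounds}: for $S\deq\operatorname{supp}(\boldsymbol{\beta})$ one has $\sum_{j\notin S}|v_j|\leq 3\sum_{j\in S}|v_j|$, and then Cauchy--Schwarz over the \emph{true} support (of cardinality at most $L_0$ per component) gives $\lambda\,|S_\lambda\setminus S|\leq \sum_{j\notin S}|v_j|\leq 3\sqrt{L_0}\,\|\boldsymbol{v}\|_2$, which is linear in $\|\boldsymbol{v}\|_2/\lambda$ and produces the stated form. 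To repair your argument, replace the direct Cauchy--Schwarz on $S_\lambda\setminus S$ by this transfer of $\ell^1$ mass onto the support.
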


\begin{remark}
In the above, the term inside the square root decreases asymptotically, and the terms outside may change or stay fixed (depending on how the number of samples for each component increases).
\end{remark}

\begin{proof}[Proof of Theorem \ref{prop:21-10-sc1}]
For ease of notation, write $$\boldsymbol{v}\deq\hat{\boldsymbol{\beta}}-\boldsymbol{\beta}\,.$$ 
By Proposition~\ref{sc-07-11-1} above (which can be 
applied since $T_{\min}\geq 84e\zeta^{-2}\log L$), one has \begin{eqnarray*}\frac 1D|\!|X\boldsymbol{v}
|\!|_2^2 &\geq& \frac 1D\sum_{m=1}^M|\!|X_m\boldsymbol{v}_m|\!|_2^2\\
&\geq& \frac{\epsilon^2}2\sum_{m=1}^M\frac{\sigma_m^2T_m}
{D}|\!|\boldsymbol{v}_m|\!|_2^2\left(1-\frac{8L\log L}
{T_m\zeta^2}\right)\\
&>& \frac{4\alpha\epsilon^2}{9}|\!|\boldsymbol{v}|\!|_2^2\,. 
\end{eqnarray*}
To this, we now apply (see \eqref{eq:18-10-sc401} in Appendix \ref{lem:2-9-sc4:proof}) the inequality 
\begin{equation*}
\frac 1D|\!|X(\boldsymbol{\beta}-\hat{\boldsymbol{\beta}})|\!|_2^2 
\leq 3\lambda{\mathcal N}_{\leq L_0}(\boldsymbol{\beta}-\boldsymbol{\hat\beta})\,,\end{equation*} 
to obtain
\begin{eqnarray*}
|\!|\boldsymbol{v}|\!|_2^2&\leq & \frac{27\lambda}{8\alpha\epsilon^2}
\mathcal N_{\leq L_0}(\boldsymbol{v})\\
&\leq&\frac{27\lambda L_0\sqrt{ML}}{8\alpha\epsilon^2}
|\!|\boldsymbol{v}|\!|_2\\
\Rightarrow\hspace{1cm}|\!|\boldsymbol{v}|\!|_2&\leq &
\frac{27\lambda L_0\sqrt{ML}}{8\alpha\epsilon^2}
\end{eqnarray*}
With the choice of $\lambda$ as in equation \eqref{lambda}, namely, $$\lambda={24(84Ae)^{\frac 12}\zeta^{-1}\sigma_{\max}^2}
{C_\sharp}^{\frac 32}(1+\epsilon^{-2}+\epsilon^{-4})
\sqrt{\frac{L\log\left(\frac{ML}\delta\right)}{DM}}\,,$$ it now 
follows that \begin{eqnarray*}|\!|\boldsymbol{v}|\!|_2&\leq &
\frac{81(84Ae)^{\frac 12}LL_0\sigma_{\max}^2C_{\sharp}^{\frac 32}(1+\epsilon^{-2}+\epsilon^{-4})}{\zeta\alpha\epsilon^2}\sqrt{\frac{\log\left(\frac{ML}{\delta}\right)}D}\,,
\end{eqnarray*} as claimed. To get the bound on the false discovery, write $S\deq\operatorname{supp}({\boldsymbol{\beta}})$; the bound on the false discovery can be obtained as follows: \begin{align*}|S_\lambda\setminus S|&= \sum_{j\in [ML]\setminus S}1_{|\hat{\boldsymbol{\beta}}_j|>\lambda}(\hat{\boldsymbol{\beta}})\\ 
&= \sum_{j\in [ML]\setminus S}1_{|\hat{\boldsymbol{v}}_j|>\lambda}(\hat{\boldsymbol{v}})\\ &\leq \frac 1\lambda\sum_{j\in [ML]\setminus S}|\hat{\boldsymbol{v}}_j|\\ \mbox{by remark \eqref{rem:bounds}}\hspace{1cm}&\leq \frac 3\lambda\sum_{j\in S}|\hat{\boldsymbol{v}}_j|\\ 
&\leq \frac {3\sqrt{L_0}}\lambda|\!|\hat{\boldsymbol{v}}|\!|_2\end{align*} which yields the bound in \eqref{fdsize} by inequality \eqref{vvv}.
\end{proof}


\begin{remark}
For the explicit choice of $\lambda$ as in equation \eqref{lambda}, namely, 
\begin{equation*}\lambda={24(84Ae)^{\frac 12}\zeta^{-1}\sigma_{\max}^2}
{C_\sharp}^{\frac 32}(1+\epsilon^{-2}+\epsilon^{-4})
\sqrt{\frac{L\log\left(\frac{ML}\delta\right)}{DM}}\end{equation*} with $\zeta =6^{-3}\epsilon^4$, 
the inequality \eqref{fdsize} becomes \begin{equation}\label{fdsize1}|S_\lambda\setminus \operatorname{supp}({\boldsymbol{\beta}})|~\leq \frac{81(ML)^{\frac 12}L_0^{\frac 32}}{8\alpha\epsilon^2}\,.\end{equation}
\end{remark}



\subsection{\bf{Stability of the estimated AR model}}\label{stasta}
We finally prove that the coefficients estimated as per the overlapping-group-LASSO in 
\eqref{eq:1}, with $\lambda$ as in \eqref{lambda}, lie in the region for stability of 
univariate lag-$L$ autoregressive processes, even when the number of post-samples 
is 'not too large'. More specifically, we have the following 
theorem. 
\begin{theorem}[Stability Guarantee]\label{sc1-07-11:2}
Let $A\geq 1$ be a confidence parameter. Let $\hat{\boldsymbol{\beta}}$ 
be the output of the group-LASSO in \eqref{eq:1} above, using $D$ post-samples, where  
\begin{equation}\label{D-bound}D\geq 
3^9\cdot (84Ae)C_\sharp^5(1+\epsilon^{-2}+\epsilon^{-4})^2\left(\frac{\sigma_{\max}}{\sigma_{\min}}\right)^4(\epsilon^3\zeta)^{-2}M^aL_0^2L^3
\log\left(\frac{ML}\delta\right)\log (2L)\ .\end{equation}
If $T_{\min}\geq 84eA\zeta^{-2}L_0\log L$, and $L$ satisfies \begin{equation}\label{L-bound}
    n_{\min}=L+84Ae\zeta^{-2}L\log\left(\frac{ML}\delta\right)\,,
\end{equation}
with $\zeta=6^{-3}\epsilon^4$, then the AR-models --- fitted with coefficients $\hat{\boldsymbol{\beta}}_0$ returned by Algorithm~\ref{alg:learnar} ("AR Coefficient Estimation Pipeline") --- are stable, with probability as in \eqref{prob1} --- in the following scenarios: 
\begin{enumerate}
\item when all the time series are different realizations of a unique 
underlying stochastic process, $a\geq1$, and $$\hat{\boldsymbol{\beta}}_0\deq
\frac 1M\sum_{m=1}^M\hat{\boldsymbol{\beta}}_m\,;$$ 
\item when all the time series are realizations of 
different underlying stochastic processes (equivalently, all the 
$\boldsymbol{\beta}_m$'s are different), $a\geq 2$, and $\hat{\boldsymbol{\beta}}_0\deq
\hat{\boldsymbol{\beta}}$.
\end{enumerate} 
\end{theorem}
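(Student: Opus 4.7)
The plan is to reduce stability to a perturbation bound on the estimated reverse characteristic polynomial, and then plug in the $\ell_2$-estimation rate from Theorem~\ref{prop:21-10-sc1}. Concretely, for each component $m$, the fitted AR model with coefficient vector $\hat{\boldsymbol\beta}^{\,0}_m$ is stable if and only if its reverse characteristic polynomial $\hat P_m(z)=1-\sum_{j=1}^L\hat\beta^{0,m}_j z^j$ has no zero in $\mathbb D$. By the $\epsilon$-stability hypothesis, the true polynomial $P_m(z)=1-\sum_{j=1}^L\beta^m_j z^j$ satisfies $|P_m(z)|\geq \epsilon$ on $\mathbb D$, so the maximum modulus principle together with the triangle inequality yields
\begin{equation*}
|\hat P_m(z)|\;\geq\;\epsilon-\sup_{|z|\leq 1}|P_m(z)-\hat P_m(z)|\;\geq\;\epsilon-\sum_{j=1}^L|\beta^m_j-\hat\beta^{0,m}_j|\;\geq\;\epsilon-\sqrt L\,\|\boldsymbol\beta_m-\hat{\boldsymbol\beta}^{\,0}_m\|_2,
\end{equation*}
so stability is \emph{ensured} deterministically whenever $\sqrt L\,\|\boldsymbol\beta_m-\hat{\boldsymbol\beta}^{\,0}_m\|_2<\epsilon$ for each $m\in[M]$.

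The next step is to invoke Theorem~\ref{prop:21-10-sc1}, which under the assumptions $T_{\min}\geq 84eA\zeta^{-2}L_0\log L$ and $L$ chosen via \eqref{L-bound} guarantees, with probability at least \eqref{prob1}, the bound \eqref{vvv} on $\|\hat{\boldsymbol\beta}-\boldsymbol\beta\|_2$. In \textbf{Case 2}, where $\hat{\boldsymbol\beta}^{\,0}=\hat{\boldsymbol\beta}$, I would simply use the component-wise domination $\|\hat{\boldsymbol\beta}^{\,0}_m-\boldsymbol\beta_m\|_2\leq\|\hat{\boldsymbol\beta}-\boldsymbol\beta\|_2$ and impose $\sqrt L\cdot(\mbox{RHS of \eqref{vvv}})<\epsilon$. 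After absorbing the factor $1/\alpha\lesssim MC_\sharp/\sigma_{\min}^2$ (which follows from $\alpha\geq\sigma_{\min}^2 T_{\min}/D\geq \sigma_{\min}^2/(MC_\sharp)$), solving for $D$ produces the required lower bound with an overall factor $M^2$, which matches $a\geq 2$. In \textbf{Case 1}, where all $\boldsymbol\beta_m=\boldsymbol\beta^\star$ coincide and $\hat{\boldsymbol\beta}^{\,0}=\frac{1}{M}\sum_m\hat{\boldsymbol\beta}_m$, Jensen's inequality (equivalently Cauchy--Schwarz) gives
\begin{equation*}
\|\hat{\boldsymbol\beta}^{\,0}-\boldsymbol\beta^\star\|_2\;\leq\;\frac{1}{M}\sum_{m=1}^M\|\hat{\boldsymbol\beta}_m-\boldsymbol\beta^\star\|_2\;\leq\;\frac{1}{\sqrt M}\,\|\hat{\boldsymbol\beta}-\boldsymbol\beta\|_2,
\end{equation*}
so one gains a factor $M^{-1/2}$ in the perturbation, and the sample-complexity requirement on $D$ drops by a factor $M$, matching $a\geq 1$.

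With these two reductions, the last step is purely algebraic: substitute \eqref{vvv} into the sufficient conditions $\sqrt L\cdot\|\hat{\boldsymbol\beta}^{\,0}_m-\boldsymbol\beta_m\|_2<\epsilon$ in each case, square, and solve the resulting inequality for $D$. With $\lambda$ as in \eqref{lambda} and $\zeta=6^{-3}\epsilon^4$, the scaling $L\cdot L_0^2\cdot C_\sharp^3(1+\epsilon^{-2}+\epsilon^{-4})^2(\sigma_{\max}/\sigma_{\min})^4/(\zeta\epsilon^2)^2\cdot\log(ML/\delta)$ emerges, with the extra factor $L$ arising from the initial $\sqrt L$ in the reduction and a $\log(2L)$ factor produced when translating the upper bound \eqref{L-bound} on $L$ into an explicit polynomial bound on $L$. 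This is precisely the statement \eqref{D-bound}.

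The main obstacle I foresee is \emph{not} the structural argument (which is essentially a Rouché-style perturbation combined with the earlier $\ell_2$-rate) but rather the bookkeeping: verifying that the numerous constants $\zeta$, $\alpha$, $C_\sharp$, $C_\epsilon$, $\sigma_{\max}/\sigma_{\min}$ that appear in \eqref{vvv} collapse to the expression on the right-hand side of \eqref{D-bound} with exactly the stated exponent $a\in\{1,2\}$ in the two cases. In particular, care is needed to (i) express $\alpha$ in terms of $M$, $C_\sharp$ and $\sigma_{\min}$ rather than hiding it, (ii) ensure compatibility between the high-probability event of Theorem~\ref{prop:21-10-sc1} and the deterministic bound \eqref{L-bound} defining $L$ implicitly through $n_{\min}$, so that the probability \eqref{prob1} is inherited verbatim, and (iii) confirm that the $\log(2L)$ factor (rather than $\log L$) is needed because \eqref{L-bound} yields $L\lesssim n_{\min}/\log(ML/\delta)$ only up to logarithmic terms.
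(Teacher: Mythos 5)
Your proposal is correct and follows essentially the same route as the paper: a triangle-inequality/Rouch\'e-style perturbation of the reverse characteristic polynomial reducing stability to $\sqrt L\,\|\hat{\boldsymbol\beta}_0-\boldsymbol\beta_0\|_2<\epsilon$, the Jensen/convexity gain of $M^{-1/2}$ in Case~1 versus plain componentwise domination in Case~2, the bound $\alpha\geq\sigma_{\min}^2/(MC_\sharp)$, and substitution of the rate \eqref{vvv} to solve for $D$. The only cosmetic difference is that the paper bounds the polynomial perturbation via Cauchy--Schwarz against $\|(z,\dots,z^L)\|_2\leq\sqrt L$ rather than via $\|\cdot\|_1\leq\sqrt L\|\cdot\|_2$, and the $\log(2L)$ factor in \eqref{D-bound} is simply slack built into the sample-size requirement so that $\epsilon-\epsilon/\sqrt{3\log(2L)}>0$, not an artifact of inverting \eqref{L-bound}.
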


The lower bound on $T_{\min}$ aligns with the upper bound in Equation~\eqref{tmin} as $L\geq L_0$. The stated value of $n_{\min}$ chooses the smallest value of~$L$ to satisfy both the upper and lower bound on~$T_{\min}$.

\begin{proof}[Proof of Theorem \ref{sc1-07-11:2}]
\begin{enumerate}
\item We start with the first case. The idea of the proof is 
as follows. All the component $\hat{\boldsymbol{\beta}}_m$'s 
of $\hat{\boldsymbol{\beta}}$ are approximations of the same underlying 
${\boldsymbol{\beta}}_0\deq{\boldsymbol{\beta}}_m$ for all $m\in [M]$. Therefore, by the bound in Proposition \ref{prop:21-10-sc1} 
above and by convexity of the square function, their mean must be 
$\ell^2$-close to $\boldsymbol{\beta}_0$, 
with $D=O(ML_0^2L^3\log(ML)\log (2L))$ many samples. Moreover, the $0.5L^{-1}\epsilon$ 
perturbation of the coefficients preserves stability of the $\epsilon$-stable process.

More explicitly, we have 
\begin{align*}\Bigg|\!\Bigg|\left(\sum_{m=1}^M\frac 1M
\hat{\boldsymbol{\beta}}_m\right)-{\boldsymbol{\beta}}_0
\Bigg|\!\Bigg|_2^2&=\Bigg|\!\Bigg|\frac 1M\sum_{m=1}^M
(\hat{\boldsymbol{\beta}}_m-{\boldsymbol{\beta}}_m)
\Bigg|\!\Bigg|_2^2\\ \mbox{convexity}~\Rightarrow\hspace{1cm}&\leq\frac 1M\sum_{m=1}^M|\!|\hat{\boldsymbol{\beta}}_m-{\boldsymbol{\beta}}_m|\!|_2^2\\
&=\frac 1M|\!|\hat{\boldsymbol{\beta}}-{\boldsymbol{\beta}}|\!|_2^2,\end{align*} 
and by Theorem \ref{prop:21-10-sc1}, this yields 
\begin{align}\Bigg|\!\Bigg|\left(\sum_{m=1}^M\frac 1M
\hat{\boldsymbol{\beta}}_m\right)-{\boldsymbol{\beta}}
\Bigg|\!\Bigg|_2&\leq \frac 1{\sqrt M}
|\!|\hat{\boldsymbol{\beta}}-{\boldsymbol{\beta}}|\!|_2\nonumber\\
&\leq \frac{81(84Ae)^{\frac 12}LL_0\sigma_{\max}^2C_{\sharp}^{\frac 32}(1+\epsilon^{-2}+\epsilon^{-4})}{\zeta\alpha\epsilon^2}\sqrt{\frac{\log\left(\frac{ML}{\delta}\right)}{MD}}
\end{align} with high probability. 
Note that \begin{equation}\label{alpha2}\alpha=\min_{m\in[M]}
\frac{T_m\sigma_m^2}D\geq \frac{T_{\min}\sigma_{\min}^2}D
\geq \frac{T_{\min}\sigma_{\min}^2}{MT_{\max}}
\geq \frac{\sigma_{\min}^2}{MC_{\sharp}}\,.\end{equation} When $$D\geq
3^9\cdot (84Ae)C_\sharp^5(1+\epsilon^{-2}+\epsilon^{-4})^2\left(\frac{\sigma_{\max}}{\sigma_{\min}}\right)^4(\epsilon^3\zeta)^{-2}ML_0^2L^3
\log\left(\frac{ML}\delta\right)\log (2L)\,,$$ 
this yields \begin{align*}\Bigg|\!\Bigg|\left(\sum_{m=1}^M\frac 1M
\hat{\boldsymbol{\beta}}_m\right)-{\boldsymbol{\beta}}
\Bigg|\!\Bigg|_2&\leq \frac{81(84Ae)^{\frac 12}LL_0\sigma_{\max}^2C_{\sharp}^{\frac 32}(1+\epsilon^{-2}+\epsilon^{-4})}{\zeta\alpha\epsilon^2}\sqrt{\frac{\log\left(\frac{ML}{\delta}\right)}{MD}}\\
&\leq \frac{\epsilon\sigma_{\min}^2}{\sqrt 3 C_{\sharp}M\alpha}\sqrt{\frac{1}{L\log (2L)}}\\
\eqref{alpha2}~\Rightarrow\hspace{1cm}&<
\frac {\epsilon}{\sqrt 3
}\sqrt{\frac1{L\log (2L)}}\,.\end{align*}
Since $\epsilon\in (0,1)$, the stability follows by the triangle inequality. More explicitly, 
if $\hat{\boldsymbol{\beta}}_0\deq M^{-1}(\hat{\boldsymbol{\beta}}_1+\cdots+
\hat{\boldsymbol{\beta}}_M)$, then for 
every $z\in \mathbb D$, we have : 
\begin{align*}|\boldsymbol{f}_{\hat
{\boldsymbol{\beta}}_0}(z)|&= |1-\hat
{\boldsymbol{\beta}}_0\cdot (z,z^2,\cdots,z^L)|\\ &= |1-
{\boldsymbol{\beta}}_0\cdot (z,z^2,\cdots,z^L)-(\hat
{\boldsymbol{\beta}}_0-{\boldsymbol{\beta}}_0)\cdot (z,z^2,\cdots,z^L)|\\ 
&\geq |\boldsymbol{f}_{{\boldsymbol{\beta}_0}}(z)|-|\!|\hat
{\boldsymbol{\beta}}_0-{\boldsymbol{\beta}}_0|\!|_2 \cdot |\!|(z,z^2,\cdots,z^L)|\!|_2\
\\ &> \epsilon-\frac {\epsilon}{\sqrt 3
}\sqrt{\frac1{\log (2L)}}\\ &>0\,.\end{align*} Evidently, 
this shows that the autoregressive process with ${\hat
{\boldsymbol{\beta}}_0}$ coefficients is stable.\\

\item The arguments are similar in the second case. Since 
$$D\geq 3^9\cdot (84Ae)C_\sharp^5(1+\epsilon^{-2}+\epsilon^{-4})^2\left(\frac{\sigma_{\max}}{\sigma_{\min}}\right)^4(\epsilon^3\zeta)^{-2}M^2L_0^2L^3
\log\left(\frac{ML}\delta\right)\log (2L)\,,$$ by 
Theorem \ref{prop:21-10-sc1}, for any $m\in [M]$ we have 
\begin{align*}|\!|\hat{\boldsymbol{\beta}}_m-
{\boldsymbol{\beta}}_m|\!|_2&\leq |\!|\hat{\boldsymbol{\beta}}-
{\boldsymbol{\beta}}|\!|_2\nonumber\\ &\leq 
\frac{81(84Ae)^{\frac 12}LL_0\sigma_{\max}^2C_{\sharp}^{\frac 32}(1+\epsilon^{-2}+\epsilon^{-4})}{\zeta\alpha\epsilon^2}\sqrt{\frac{\log\left(\frac{ML}{\delta}\right)}{D}}\nonumber\\
\eqref{alpha2}~\Rightarrow\hspace{1cm}&<
\frac {\epsilon}{\sqrt 3
}\sqrt{\frac1{L\log (2L)}}\,.
\end{align*} As in the first case, this implies stability for each of the components.

\end{enumerate}
\end{proof}

\section{Algorithmic Aspects}

In this section we briefly introduce the main gazette of our algorithm --- namely, the proximal operator ---  to understand the procedure of solving LASSO as was done in \cite{MR4209452}. To sketch the outline of the standard procedure for solving regularized LASSO penalized with an overlapping group-norm, we start with the following definition (see \cite{MR2845676}, for example).

\begin{definition}[Proximal operator]
    Given a norm $\mathcal N$ on $\mathbb R^{ML}$, and a tuning parameter $\lambda$, 
    the associated proximal operator $\operatornamewithlimits{Prox}_{\mathcal N,\lambda}$ is defined for every $\boldsymbol{\alpha}\in\mathbb R^{ML}$ as the optimum value of the following convex problem: \begin{align*}
        \operatornamewithlimits{Prox}_{\mathcal N,\lambda}(\boldsymbol{\alpha})\deq  \operatornamewithlimits{argmin}_{\boldsymbol{\beta}}\left\{\frac 12|\!|\boldsymbol{\beta}-\boldsymbol{\alpha}|\!|_2^2+\lambda\mathcal N(\boldsymbol{\beta})\right\}\,.
    \end{align*}
\end{definition}

By \cite{MR2549566}, the proximal operator $\operatornamewithlimits{Prox}_{\mathcal N,\lambda}$ --- for $\mathcal N$ as defined in \eqref{Neq} --- is the composition $$\operatornamewithlimits{Prox}_{\mathcal G_1,\lambda}\circ\cdots\circ \operatornamewithlimits{Prox}_{\mathcal G_L,\lambda}$$ of the proximal operators for the individual groups 
and can be computed inductively --- starting from $\operatornamewithlimits{Prox}_{\mathcal G_L,\lambda}$, which is the well-known soft-thresholding --- in $O(ML)$ computational steps. By \citet[Proposition 3.1(iii)b]{MR2203849}, the solutions to the regularized LASSO problem in \eqref{eq:1} are precisely the fixed points of the operator \begin{equation}\label{proximal-fixed}\boldsymbol{\beta}\longmapsto \operatornamewithlimits{Prox}_{\mathcal N,\lambda}\left(\boldsymbol{\beta}+\frac {2\lambda}D\boldsymbol{X}^\top(\boldsymbol{y}-\boldsymbol{X}\boldsymbol{\beta})\right)\,.\end{equation} Here we use an appropriate $\lambda$ (as  given in \eqref{lambda} above). As \cite{MR4209452} observed, the proximal operator can be evaluated via duality. The proximal gradient method of \cite{MR2845676} then finds the fixed point (which exists and is unique by convexity of \eqref{eq:1}) of this proximal operator. For pseudo-code of this procedure, see \cite{MR4209452}, where an accelerated version of the proximal descent method was employed for achieving quadratic convergence rate.



\section{Conclusion}
We have established a set-up (see Equation \eqref{eq:1} and the discussion preceding this equation) in which LASSO --- regularized with a hierarchical group norm --- can be used to derive statistical guarantees in terms of the one-step ahead prediction error (Theorem \ref{lem:2-9-sc4}) in the realms of multiple $\epsilon$-stable (Definition \eqref{defn:new1}) univariate autoregressive processes of different lengths but identical true lag $L_0$. 
The results presented here assume no prior knowledge of the true lag (or any upper-bound of the true lag); in fact, we show that the sample size itself suggests a certain lag $\hat{L}$ to be used for the group-LASSO, and given an appropriately large sample size, such that $\hat{L}$ will be an upper-bound of the true lag $L_0$. Moreover, this $\hat{L}$ will be of the order that is required for our theoretical guarantees to hold. We proved that the group-LASSO formulated with a suitable tuning parameter $\lambda$ estimates the AR coefficients with an arbitrarily high degree of accuracy. We also showed the support of the estimated coefficient-set approximately matches the support of the original parameters (Theorem \ref{prop:21-10-sc1}). Finally, we proved that the fitted models with coefficients as estimated by the group-LASSO are $\epsilon$-stable (Theorem \ref{sc1-07-11:2}), a property that is known in the literature solely for Yule-Walker estimates of the parameters of univariate autoregressive processes.

From a theoretical perspective, it will be interesting to investigate adaptations of the group-LASSO method to the case of multiple decoupled AR processes with multivariate components. We expect that this will require, among others, (1) additional techniques to deal with the group-norm, and (2) integrating the stability issues and the restricted eigenvalue issues; these will be technically far more demanding in the multivariate components settings. On the practical front, the most important question is to get a better hold on the tuning parameter $\lambda$ (see Equation \eqref{lambda}), which requires better constant/parameters than, for example, those appearing in the proof of Lemma \ref{lemma4}. A better control on these will enable a more realistic estimate of $\hat{L}$ --- to be used by the group-LASSO as an upper-bound on the true lag $L_0$.


\begin{funding}
S.C.~and J.L.~acknowledge funding from the Deutsche Forschungsgemeinschaft (DFG) under grant number 451920280. S.C.~was partially supported by a `Research Assistantship Fellowship for Early Postdocs' from Ruhr-Universit\"at Bochum Research School by means of the German Academic Exchange Service (DAAD) STIBET funds. 
\end{funding}


\bigskip

\appendix



\begin{appendix}

\section{Proof of Theorem \ref{lem:2-9-sc4}}\label{lem:2-9-sc4:proof}
\begin{proof}
Denote the data by ${\mathcal X}$. Define the random vector $\boldsymbol{U}\in\mathbb R^D$ as follows: $$\boldsymbol{U}\deq (U_1^1,\dots,U_{T_1}^1,U_1^2,\dots,U_{T_2}^2,\cdots,U_1^M,
\dots,U_{T_M}^M)^\top\,.$$ We note the following series of 
inequalities, all of which follow from linearity of expectation: here, we 
write $\boldsymbol{\beta}$ for the true coefficient vector of the model. One has \begin{eqnarray*}&&
\mathbb E\left[|\!|\boldsymbol{y}-X\hat{\boldsymbol{\beta}}|\!|_2^2
\mid X\right]\\ &=&\mathbb E\left[|\!|\boldsymbol{y}-
X{\boldsymbol{\beta}}|\!|_2^2\mid X\right]+
\mathbb E\left[|\!|X(\boldsymbol{\beta}-\hat{\boldsymbol{\beta}})|\!|_2^2
\mid X\right]\\ && \hspace{3cm} +2\mathbb E
\left[\langle \boldsymbol{y}-X\boldsymbol{\beta}, X\boldsymbol{\beta}
-X\hat{\boldsymbol{\beta}}\rangle\mid X\right]\\ &
=&\mathbb E\left[|\!|\boldsymbol{U}|\!|_2^2\mid X\right]+
\mathbb E\left[|\!|X(\boldsymbol{\beta}-\hat{\boldsymbol{\beta}})|\!|_2^2
\mid X\right]\\ && \hspace{3cm} +2\mathbb E
\left[\langle\boldsymbol{U}, X\boldsymbol{\beta}-X\hat{\boldsymbol{\beta}}
\rangle\mid X\right]\\ &=&T_1\sigma_1^2+\cdots+
T_M\sigma_M^2+\mathbb E\left[|\!|X(\boldsymbol{\beta}-
\hat{\boldsymbol{\beta}})|\!|_2^2\mid X\right]+
2\langle\mathbb E\left[\boldsymbol{U}\mid X\right], 
X\boldsymbol{\beta}-X\hat{\boldsymbol{\beta}}\rangle\\ &=& T_1
\sigma_1^2+\cdots+T_M\sigma_M^2+|\!|X(\boldsymbol{\beta}-
\hat{\boldsymbol{\beta}})|\!|_2^2\,,\end{eqnarray*} where the last equality 
follows from $\mathbb E\left[\boldsymbol{U}\mid X\right]=
\boldsymbol{U}
$, and $\boldsymbol U\perp (X\boldsymbol{\beta}-X\hat{\boldsymbol{\beta}})$. Therefore, we 
need to derive an estimate of the error $|\!|X(\boldsymbol{\beta}-
\hat{\boldsymbol{\beta}})|\!|_2^2$. We notice that \begin{eqnarray*}|\!|X
(\boldsymbol{\beta}-\hat{\boldsymbol{\beta}})|\!|_2^2&=&\sum_{m=1}^M
\sum_{t=1}^{T_m}\left(\sum_{l=1}^L\left(\hat\beta_l^m-\boldsymbol{\beta}_l^m
\right)x_{t-l}^m\right)^2\,,\end{eqnarray*} where $\{\boldsymbol{\beta}_l^m\}$ 
denotes the true-parameters of the models. Using the fact that $\hat{\boldsymbol{\beta}}$ is a minimizer of $\frac 1D|\!|\boldsymbol{y}-X\boldsymbol{\alpha}|\!|_2^2+
\lambda\sum_{l=1}^L
\mathcal N({\boldsymbol{\alpha}})$, over $\boldsymbol{\alpha} 
\in \mathbb R^{ML}$, one derives \begin{eqnarray*}
&\frac 1D|\!|\boldsymbol{y}-X\hat{\boldsymbol{\beta}}|\!|_2^2+
\lambda\mathcal N(\hat{\boldsymbol{\beta}})&
\leq \frac 1D |\!|\boldsymbol{y}-X\boldsymbol{\alpha}|\!|_2^2+
\lambda\mathcal N({\boldsymbol{\alpha}})\\ 
\Rightarrow & \frac 1D |\!|X(\boldsymbol{\beta}-\hat{\boldsymbol{\beta}})|\!|_2^2&\leq 
\frac 1D|\!|X(\boldsymbol{\beta}-\boldsymbol{\alpha})|\!|_2^2+
\frac 2D \langle \boldsymbol{U}, X(\hat{\boldsymbol{\beta}}-\boldsymbol{\alpha})\rangle
+\lambda\mathcal N(\boldsymbol{\alpha})
-\lambda\mathcal N(\hat{\boldsymbol{\beta}})\\ & &\leq \frac 1D|\!|X(\boldsymbol{\beta}-
\boldsymbol{\alpha})|\!|_2^2+\frac 2D \langle 
X^\top\boldsymbol{U}, \hat{\boldsymbol{\beta}}-\boldsymbol{\alpha}
\rangle+\lambda\mathcal N(\boldsymbol{\alpha})
-\lambda\mathcal N(\hat{\boldsymbol{\beta}})\end{eqnarray*} for 
all $\boldsymbol{\alpha} \in \mathbb R^{ML}$. By the Cauchy-Schwarz inequality, 
this implies \begin{eqnarray*}
\frac 1D|\!|X(\boldsymbol{\beta}-\hat{\boldsymbol{\beta}})|\!|_2^2 
&\leq & \frac 1D|\!|X(\boldsymbol{\beta}-
\boldsymbol{\alpha})|\!|_2^2+\frac 2D \mathcal N_\star(X^\top
\boldsymbol{U})\mathcal N (\hat{\boldsymbol{\beta}}-\boldsymbol{\alpha})
+\lambda(\mathcal N ({\boldsymbol{\alpha}})
-\mathcal N (\hat{\boldsymbol{\beta}}))\\ & \leq & \frac 1D|\!|X(\boldsymbol{\beta}-
\boldsymbol{\alpha})|\!|_2^2+\lambda\mathcal N (\hat{\boldsymbol{\beta}}
-{\boldsymbol{\alpha}})+\lambda(\mathcal N ({\boldsymbol{\alpha}})-
\mathcal N (\hat{\boldsymbol{\beta}}))\end{eqnarray*} since $\lambda\geq 
\frac 2D\mathcal N_\star(X^\top\boldsymbol{U})$. Finally, 
the triangle inequality $\mathcal N (\hat{\boldsymbol{\beta}}
-{\boldsymbol{\alpha}})\leq \mathcal N (\hat{\boldsymbol{\beta}})
+\mathcal N ({\boldsymbol{\alpha}})$ establishes the 
first part of the theorem.

In the following, we will choose a slightly different threshold for the tuning 
parameter $\lambda$, as proposed in the statement of 
theorem. Let $\boldsymbol{v}\deq \boldsymbol{\beta}+
\hat{\boldsymbol{\beta}}$, and notice that the true lag $L_0$ satisfies
$$L_0=
\max\{\ell\in[L]:|\!|\boldsymbol{\beta}|\!|_{{\mathcal G}_\ell}
\neq 0\}\,.$$ We have 
\begin{align}\mathcal N ({\boldsymbol{\beta}})
-\mathcal N (\hat{\boldsymbol{\beta}})&=
\mathcal N ({\boldsymbol{\beta}})
-\mathcal N (\boldsymbol{\beta}+\boldsymbol{v})\nonumber\\
&=\sum_{\ell=1}^{L_0}\sqrt{|{\mathcal G}_\ell|}\cdot
|\!|\boldsymbol{\beta}|\!|_{{\mathcal G}_\ell}+\sum_{\ell=L_0+1}^L\sqrt{|{\mathcal G}_\ell|}\cdot
|\!|\cancelto{0}{\boldsymbol{\beta}}|\!|_{{\mathcal G}_\ell}\nonumber\\ 
&\hspace{1cm} -\sum_{\ell=1}^{L_0}\sqrt{|{\mathcal G}_\ell|}\cdot
|\!|\boldsymbol{\beta}+\boldsymbol{v}|\!|_{{\mathcal G}_\ell}-\sum_{\ell=L_0+1}^L\sqrt{|{\mathcal G}_\ell|}\cdot
|\!|\cancelto{0}{\boldsymbol{\beta}}+\boldsymbol{v}|\!|_{{\mathcal G}_\ell}\nonumber\\
&=\sum_{\ell=1}^{L_0}\sqrt{|{\mathcal G}_\ell|}\cdot
|\!|\boldsymbol{\beta}|\!|_{{\mathcal G}_\ell}-\sum_{\ell=1}^{L_0}\sqrt{|{\mathcal G}_\ell|}\cdot
|\!|\boldsymbol{\beta}+\boldsymbol{v}|\!|_{{\mathcal G}_\ell}-\sum_{\ell=L_0+1}^L\sqrt{|{\mathcal G}_\ell|}\cdot
|\!|\boldsymbol{v}|\!|_{{\mathcal G}_\ell}\nonumber\\
&\leq \sum_{\ell=1}^{L_0}\sqrt{|{\mathcal G}_\ell|}\cdot
|\!|\boldsymbol{v}|\!|_{{\mathcal G}_\ell}-\sum_{\ell=L_0+1}^L\sqrt{|{\mathcal G}_\ell|}\cdot
|\!|\boldsymbol{v}|\!|_{{\mathcal G}_\ell},
\end{align} 
where the last step is due to the triangle 
inequality: $|\!|\boldsymbol{\beta}+\boldsymbol{v}|\!|_{{\mathcal G}_\ell}
\geq |\!|\boldsymbol{\beta}|\!|_{{\mathcal G}_\ell}-
|\!|\boldsymbol{v}|\!|_{{\mathcal G}_\ell}$. Suppose that $\lambda\geq \frac 4D\mathcal N_\star(X^\top\boldsymbol{U})$. 
Using the fact that $\hat{\boldsymbol{\beta}}$ is a 
minimizer of the objective $\frac 1D|\!|\boldsymbol{y}-X\boldsymbol{\alpha}|\!|_2^2+
\lambda\mathcal N({\boldsymbol{\alpha}})
$, over $\boldsymbol{\alpha} 
\in \mathbb R^{ML}$, one derives \begin{eqnarray*}
&\frac 1D|\!|\boldsymbol{y}-X\hat{\boldsymbol{\beta}}|\!|_2^2+
\lambda\mathcal N(\hat{\boldsymbol{\beta}})&
\leq \frac 1D|\!|\boldsymbol{y}-X\boldsymbol{\beta}|\!|_2^2+
\lambda\mathcal N({\boldsymbol{\beta}})\\ 
\Rightarrow & \frac 1D|\!|X(\boldsymbol{\beta}-\hat{\boldsymbol{\beta}})|\!|_2^2&\leq 
\frac 2D \langle X^\top\boldsymbol{U}, \hat{\boldsymbol{\beta}}-\boldsymbol{\beta}
\rangle+\lambda\mathcal N(\boldsymbol{\beta})
-\lambda\mathcal N(\hat{\boldsymbol{\beta}})\,.\end{eqnarray*} By the
Cauchy-Schwarz inequality, 
this implies \begin{align}\label{eq:16-08-sc1}
\frac 1D|\!|X(\boldsymbol{\beta}-\hat{\boldsymbol{\beta}})|\!|_2^2 &
\leq \frac 2D \mathcal N_\star(X^\top
\boldsymbol{U})\mathcal N (\hat{\boldsymbol{\beta}}-\boldsymbol{\beta})
+\lambda(\mathcal N ({\boldsymbol{\beta}})
-\mathcal N (\hat{\boldsymbol{\beta}}))\nonumber\\ & \leq \frac \lambda2
\mathcal N (\hat{\boldsymbol{\beta}}
-{\boldsymbol{\beta}})+\lambda(\mathcal N ({\boldsymbol{\beta}})-
\mathcal N (\hat{\boldsymbol{\beta}}))\hspace{1cm}
\because~ \lambda\geq 
\frac 4D\mathcal N_\star(X^\top\boldsymbol{U})\nonumber\\ 
&\leq \frac{\lambda}2\min\left\{3\mathcal N(\boldsymbol{\beta}
-\hat{\boldsymbol{\beta}}), 3\mathcal N(\boldsymbol{\beta})
-\mathcal N(\hat{\boldsymbol{\beta}})\right\}
\,.\end{align}
The inequalities 
$$0\leq \frac 2{\lambda D}|\!|X(\boldsymbol{\beta}-\hat{\boldsymbol{\beta}})|\!|_2^2
\leq \mathcal N (\hat{\boldsymbol{\beta}}
-{\boldsymbol{\beta}})+2(\mathcal N ({\boldsymbol{\beta}})-
\mathcal N (\hat{\boldsymbol{\beta}}))\,,$$ and 
$$\mathcal N ({\boldsymbol{\beta}})
-\mathcal N (\hat{\boldsymbol{\beta}})\leq \sum_{\ell=1}^{L_0}\sqrt{|{\mathcal G}_\ell|}\cdot
|\!|\boldsymbol{v}|\!|_{{\mathcal G}_\ell}-\sum_{\ell=L_0+1}^L\sqrt{|{\mathcal G}_\ell|}\cdot
|\!|\boldsymbol{v}|\!|_{{\mathcal G}_\ell}$$
yield the following: 
\begin{align*}
0&\leq  \mathcal N (\hat{\boldsymbol{\beta}}
-{\boldsymbol{\beta}})+2(\mathcal N ({\boldsymbol{\beta}})-
\mathcal N (\hat{\boldsymbol{\beta}}))\\
&\leq\mathcal N (\hat{\boldsymbol{v}})+
2\sum_{\ell=1}^{L_0}\sqrt{|{\mathcal G}_\ell|}\cdot
|\!|\boldsymbol{v}|\!|_{{\mathcal G}_\ell}-2\sum_{\ell=L_0+1}^L\sqrt{|{\mathcal G}_\ell|}\cdot
|\!|\boldsymbol{v}|\!|_{{\mathcal G}_\ell}\\
&=3\sum_{\ell=1}^{L_0}\sqrt{|{\mathcal G}_\ell|}\cdot
|\!|\boldsymbol{v}|\!|_{{\mathcal G}_\ell}-\sum_{\ell=L_0+1}^L\sqrt{|{\mathcal G}_\ell|}\cdot
|\!|\boldsymbol{v}|\!|_{{\mathcal G}_\ell}
\,.\end{align*} That is, we have
\begin{align}\label{eq:18-10-sc3}
\sum_{\ell=L_0+1}^L\sqrt{|{\mathcal G}_\ell|}\cdot
|\!|\boldsymbol{\beta}-\boldsymbol{\hat\beta}|\!|_{{\mathcal G}_\ell}&\leq 
3\sum_{\ell=1}^{L_0}\sqrt{|{\mathcal G}_\ell|}\cdot
|\!|\boldsymbol{\beta}-\boldsymbol{\hat\beta}|\!|_{{\mathcal G}_\ell}
\,.\end{align} By equation \eqref{eq:16-08-sc1}, we then have 
\begin{align}\label{eq:18-10-sc401}
\frac 1D|\!|X(\boldsymbol{\beta}-\hat{\boldsymbol{\beta}})|\!|_2^2 &
\leq 2\lambda\sum_{\ell=1}^{L_0}\sqrt{|{\mathcal G}_\ell|}\cdot
|\!|\boldsymbol{\beta}-\boldsymbol{\hat\beta}|\!|_{{\mathcal G}_\ell}\\
&=2\lambda{\mathcal N}_{\leq L_0}
(\boldsymbol{\beta}-\boldsymbol{\hat\beta})\,.\nonumber\end{align}
\end{proof}

\begin{remark}[A]\label{rem:bounds}
If, instead of $\mathcal N$, we take standard $\ell^2$ in $\mathbb R^{ML}$ (which correspond to the largest group in $\mathcal N$), the argument leading to \eqref{eq:18-10-sc3} yields \begin{align*}\sum_{\ell=L_0+1}^L|\boldsymbol{\hat\beta}^m_\ell|&=\sum_{\ell=L_0+1}^L|\boldsymbol{\beta}^m_\ell-\boldsymbol{\hat\beta}^m_\ell|\\ &\leq 
3\sum_{\ell=1}^{L_0}|\boldsymbol{\beta}^m_\ell-\boldsymbol{\hat\beta}^m_\ell|\,.\end{align*}
\end{remark}
\bigskip

\section{Proof of Theorem \ref{lem:main1}}\label{lem:main1:proof}
\begin{proof}
Since Lemma \ref{lem:2-9-sc2} implies 
\begin{eqnarray*}{\mathcal N}_\star
\left(\frac 2DX^\top\boldsymbol{U}\right) &
\leq & \frac 1{\sqrt L}\left\Vert \frac 2DX^\top\boldsymbol{U}\right\Vert_\infty\\ 
&= & \frac 1{\sqrt L}\max_{m\in[M]}\max_{l\in [L]}\left\vert\frac 2D
\sum_{t=1}^{T_m} U^m_tX^m_{t-l}\right\vert
\,,\end{eqnarray*} it suffices, by an 
union bound argument, to find high-probability upper bound of the inner-product 
(see \eqref{notn11} for notations) \begin{eqnarray}\label{eq:4}\tau_{m,l}&\deq2\langle \boldsymbol{U}^m,X^{m,l}
\rangle&=2\left(\sum_{t=1}^{T_m}
U^m_tX^m_{t-l}\right)\,.\end{eqnarray} We now present standard techniques, as in \cite{MR3357870}, for example. One has 
\begin{eqnarray*}\tau_{m,l}&=&\left\Vert\boldsymbol{U}^{(m)}+X^{(m,l)}
\right\Vert^2_2-\left\Vert\boldsymbol{U}^{(m)}\right\Vert_2^2-
\left\Vert X^{(m,l)}\right\Vert^2_2\,.\end{eqnarray*} Hence, for any 
$\eta>0$, an union bound argument, together with mutual independence of the 
Gaussian random variable $U^m_j$ 
and the variables $X^m_{<j}$ for each $j\in [T_m]$, implies \begin{eqnarray*}&& \mathbb P
\left[\left\vert\frac{\tau_{m,l}}D\right\vert>\frac 32\eta\right] \\ &\leq & 
\mathbb P\left[\left\vert(\boldsymbol{U}^{(m)})^\top
(\boldsymbol{U}^{(m)})-\vare(U^m)\right\vert>{\frac D2\eta}\right]+
\mathbb P\left[\left\vert(X^{(m,l)})^\top
(X^{(m,l)})-\vare(X^m)\right\vert>{\frac D2\eta}\right]\\ 
&& \hspace{2.85cm}+~\mathbb P
\left[\left\vert(\boldsymbol{U}^{(m)}+X^{(m,l)})^\top
(\boldsymbol{U}^{(m)}+X^{(m,l)})-\vare(X^m+
U^m)\right\vert>{\frac D2\eta}\right]\,.\end{eqnarray*} Here, 
$\vare(Y)$ denotes the trace of the covariance matrix of 
the random variable $Y$. We 
now estimate each summand separately, starting with $$p_{m,l,\eta}\deq 
\mathbb P\left[\left\vert(\boldsymbol{U}^{(m)})^\top
(\boldsymbol{U}^{(m)})-\vare(U^{(m)})\right\vert>\frac D2\eta\right]\,.$$ By the running assumptions, 
the vector $\boldsymbol{U}^{(m)}$ is $T_m$-dimensional 
mean zero Gaussian having covariance matrix 
$\sigma_m^2\mathbb I_{T_m}$; by the inequality in 
proposition \ref{cor:25-10-sc20}, one has \begin{align}\label{eq:4.6}
p_{m,l,\eta} &\leq 2e^{-\frac{c_0D\eta}{8\sigma_m^2}\min\{1, \frac{D\eta}{8\sigma_m^2T_m}\}}\,. 
\end{align} Next, we 
consider $$q_{m,l,\eta}\deq
\mathbb P\left[\left\vert(\boldsymbol{X}^{(m,l)})^\top
(\boldsymbol{X}^{(m,l)})-\vare(X^{(m,l)})\right\vert>\frac D2\eta\right]\,.$$ 
Recall that the random vector $X^{(m,l)}$ is mean-zero Gaussian with covariance 
matrix $\Gamma^{(m)}$ mentioned in equation \eqref{eq:06} below. One 
has $$\vare(X^{(m,l)})=\tr(\Gamma^{(m)})=T_m\mathbb E[(X_0^m)^2]\,.$$ 
From lemma \ref{lem:06-09-sc1}, one has $|\!|\Gamma^{(m)}|\!|_{\oop}
\leq \epsilon^{-2}\sigma_m^2$; thus, proposition \ref{cor:25-10-sc20} yields 
\begin{align}\label{eq:41.7}
q_{m,l,\eta} &\leq 2e^{-\frac{c_0D\eta}{8\sigma_m^2\epsilon^{-2}}\min\{1, \frac{D\eta}{8\sigma_m^2T_m\epsilon^{-2}}\}} 
\,.\end{align} Finally, we 
consider $$r_{m,l,\eta}\deq\mathbb P
\left[\left\vert(\boldsymbol{U}^{(m)}+X^{(m,l)})^\top
(\boldsymbol{U}^{(m)}+X^{(m,l)})-\vare(X^m+
U^m)\right\vert>\frac D2\eta\right]\,.$$ Note 
that $\boldsymbol{U}^{(m)}+X^{(m,l)}$ is a mean zero Gaussian with 
symmetric covariance matrix $\tilde{\Gamma}^{(m)}$, whose $(t,s)$-entry (for $t\geq s$) 
is given by \begin{align*}\tilde{\Gamma}^{(m)}_{t,s}
&\deq\mathbb E[(X^m_{t-l}+U^m_t)(X^m_{s-l}+U^m_s)]\\
&=\mathbb E[(X^m_{t-l}X^m_{s-l}]+\mathbb E[X^m_{s-l}U^m_t]
+\mathbb E[X^m_{t-l}U^m_s]+\mathbb E[U^m_tU^m_s]\\
&=\mathbb E[(X^m_{t-l}X^m_{s-l}]
+\mathbb E[X^m_{t-l}U^m_s]+\sigma_m^2\boldsymbol{1}_{t=s}\,.\end{align*} Together 
with lemma \ref{lem:06-09-sc2}, this 
implies \begin{align*}|\!|\tilde{\Gamma}^{(m)}|\!|_{\oop}&
\leq \epsilon^{-2}\sigma_m^2+\epsilon^{-4}\sigma_m^2+\sigma_m^2\\
&=\sigma_m^2(1+\epsilon^{-2}+\epsilon^{-4})\,,\end{align*} and by 
proposition \ref{cor:25-10-sc20}, we fetch \begin{align}\label{eq:41.117}
r_{m,l,\eta} &\leq 2e^{-\frac{c_0D\eta}{8\sigma_m^2(1+\epsilon^{-2}+\epsilon^{-4})}\min\{1, \frac{D\eta}{8\sigma_m^2T_m(1+\epsilon^{-2}+\epsilon^{-4})}\}} 
\,.\end{align} Now suppose that 
$$\eta\geq \frac{8C_\sharp \sigma^2_{\max}(1+\epsilon^{-2}+\epsilon^{-4})
}{M}\,;$$ 
then, the inequalities in \eqref{eq:4.6}, \eqref{eq:41.7}, and \eqref{eq:41.117} 
are simplified as follows: 
\begin{equation}\label{eq:4.66}
\begin{aligned}
    p_{m,l,\eta} &\leq 2e^{-\frac{c_0D\eta}{8\sigma_m^2}}\,;\\ 
 q_{m,l,\eta} &\leq 2e^{-\frac{c_0D\eta}{8\sigma_m^2\epsilon^{-2}}}\;,\\ 
 r_{m,l,\eta} &\leq 2e^{-\frac{c_0D\eta}{8\sigma_m^2(1+\epsilon^{-2}+\epsilon^{-4})}}\,. 
\end{aligned}
\end{equation}
Of these, the right hand side is the largest in the bottom-most inequality \eqref{eq:4.66}. 
Note that, the union bound implies \begin{eqnarray*}\mathbb P
\left[\sup_{m,l}\left\vert\frac{\tau_{m,l}}D\right\vert>\frac 32\eta\right] &
\leq & \displaystyle\sum_{m,l}\mathbb P
\left[\left\vert\frac{\tau_{m,l}}D\right\vert>\frac 32\eta\right]\\ 
&\leq & \displaystyle\sum_{l\in [L]}\displaystyle\sum_{m\in [M]}\mathbb P
\left[\left\vert\frac{\tau_{m,l}}D\right\vert>\frac 32\eta\right]\\
\eqref{eq:4.66}~
\Rightarrow\hspace{1cm}&\leq & 6\displaystyle\sum_{l\in [L]}
\displaystyle\sum_{m\in [M]} e^{-\frac{c_0D\eta}{8\sigma_m^2(1+\epsilon^{-2}+\epsilon^{-4})}}\,.\end{eqnarray*} Thus, for any $\delta>0$, in order to have the inequality 
\begin{eqnarray*}\mathbb P\left[{\mathcal N}_\star
\left(\frac 2DX^\top\boldsymbol{U}\right)> 
\frac{3\eta}{2\sqrt L}\right]&\leq \delta\,,\end{eqnarray*} it suffices 
to have \begin{equation}\exp\left(-\frac{c_0D\eta}
{8\sigma_m(1+\epsilon^{-2}+\epsilon^{-4})}\right)\leq \frac{\delta}{ML}\,\end{equation} 
for each $m\in [M]$. Equivalently, it suffices to have the following for each $m\in [M]$: 
\begin{align*}{\frac{c_0D\eta}{8\sigma_m^2
(1+\epsilon^{-2}+\epsilon^{-4})}}&\geq \log\left(\frac{ML}{\delta}\right)\\ 
\mbox{equivalently,}\hspace{1cm}D&\geq \frac{8\sigma_m^2
(1+\epsilon^{-2}+\epsilon^{-4})}{c_0\eta}\log\left(\frac{ML}{\delta}\right)\,.\end{align*} 
This is equivalent to \begin{align}\label{disco}D&\geq \frac{8
\sigma_{\max}^2(1+\epsilon^{-2}+\epsilon^{-4})}{c_0\eta}\log\left(\frac{ML}{\delta}\right)\,.\end{align} It then follows 
from the argument above that 
\begin{equation}\mathbb P
\left[\frac 2D{\mathcal N}_\star(X
^\top\boldsymbol{U})
\geq \frac{3\eta}{2\sqrt L}\right]~\leq~ \delta\end{equation} holds, provided 
\begin{align*}D&\geq \frac{8
\sigma_{\max}^2(1+\epsilon^{-2}+\epsilon^{-4})}{c_0\eta}\log\left(\frac{ML}{\delta}\right)\,.\end{align*}
\end{proof}

\section{Proof of Proposition \ref{sc-07-11-1}}\label{sc-07-11-1:proof}
We first observe what happens in the $M=1$ case: this amounts to 
restricting $X$ to the top left block $$X_1\deq
\begin{pmatrix}x_{1-1}^1&\dots&x_{1-L}^1\\
\cdot & \dots & \cdot\\ \cdot & \dots & \cdot\\ 
x_{T_1-1}^1&\dots&x_{T_1-L}^1\end{pmatrix}\,.$$ 

\begin{lemma}[Blockwise concentration inequality]\label{0somelemma0}
For any integer $s_1>0$, write $\mathbb K(s_1)\deq\mathbb B_0(s_1)\cap \mathbb B_1(1)$. 
Then \begin{align}\label{sc-05-10-1}&\mathbb P\left[
\sup_{\boldsymbol{v}_1\in \mathbb K(s_1) 
}\left\vert \boldsymbol{v}_1^\top 
(X_1^\top X_1-T_1\Gamma^{(1)})\boldsymbol{v}_1
\right\vert\geq \frac{T_1^2\zeta_1\Lambda_{\max}(\Sigma_{\boldsymbol{U}_1})}
{2\mathfrak m(f)}\right]\nonumber\\ &\leq
2\exp\left(-\frac{T_1\min\{\zeta_1,\zeta_1^2\}}{2}+s_1\min\{\log L,~\log(21eL/s_1)\}\right)\,.\end{align}
\end{lemma}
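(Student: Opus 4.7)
The plan is to establish this blockwise restricted eigenvalue bound via the standard discretization-plus-concentration strategy used in \cite{MR3357870}. First, since $\mathbb{K}(s_1) = \mathbb{B}_0(s_1) \cap \mathbb{B}_1(1)$ consists of $s_1$-sparse vectors of $\ell^2$-norm at most $1$, I would decompose the supremum over $\mathbb{K}(s_1)$ according to the support: for each subset $S \subset [L]$ with $|S|=s_1$, the restriction of $\mathbb{K}(s_1)$ to vectors supported on $S$ is contained in the unit Euclidean ball of $\mathbb{R}^S$. On each such ball, I would construct a $1/7$-net $\mathcal{N}_S$ of cardinality at most $21^{s_1}$ by the standard volumetric packing estimate, and apply the classical discretization lemma relating $\sup_{v \in \mathbb{B}_2^S} |v^\top A v|$ to $\max_{v \in \mathcal{N}_S} |v^\top A v|$ up to a universal constant.

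Next, for each fixed unit vector $v_1$ supported on $S$, the quantity $v_1^\top(X_1^\top X_1 - T_1 \Gamma^{(1)}) v_1$ is a centered Gaussian chaos of order two, since $X^{(1,l)}$ is Gaussian. I would apply the Hanson--Wright-type concentration inequality (Proposition~\ref{cor:25-10-sc20} as invoked in the proof of Theorem~\ref{lem:main1}) to obtain a tail bound of the form $2\exp(-c \min\{t^2/\|\Gamma^{(1)}\|_{\mathrm{op}}^2, t/\|\Gamma^{(1)}\|_{\mathrm{op}}\})$. The key input here is that, by the spectral representation used for stable processes (see Lemma~\ref{lem:06-09-sc1} as cited in Appendix~\ref{lem:main1:proof}), one has $\|\Gamma^{(1)}\|_{\mathrm{op}} \leq \Lambda_{\max}(\Sigma_{U_1}) / \mathfrak{m}(f)$, which is the exact combination appearing in the threshold on the right-hand side.

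Finally, I would take the union bound over all $\binom{L}{s_1}$ supports and all $\leq 21^{s_1}$ elements of each net. For the combinatorial factor, I would use either the naive estimate $\binom{L}{s_1} \cdot 21^{s_1} \leq L^{s_1} \cdot 21^{s_1}$, which after absorbing constants yields the $s_1 \log L$ term in the exponent, or the sharper estimate $\binom{L}{s_1} \leq (eL/s_1)^{s_1}$, giving the $s_1 \log(21eL/s_1)$ term; retaining the minimum of the two gives the stated bound. Setting $t = T_1^2 \zeta_1 \Lambda_{\max}(\Sigma_{U_1})/(2\mathfrak{m}(f))$ and tracking how this threshold interacts with the $\min\{\cdot,\cdot\}$ in the Hanson--Wright exponent produces the claimed $\min\{\zeta_1,\zeta_1^2\}$ behavior (the dichotomy reflecting the sub-exponential vs.\ sub-Gaussian regimes).

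The main obstacle I expect is the careful bookkeeping of constants: the interplay of the net parameter (here $1/7$, dictating the factor $21$ in the cardinality bound), the discretization constant relating the supremum to the net maximum, and the concentration threshold must all be chosen consistently so that the right-hand side comes out with the precise constant $\tfrac{1}{2}$ in front of $T_1 \min\{\zeta_1,\zeta_1^2\}$. Also, transferring between the $\ell^1$ constraint (coming from the polytope $\mathbb{B}_1(1)$) and the $\ell^2$ unit ball on each support requires the standard observation that $\|v\|_1 \leq \sqrt{s_1}\|v\|_2$ for $s_1$-sparse $v$, which will absorb cleanly into the constants without affecting the stated exponent.
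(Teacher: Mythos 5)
Your proposal is correct and follows essentially the same route as the paper: Hanson--Wright (Proposition~\ref{cor:25-10-sc20}) applied to the Gaussian vector $X_1\boldsymbol{v}_1$ for a fixed sparse unit vector, the spectral-density bound of Lemma~\ref{lem:06-09-sc1} to control the relevant operator norm, and a sparse-net union bound to pass to the supremum over $\mathbb K(s_1)$. The only cosmetic differences are that you unroll the covering/union-bound step by hand where the paper invokes it as a packaged discretization lemma (Lemma~\ref{lem:08-10-sc2}), and that the operator norm entering Hanson--Wright is that of the covariance of $X_1\boldsymbol{v}_1$ (a $T_1\times T_1$ matrix) rather than of $\Gamma^{(1)}$ itself, though both are bounded by $\Lambda_{\max}(\Sigma_{\boldsymbol{U}_1})/\mathfrak m(f)$ via the same lemma.
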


\begin{proof}[Proof of Lemma \ref{0somelemma0}]
Let $\boldsymbol{v}_1\in\mathbb R^L$ be a fixed 
unit-normed vector. 
Then, $\boldsymbol{u}_1\deq X_1\boldsymbol{v}_1$ is a 
random mean-zero Gaussian vector in $\mathbb R^{T_1}$, 
with covariance matrix $\Sigma_{\boldsymbol{v}_1}^{X_1}\deq \mathbb E\left[
X_1\boldsymbol{v}_1\boldsymbol{v}_1^\top X_1^\top\right]$. One has 
\begin{align*}(\Sigma_{\boldsymbol{v}_1}^{X_1})_{r,s}
&=\sum_{j,l=1}^L\mathbb E\left[X^1_{r-j}\boldsymbol{v}_{1,j}
\boldsymbol{v}_{1,l}X^1_{s-l}
\right]\\ &=\sum_{j,l=1}^L\boldsymbol{v}_{1,j}
\mathbb E\left[X^1_{r-j}X^1_{s-l}\right]\boldsymbol{v}_{1,l}
\\ &=\boldsymbol{v}_1^\top \Gamma_{r,s}\boldsymbol{v}_1\,,
\end{align*} 
where $\Gamma_{r,s}$ is the covariance of the vectors 
$X^{(1)}_r$ and $X^{(1)}_s$. 
Note that \begin{align*}\tr(\Sigma_{\boldsymbol{v}_1}^{X_1})
&=\mathbb E\left[\tr(X_1\boldsymbol{v}_1
\boldsymbol{v}_1^\top X_1^\top)\right]\\
&=T_1\mathbb E\left[(X_1\boldsymbol{v}_1)_{11}^2\right]\\
&=T_1\boldsymbol{v}_1^\top\Gamma^{(1)}
\boldsymbol{v}_1\,,\end{align*} where $\Gamma^{(1)}$ is 
the autocovariance matrix of $\{X^1_t\}$. 
Moreover, one has 
by Lemma \ref{lem:06-09-sc1}, the inequality 
\begin{align*}|\!|\Sigma_{\boldsymbol{v}_1}^{X_1}|\!|_{\oop}
\leq \frac{\Lambda_{\max}(\Sigma_{\boldsymbol{U}_1})}
{\mathfrak m(f)}\,.\end{align*} 
By the inequality in Proposition \ref{cor:25-10-sc20}, 
one has \begin{align*}
\mathbb P\left[\left\vert \boldsymbol{u}_1^T\boldsymbol{u}_1-\tr(\Sigma_{\boldsymbol{v}_1}^{X_1})
\right\vert\geq 4T_1\zeta_1|\!|\Sigma_{\boldsymbol{v}_1}^{X_1}|\!|_{\oop}\right]& \leq   
2e^{-\frac{T_1}2\min\{\zeta_1,\zeta_1^2\}}\end{align*} for any $\zeta_1>
0$. In particular, this implies that for any fixed $\boldsymbol{v}_1\in\mathbb R^L$, 
the inequality \begin{align}\label{sc-04-10-2}\mathbb P\left[
\left\vert \boldsymbol{v}_1^\top (X_1^\top X_1-T_1\Gamma^{(1)})\boldsymbol{v}_1
\right\vert\geq \frac{4T_1\zeta_1\Lambda_{\max}(\Sigma_{\boldsymbol{U}_1})}
{\mathfrak m(f)}\right]& \leq   
2e^{-\frac{T_1}{2}\min\{\zeta_1,\zeta_1^2\}}\end{align} holds for any $\zeta_1>0$. 
Applying Lemma \ref{lem:08-10-sc2} with $G\deq X_1^\top X_1-T_1\Gamma^{(1)}$, 
we conclude that \begin{align}\label{sc-04-10-3}&\mathbb P\left[
\sup_{\boldsymbol{v}_1\in\mathbb K(s_1)}\left\vert \boldsymbol{v}_1^\top 
(X_1^\top X_1-T_1\Gamma^{(1)})\boldsymbol{v}_1
\right\vert\geq \frac{4T_1\zeta_1\Lambda_{\max}(\Sigma_{\boldsymbol{U}_1})}
{\mathfrak m(f)}\right]\nonumber\\ \leq &   
2\exp\left(-\frac{T_1\min\{\zeta_1,\zeta_1^2\}}{2}+s_1\min\{\log L,~\log(21eL/s_1)\}\right)\end{align} holds for any 
integer $s_1\geq 1$, and any $\zeta_1>0$.
\end{proof}

\subsection*{Proof of Proposition \ref{sc-07-11-1}}
\begin{proof}
When $s_1>0$ is even, it follows from Lemma \ref{lem:08-10-sc3} that 
\begin{align*}&\mathbb P\left[
\sup_{\boldsymbol{v}_1\in\mathbb R^L}\left\vert\boldsymbol{v}_1(X^\top_1X_1-
T_1\Gamma^{(1)})\boldsymbol{v}_1\right\vert
\leq 
\frac{108T_1\zeta_1\Lambda_{\max}(\Sigma_{\boldsymbol{U}_1})}
{\mathfrak m(f)}\left(|\!|\boldsymbol{v}_1|\!|_2^2
+\frac 2{s_1}|\!|\boldsymbol{v}_1|\!|_1^2\right)\right]\\ \geq~ &   
1-2\exp\left(-\frac{T_1\min\{\zeta_1,\zeta_1^2\}}{2}+s_1\min\{\log L,~\log(21eL/s_0)\}\right)\,.\end{align*} An 
application of Lemma \ref{lem:06-09-sc1} implies that for 
$$\delta\deq 2e^{-\frac{T_1\min\{\zeta_1,\zeta_1^2\}}{2}+s_1\min\{\log L,~\log(21eL/s_1)\}}\,,$$ 
the conclusion of the following sequence of inequalities holds for all $\boldsymbol{v}_1\in
\mathbb R^L$ with probability at least $1-\delta$: 
\begin{align}\label{sc-11-10-1}
|\!|X_1\boldsymbol{v}_1|\!|_2^2
&\geq T_1\lambda_{\min}(\Gamma^{(1)})|\!|\boldsymbol{v}_1|\!|_2^2
-\frac{108T_1\zeta_1\Lambda_{\max}(\Sigma_{\boldsymbol{U}_1})}
{\mathfrak m(f)}\left(|\!|\boldsymbol{v}_1|\!|_2^2
+\frac 2{s_1}|\!|\boldsymbol{v}_1|\!|_1^2\right)\nonumber\\
&\geq \frac{T_1\Lambda_{\min}(\Sigma_{\boldsymbol{U}_1})}
{\mathfrak M(f)}|\!|\boldsymbol{v}_1|\!|_2^2-\frac{108T_1\zeta_1\Lambda_{\max}(\Sigma_{\boldsymbol{U}_1})}
{\mathfrak m(f)}\left(|\!|\boldsymbol{v}_1|\!|_2^2
+\frac 2{s_1}|\!|\boldsymbol{v}_1|\!|_1^2\right)\nonumber\\
&=T_1\left(\frac{\Lambda_{\min}(\Sigma_{\boldsymbol{U}_1})}
{\mathfrak M(f_1)}-\frac{108\zeta_1\Lambda_{\max}(\Sigma_{\boldsymbol{U}_1})}
{\mathfrak m(f_1)}\right)|\!|\boldsymbol{v}_1|\!|_2^2-
\frac{216T_1\zeta_1\Lambda_{\max}(\Sigma_{\boldsymbol{U}_1})}
{s_1\mathfrak m(f_1)}|\!|\boldsymbol{v}_1|\!|_1^2\,.\nonumber\end{align} 
We specialize to the case of univariate $\epsilon$-stable autoregressive 
time series, with $\Sigma_{\boldsymbol{U}_1}=\sigma_1^2$, 
and (by $\epsilon$-stability) $$\epsilon^2\leq \mathfrak m(f_1)\leq \mathfrak M(f_1)\leq \epsilon^{-2}.$$ 
Letting $\zeta_1\deq 6^{-3}\epsilon^4$, we obtain 
\begin{eqnarray}\label{eq:sc-11-10-011}&&\mathbb P\left[\inf_{\boldsymbol{v}_1
\in\mathbb R^L}\left(|\!|X_1\boldsymbol{v}_1|\!|_2^2
- \frac{T_1}2\sigma_1^2\epsilon^2\left(|\!|\boldsymbol{v}_1|\!|_2^2-
\frac2{s_1}|\!|\boldsymbol{v}_1|\!|_1^2\right)\right)\geq 0\right]\nonumber\\
&\geq& 1-2e^{-\frac{T_1}2 \zeta_1^2
+s_0\min\{\log L,~\log(21eL/s_0)\}}\,.\end{eqnarray} 
Finally, we consider all the blocks of $X^\top X$; 
the deviation inequality above yields
\begin{eqnarray}\label{eq:121}&&\mathbb P\left[\displaystyle
\forall_{m\in[M]}\inf_{\boldsymbol{v}_m\in\mathbb R^L}
\left(|\!|X_m\boldsymbol{v}_m|\!|_2^2- 
\frac{T_m\sigma^2\epsilon^2}
2\left(|\!|\boldsymbol{v}_m|\!|_2^2-\frac{2}
{s_m}|\!|\boldsymbol{v}_m|\!|_1^2\right)\right)\geq 0\right]\nonumber\\
&\geq& 1-2\sum_{m=1}^Me^{-\frac{T_m\zeta^2}{2}
+s_m\min\{\log L,~\log(21eL/s_m)\}}\,.\end{eqnarray}
This proves the proposition.
\end{proof}

\medskip





\section{More on Dual Norms}\label{Dua_norm}
Since ${\mathcal N}_\star(\boldsymbol{0})=0$, we 
will assume, in the following computation 
of the dual norm, that $\boldsymbol{\alpha}\neq \boldsymbol{0}$. This ensures ${\mathcal N}_\star(\boldsymbol{\alpha})> 0$ too. Note that $\mathcal N_\star(\boldsymbol{\alpha})
=\mathcal N_\star(\boldsymbol{|\alpha|})$, where $|\boldsymbol
{\alpha}|=(|\alpha_1|,\cdots,|\alpha_L|)$; this is because 
${\mathcal N}$ is invariant under arbitrary 
sign changes of the coordinates of its argument. Thus, 
for $\mathcal N_\star(\boldsymbol{\alpha})=
\boldsymbol{\alpha}\cdot \boldsymbol{\beta}$, we may assume 
(without loss of generality) that $|\boldsymbol{\alpha}|=\boldsymbol{\alpha}$ 
and $|\boldsymbol{\beta}|=\boldsymbol{\beta}$. 

The advantage of the hierarchical group norm ${\mathcal N}(\boldsymbol{\beta})$ --- in the setting of sparse recovery via regularized least square regression --- is that, while it sets any group of parameters to zero --- because of the hierarchy 
in the group structure --- all variables in all groups that appear further down the order of hierarchy are set to zero automatically. Thus, these norms are well-suited for determination of the true lag order in the context of autoregressive process.

In order to use this group norm in our analysis, we need to collect some 
basic facts on the norm. Note that the norm resembles $\ell^1$ at the group level, while within each group it is the $\ell^2$ norm; from this, one might expect that the dual of the norm should ``resemble" the $\ell^\infty$-norm at the group level, and within a group it should be $\ell^2$. We will prove that this intuition goes quite well, in the sense 
that the actual dual norm can be upper-bounded by this mixed 
$\ell^{\infty,2}$ norm.

\begin{lemma}[Dual norm is attained on the boundary]\label{claim:11}
\begin{eqnarray*}{\mathcal N}_\star(\boldsymbol{\alpha})&=&\max_{{\mathcal N}(\boldsymbol{\beta})=1}
\langle \boldsymbol{\alpha},\boldsymbol{\beta}\rangle\,.\end{eqnarray*}
\end{lemma}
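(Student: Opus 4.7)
The plan is to exploit positive homogeneity of the norm $\mathcal{N}$ together with linearity of the functional $\boldsymbol{\beta}\mapsto\langle\boldsymbol{\alpha},\boldsymbol{\beta}\rangle$. Recall from Definition~\ref{dual_def} that $\mathcal{N}_\star(\boldsymbol{\alpha})$ is the optimum over the closed unit $\mathcal{N}$-ball $\{\boldsymbol{\beta}:\mathcal{N}(\boldsymbol{\beta})\leq 1\}$, which is a compact convex set (being the closed unit ball of a norm on the finite-dimensional space $\mathbb{R}^{ML}$). Continuity of the linear functional $\boldsymbol{\beta}\mapsto\langle\boldsymbol{\alpha},\boldsymbol{\beta}\rangle$ on this compact set guarantees that the supremum is attained at some $\boldsymbol{\beta}^\star$; so $\mathcal{N}_\star(\boldsymbol{\alpha})=\langle\boldsymbol{\alpha},\boldsymbol{\beta}^\star\rangle$ with $\mathcal{N}(\boldsymbol{\beta}^\star)\leq 1$.

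First I would dispatch the trivial case $\boldsymbol{\alpha}=\boldsymbol{0}$: then both $\mathcal{N}_\star(\boldsymbol{\alpha})=0$ and $\max_{\mathcal{N}(\boldsymbol{\beta})=1}\langle\boldsymbol{\alpha},\boldsymbol{\beta}\rangle=0$, so the identity is immediate. I therefore assume $\boldsymbol{\alpha}\neq\boldsymbol{0}$, which as noted before the lemma forces $\mathcal{N}_\star(\boldsymbol{\alpha})>0$; indeed, one can exhibit a feasible $\boldsymbol{\beta}_0$ with $\mathcal{N}(\boldsymbol{\beta}_0)\leq 1$ and $\langle\boldsymbol{\alpha},\boldsymbol{\beta}_0\rangle>0$ by taking $\boldsymbol{\beta}_0$ proportional to $\boldsymbol{\alpha}$ (or to its sign pattern). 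In particular $\langle\boldsymbol{\alpha},\boldsymbol{\beta}^\star\rangle>0$, so $\boldsymbol{\beta}^\star\neq\boldsymbol{0}$ and $c\deq\mathcal{N}(\boldsymbol{\beta}^\star)>0$.

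The key step is then to argue $c=1$ by a scaling contradiction. Suppose for contradiction that $c<1$, and set $\tilde{\boldsymbol{\beta}}\deq\boldsymbol{\beta}^\star/c$. Positive homogeneity of the norm yields $\mathcal{N}(\tilde{\boldsymbol{\beta}})=1\leq 1$, so $\tilde{\boldsymbol{\beta}}$ is feasible, while linearity gives
\[
\langle\boldsymbol{\alpha},\tilde{\boldsymbol{\beta}}\rangle=\frac{1}{c}\langle\boldsymbol{\alpha},\boldsymbol{\beta}^\star\rangle>\langle\boldsymbol{\alpha},\boldsymbol{\beta}^\star\rangle=\mathcal{N}_\star(\boldsymbol{\alpha}),
\]
contradicting the optimality of $\boldsymbol{\beta}^\star$. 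Hence $\mathcal{N}(\boldsymbol{\beta}^\star)=1$, showing that the maximum in Definition~\ref{dual_def} is attained on $\{\mathcal{N}(\boldsymbol{\beta})=1\}$, and therefore
\[
\mathcal{N}_\star(\boldsymbol{\alpha})=\max_{\mathcal{N}(\boldsymbol{\beta})=1}\langle\boldsymbol{\alpha},\boldsymbol{\beta}\rangle.
\]
The reverse inequality $\mathcal{N}_\star(\boldsymbol{\alpha})\geq\max_{\mathcal{N}(\boldsymbol{\beta})=1}\langle\boldsymbol{\alpha},\boldsymbol{\beta}\rangle$ is trivial since $\{\mathcal{N}(\boldsymbol{\beta})=1\}\subset\{\mathcal{N}(\boldsymbol{\beta})\leq 1\}$. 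The only mild obstacle is making sure the compactness/attainment step is in place so that one can actually speak of an optimizer $\boldsymbol{\beta}^\star$; this is automatic in finite dimension, so no technical subtlety arises.
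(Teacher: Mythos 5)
Your proposal is correct and follows essentially the same route as the paper's own proof: attainment of the maximum via continuity of the linear functional and compactness of the unit $\mathcal{N}$-ball, followed by a scaling contradiction (the paper scales an interior optimizer up by some $c>1$ with $\mathcal{N}(c\boldsymbol{\beta}_0)\leq 1$, while you divide by $\mathcal{N}(\boldsymbol{\beta}^\star)<1$ --- the same argument). Your explicit handling of the $\boldsymbol{\alpha}=\boldsymbol{0}$ case and of the reverse inclusion are minor tidy additions, not a different method.
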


\begin{proof}[Proof of Lemma \ref{claim:11}]
Note that --- by continuity of $\boldsymbol{\beta}\mapsto 
\langle \boldsymbol{\alpha},\boldsymbol{\beta}\rangle$, compactness 
of the subset $C_{\boldsymbol{\beta}}\deq\{\boldsymbol{\beta}
:{\mathcal N}(\boldsymbol{\beta})\leq 1\}$, and since $
{\mathcal N}_\star(\boldsymbol{\alpha})\neq 0$ --- there 
is nonzero $\boldsymbol{\beta}_0\in C_{\boldsymbol{\beta}}$ such that 
${\mathcal N}_\star(\boldsymbol{\alpha})=\langle 
\boldsymbol{\alpha},\boldsymbol{\beta}_0\rangle$. 
Suppose, if possible, that $\boldsymbol{\beta}_0$ satisfies ${\mathcal N}(\boldsymbol{\beta}_0)<1$. 
In $(0,+\infty)$, the function $c\mapsto {\mathcal N}(c\boldsymbol{\beta}_0)$ 
is strictly increasing (continuous) function; thus, there is 
$c>1$ such that $c{\mathcal N}(\boldsymbol{\beta}_0)=
{\mathcal N}(\boldsymbol{c\beta}_0)\leq 1$; 
one has \begin{align*}\max\{\langle \boldsymbol{\alpha},\boldsymbol{\beta}\rangle
:{{\mathcal N}(\boldsymbol{\beta})\leq 1}\}&\geq \langle\boldsymbol{\alpha},
c\boldsymbol{\beta_0}\rangle\\ &=c\cdot \langle\boldsymbol{\alpha},
\boldsymbol{\beta}_0\rangle \\ & > \langle\boldsymbol{\alpha},
\boldsymbol{\beta}_0\rangle\\ & = {\mathcal N}_\star(\boldsymbol{\alpha})\,,
\end{align*} which is a contradiction.
\end{proof}

In order to facilitate our computations of the dual norm in $\mathbb R^{ML}$, we 
start with the special case of $M=1$; after 
Proposition~\ref{claim:sc-02-09-1}, we will extend this to 
the general case. Now, we define $${\mathcal N}^1(
\boldsymbol{\alpha})\deq\sum_{l=1}^L\sqrt{(L-l+1)\sum_{j=l}^L\alpha_j^2}\,.$$ Let us write $\boldsymbol{\alpha}_{\geq j}=(\boldsymbol{0},\alpha_j,\alpha_{j+1},
\cdots,\alpha_L)$ for $j\in [L]$. From now on, we assume 
that $$\prod_{j\in[L]}\alpha_j\neq 0\,.$$

We have the following proposition. 

\begin{proposition}[$L^\infty$ norm bounds dual norm, $M=1$ case]\label{claim:sc-02-09-1}
The following inequality holds for $\boldsymbol{\alpha}\in\mathbb R^L$: 
\begin{align*}{\mathcal N}^1_\star(\boldsymbol{\alpha})
&\leq L^{-\frac 12}|\!|\boldsymbol{\alpha}|\!|_\infty\,.\end{align*}
\end{proposition}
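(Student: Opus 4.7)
The plan is to bound the dual norm by exploiting the hierarchical structure of the groups, with the largest group $\mathcal{G}_1 = [L]$ playing the key role. First, by Lemma~\ref{claim:11}, I can replace the feasible set in the variational definition of the dual norm by the boundary $\{\boldsymbol{\beta}:\mathcal{N}^1(\boldsymbol{\beta})=1\}$. Combined with the sign-invariance of $\mathcal{N}^1$ noted in the paragraph preceding the statement (and recalled in the discussion just before the proposition), I may assume that $\boldsymbol{\alpha},\boldsymbol{\beta}$ are entrywise nonnegative. It then suffices to show $\langle \boldsymbol{\alpha},\boldsymbol{\beta}\rangle \leq L^{-1/2}\|\boldsymbol{\alpha}\|_\infty$ whenever $\boldsymbol{\beta}\geq 0$ componentwise and $\mathcal{N}^1(\boldsymbol{\beta})=1$.

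Next, I would extract the critical $\ell^2$-bound on $\boldsymbol{\beta}$. Since every summand of $\mathcal{N}^1(\boldsymbol{\beta})=\sum_{l=1}^L\sqrt{L-l+1}\,\|\boldsymbol{\beta}_{\geq l}\|_2$ is nonnegative and the $l=1$ summand alone equals $\sqrt{L}\|\boldsymbol{\beta}\|_2$, one obtains $\sqrt{L}\|\boldsymbol{\beta}\|_2\leq \mathcal{N}^1(\boldsymbol{\beta})=1$, hence $\|\boldsymbol{\beta}\|_2\leq L^{-1/2}$. More generally, for each $l$ one has $\|\boldsymbol{\beta}_{\geq l}\|_2\leq (L-l+1)^{-1/2}$, which is useful for coordinate-by-coordinate control. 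From here, the natural route is Cauchy--Schwarz: $\langle\boldsymbol{\alpha},\boldsymbol{\beta}\rangle\leq\|\boldsymbol{\alpha}\|_2\|\boldsymbol{\beta}\|_2\leq L^{-1/2}\|\boldsymbol{\alpha}\|_2$, which combined with $\|\boldsymbol{\alpha}\|_2\leq\sqrt{L}\|\boldsymbol{\alpha}\|_\infty$ yields the weaker but usable estimate $\mathcal{N}^1_\star(\boldsymbol{\alpha})\leq\|\boldsymbol{\alpha}\|_\infty$.

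The main obstacle is sharpening this by an additional factor of $L^{-1/2}$ to match the stated inequality. The Cauchy--Schwarz step is tight precisely when $\boldsymbol{\beta}$ is proportional to $\boldsymbol{\alpha}$, which is incompatible with $\boldsymbol{\alpha}$ being spread out and $\boldsymbol{\beta}$ being forced (by the nested groups $\mathcal{G}_l$, whose weights penalize later coordinates more heavily) to concentrate its mass on small indices. I would close the gap by an inductive peeling argument on $l=1,\dots,L$: at each step, bound the incremental contribution of the new tail $\boldsymbol{\beta}_{\geq l}$ to $\langle\boldsymbol{\alpha},\boldsymbol{\beta}\rangle$ against the residual budget $1-\sum_{l'<l}\sqrt{L-l'+1}\,\|\boldsymbol{\beta}_{\geq l'}\|_2$, and recombine via a convexity argument. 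An alternative route, which I expect to be cleaner, is a Lagrangian/KKT analysis of the concave program $\max\langle\boldsymbol{\alpha},\boldsymbol{\beta}\rangle$ subject to $\mathcal{N}^1(\boldsymbol{\beta})\leq 1$, showing that the optimizer is supported on a single coordinate---namely the one realizing $\|\boldsymbol{\alpha}\|_\infty$---at which point only the first summand $\sqrt{L}\|\boldsymbol{\beta}\|_2$ of $\mathcal{N}^1$ is active and the bound becomes the equality $\mathcal{N}^1_\star(\boldsymbol{\alpha})=L^{-1/2}\|\boldsymbol{\alpha}\|_\infty$.
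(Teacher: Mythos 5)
Your reduction to the boundary via Lemma~\ref{claim:11}, the sign-invariance step, and the bound $|\!|\boldsymbol{\beta}|\!|_2\leq L^{-1/2}$ extracted from the $l=1$ summand are all fine, and your Cauchy--Schwarz route does rigorously give the weaker estimate $\mathcal N^1_\star(\boldsymbol{\alpha})\leq|\!|\boldsymbol{\alpha}|\!|_\infty$. But the argument stops exactly where the work begins: the extra factor $L^{-1/2}$ is never actually obtained. The ``inductive peeling'' route is only named, not carried out, and the KKT route rests on a claim that is false: the maximizer of $\langle\boldsymbol{\alpha},\boldsymbol{\beta}\rangle$ over $\{\mathcal N^1(\boldsymbol{\beta})\leq 1\}$ is in general \emph{not} supported on the single coordinate realizing $|\!|\boldsymbol{\alpha}|\!|_\infty$. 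Take $L=2$ and $\boldsymbol{\alpha}=(1,1)$, so that $\mathcal N^1(\boldsymbol{\beta})=\sqrt2\sqrt{\beta_1^2+\beta_2^2}+|\beta_2|$. The feasible point $\boldsymbol{\beta}=(1/\sqrt3,\ (2-\sqrt3)/\sqrt3)$ satisfies $\mathcal N^1(\boldsymbol{\beta})=1$ and has both coordinates strictly positive, with $\langle\boldsymbol{\alpha},\boldsymbol{\beta}\rangle=\sqrt3-1\approx 0.732$, whereas $L^{-1/2}|\!|\boldsymbol{\alpha}|\!|_\infty=1/\sqrt2\approx 0.707$. So not only does your proposed closing step fail; the inequality you are trying to prove fails with it, and no completion of the sketch can succeed.

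For comparison, the paper's own proof tries to gain the factor $L^{-1/2}$ by establishing $\mathcal N^1(\boldsymbol{\beta})\geq\sqrt L\,|\!|\boldsymbol{\beta}|\!|_1$, but its first displayed inequality amounts to $(x-y)^2\geq x^2-y^2$ for $x=|\!|\boldsymbol{\beta}_{\geq l}|\!|_2\geq y=|\beta_l|\geq 0$, which is backwards: $(x-y)^2=(x-y)(x-y)\leq(x-y)(x+y)=x^2-y^2$. The same $L=2$ example refutes the intermediate claim as well: $\boldsymbol{\beta}=(1,0.1)$ gives $\mathcal N^1(\boldsymbol{\beta})\approx 1.521<\sqrt2\cdot 1.1\approx 1.556$. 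What survives of both arguments is the correct but weaker bound $\mathcal N^1_\star(\boldsymbol{\alpha})\leq\max_{l\in[L]}|\alpha_l|/\sqrt{L-l+1}\leq|\!|\boldsymbol{\alpha}|\!|_\infty$, which follows from the trivial lower bound $\mathcal N^1(\boldsymbol{\beta})\geq\sum_{l}\sqrt{L-l+1}\,|\beta_l|$ and duality of weighted $\ell^1$ and weighted $\ell^\infty$ norms. If the downstream results (the effective-noise bound and the choice of $\lambda$) are to retain the stated $L^{-1/2}$ gain, either the regularizer or the proposition must be modified; as stated, the constant $L^{-1/2}$ cannot be proved because it is not true.
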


\begin{proof}[Proof of Proposition \ref{claim:sc-02-09-1}]
We apply the elementary inequalities 
\begin{align*}(\sqrt{L-l+1}|\!|\beta_{\geq l}|\!|_2- \sqrt{L-l+1}
|\beta_l|)^2&\geq (L-l+1)|\!|\beta_{\geq l}|\!|_2^2- (L-l+1)
|\beta_l|^2\\ &=(L-l+1)\sum_{j>l}\beta_j^2
\\ &=\left(\sqrt{(L-l+1)\sum_{j>l}\beta_j^2}\right)^2\\
\mbox{Cauchy-Schwarz}~\Rightarrow\hspace{1cm}
&\geq \left(\sum_{j>l}\beta_j\right)^2\,,\end{align*} to derive 
\begin{align*}{\mathcal N}^1(\boldsymbol{\beta})&=
\sum_{l=1}^L\sqrt{L-l+1}|\!|\beta_{\geq l}|\!|_2\\
&\geq \sum_{l=1}^L\left(\sqrt{L-l+1}|\beta_l|+\sum_{j>l}|\beta_j|\right)\\
&= \sum_{l=1}^L\left(\sqrt{L-l+1}+l-1\right)|\beta_l|\\
&\geq \sqrt L|\beta_l|\,.\end{align*}
Since ${\mathcal N}^1_\star(\boldsymbol{\alpha})\leq |\!|
\boldsymbol{\alpha}|\!|_\infty {\mathcal N}^1(
\boldsymbol{1})$, where $\boldsymbol{1}\in\mathbb R^L$ 
is the \textit{all-one} vector, it suffices to show that 
${\mathcal N}^1_\star(\boldsymbol{1})\leq L^{-\frac 12}$. Since 
$${\mathcal N}^1_\star(\boldsymbol{1})=\max_{\mathcal N^1(\boldsymbol
{\beta})= 1}\sum_{j\in [L]}\beta_j\,,$$ and 
\begin{align*}1&=\mathcal N^1(\boldsymbol
{\beta})\\ &=\sum_{j\in [L]}\sqrt{L-l+1}|\!|\beta_{\geq j}|\!|_2\\ 
&\geq\sqrt L\sum_{j\in [L]}|\beta_j|\,,\end{align*} the claim follows.
\end{proof}

With the above proposition dealing with the case $M=1$, we can formulate in general the following proposition.

\begin{proposition}[$L^\infty$ norm bounds dual norm, general case]\label{lem:2-9-sc2}
Suppose $\boldsymbol{\alpha}\in \mathbb R^{ML}$, and 
\begin{align*}{\mathcal N}(\boldsymbol{\alpha})&=
\sum_{l=1}^L\sqrt{M(L-l+1)}|\!|\boldsymbol{\alpha}_{(\geq l)}|\!|_2\,,\\
\mbox{where}\hspace{1cm} |\!|\boldsymbol{\alpha}_{(\geq l)}|\!|_2:
&=\sqrt{\sum_{j=l}^L\sum_{m=1}^M\alpha^2_{(m-1)+j}}\,.\end{align*} 
Then ${\mathcal N}_\star(\boldsymbol{\alpha})\leq L^{-\frac 12}
|\!|\boldsymbol{\alpha}|\!|_\infty$.
\end{proposition}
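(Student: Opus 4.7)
The plan is to reduce the general $M\geq 1$ case to the univariate result already established in Proposition~\ref{claim:sc-02-09-1}. Identify $\boldsymbol{\beta}\in\mathbb R^{ML}$ with an $M\times L$ matrix whose $m$-th row is $\tilde{\boldsymbol{\beta}}^m\deq(\beta^m_1,\dots,\beta^m_L)$, and similarly $\tilde{\boldsymbol{\alpha}}^m$ for $\boldsymbol{\alpha}$. Then $\langle\boldsymbol{\alpha},\boldsymbol{\beta}\rangle=\sum_{m=1}^M\langle\tilde{\boldsymbol{\alpha}}^m,\tilde{\boldsymbol{\beta}}^m\rangle$ and $|\!|\boldsymbol{\beta}_{\mathcal G_l}|\!|_{\mathbb F}=\sqrt{\sum_m|\!|\tilde{\boldsymbol{\beta}}^m_{\geq l}|\!|_2^2}$, which connects the multi-process norm to the single-process norms $\mathcal N^1(\tilde{\boldsymbol{\beta}}^m)=\sum_{l=1}^L\sqrt{L-l+1}\,|\!|\tilde{\boldsymbol{\beta}}^m_{\geq l}|\!|_2$ appearing in Proposition~\ref{claim:sc-02-09-1}.

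The key lower bound I would establish is $\mathcal N(\boldsymbol{\beta})\geq\sum_{m=1}^M\mathcal N^1(\tilde{\boldsymbol{\beta}}^m)$. To see this, I apply Cauchy--Schwarz coordinate-wise in $m$: for nonnegative scalars $a_1,\dots,a_M$ one has $\sqrt{\sum_m a_m^2}\geq M^{-1/2}\sum_m a_m$. Taking $a_m=|\!|\tilde{\boldsymbol{\beta}}^m_{\geq l}|\!|_2$ gives $|\!|\boldsymbol{\beta}_{\mathcal G_l}|\!|_{\mathbb F}\geq M^{-1/2}\sum_m|\!|\tilde{\boldsymbol{\beta}}^m_{\geq l}|\!|_2$, so that $\sqrt{M(L-l+1)}\,|\!|\boldsymbol{\beta}_{\mathcal G_l}|\!|_{\mathbb F}\geq\sqrt{L-l+1}\sum_m|\!|\tilde{\boldsymbol{\beta}}^m_{\geq l}|\!|_2$. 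Summing over $l$ and exchanging the order of summation yields the claimed bound. The $\sqrt{M}$ weight baked into $\mathcal N$ is precisely what is needed to wash out the loss incurred by the Cauchy--Schwarz step.

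Having this, the dual bound follows by a two-step Hölder-type argument: for any $\boldsymbol{\beta}$ with $\mathcal N(\boldsymbol{\beta})\leq 1$,
\begin{equation*}
\langle\boldsymbol{\alpha},\boldsymbol{\beta}\rangle=\sum_{m=1}^M\langle\tilde{\boldsymbol{\alpha}}^m,\tilde{\boldsymbol{\beta}}^m\rangle\leq\sum_{m=1}^M\mathcal N^1_\star(\tilde{\boldsymbol{\alpha}}^m)\mathcal N^1(\tilde{\boldsymbol{\beta}}^m)\leq\Bigl(\max_{m\in[M]}\mathcal N^1_\star(\tilde{\boldsymbol{\alpha}}^m)\Bigr)\sum_{m=1}^M\mathcal N^1(\tilde{\boldsymbol{\beta}}^m),
\end{equation*}
and the sum on the right is at most $\mathcal N(\boldsymbol{\beta})\leq 1$. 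Invoking Proposition~\ref{claim:sc-02-09-1} row-by-row, $\mathcal N^1_\star(\tilde{\boldsymbol{\alpha}}^m)\leq L^{-1/2}|\!|\tilde{\boldsymbol{\alpha}}^m|\!|_\infty\leq L^{-1/2}|\!|\boldsymbol{\alpha}|\!|_\infty$, finishing the proof after taking the supremum over admissible $\boldsymbol{\beta}$. The main obstacle, as anticipated, is the bookkeeping of the $\sqrt{M}$ factor: one must verify that the Cauchy--Schwarz step is not wasteful and that the weights in $\mathcal N$ exactly match the weights in $\mathcal N^1$ after pulling out the $M$-dependence, so that the univariate constant $L^{-1/2}$ is inherited without degradation in $M$.
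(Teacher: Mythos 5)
Your proof is correct, but it reduces to the univariate case along a different axis than the paper does. You slice the $M\times L$ parameter array by \emph{rows} (processes): you prove the lower bound $\mathcal N(\boldsymbol{\beta})\geq\sum_{m=1}^M\mathcal N^1(\tilde{\boldsymbol{\beta}}^m)$ via Cauchy--Schwarz across $m$ at each fixed lag $l$, and then apply the univariate dual bound of Proposition~\ref{claim:sc-02-09-1} to each process separately, combining with a H\"older step. The paper instead slices by \emph{columns} (lags): it first reduces to the all-one vector $\boldsymbol{\alpha}=\boldsymbol{1}$, sets $x_l\deq\sqrt M\,|\!|\boldsymbol{\beta}_{(l)}|\!|_2$ where $\boldsymbol{\beta}_{(l)}$ collects the lag-$l$ coefficients of all $M$ processes, uses Cauchy--Schwarz to get $\sum_j\beta_j\leq\sum_l x_l$, and exploits the exact identity $\mathcal N(\boldsymbol{\beta})=\mathcal N^1(\boldsymbol{x})$ to invoke the univariate result once, on the aggregated vector $\boldsymbol{x}\in\mathbb R^L$. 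Both routes spend the $\sqrt M$ weight built into $\mathcal N$ on a single Cauchy--Schwarz step in the $m$-direction and inherit the constant $L^{-1/2}$ without degradation; the paper's column-wise identity is slightly tighter bookkeeping (an equality of norms rather than your one-sided bound $\mathcal N\geq\sum_m\mathcal N^1$), while your row-wise decomposition avoids the preliminary reduction to the all-one vector and makes the per-process structure explicit, which is arguably more transparent. One cosmetic caveat: Proposition~\ref{claim:sc-02-09-1} is stated in the paper under the standing assumption $\prod_j\alpha_j\neq 0$, whereas your rows $\tilde{\boldsymbol{\alpha}}^m$ may have vanishing coordinates; this is harmless since the bound extends by continuity, but it is worth a remark.
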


\begin{proof}[Proof of Proposition \ref{lem:2-9-sc2}]
Again, it suffices to show that ${\mathcal N}(\boldsymbol{
\alpha})\leq L^{-\frac 12}$ when $\boldsymbol{
\alpha}$ is the all-one vector.\\

Now, ${\mathcal N}_\star(\boldsymbol{
\alpha})$ is the optimum value of the problem 
\begin{align*}\mbox{maximize}\hspace{0.5cm}&\sum_{j\in [ML]}\beta_j\\
\mbox{subject to}\hspace{0.5cm}&{\mathcal N}(\boldsymbol{
\beta})=1\,.\end{align*} Write $\boldsymbol{
\beta}_{(l)}\deq(\beta_l,\beta_{L+l},\cdots,
\beta_{(M-1)L+l})^\top$ and $x_l\deq\sqrt M|\!|\boldsymbol{
\beta}_{(l)}|\!|_2$ for $l\in [L]$; by Cauchy-Schwarz, 
we have \begin{align*}\sum_{j\in [ML]}\beta_j &
\leq \sum_{l=1}^Lx_l\,.\end{align*} Moreover, $
{\mathcal N}(\boldsymbol{\beta})={\mathcal N}^1(\boldsymbol{x})$, where 
$\boldsymbol{x}\deq(x_1,\cdots,x_L)$; thus, the lemma 
follows by \ref{claim:sc-02-09-1}.
\end{proof}

\bigskip

\section{Some known results used in the proofs}\label{blah-blah1} 
The following tail bound on the norm of Gaussian random vectors is well-known; see \citep[Theorem 6.2.1]{vershynin} for details. 

\begin{proposition}[Wright-Hansen inequality]\label{cor:25-10-sc20}
There is an absolute constant $c_0>0$ such that the following statement holds. If $\boldsymbol{q}\sim N(\boldsymbol{0},\Sigma)$ in $\mathbb R^d$, where $\Sigma$ is symmetric positive definite, then the following holds for any $\tau>0$: \begin{align*}\mathbb P\left[\frac 1d\left\vert \boldsymbol{q}^T\boldsymbol{q}-\tr(\Sigma)\right\vert \geq 4\tau|\!|\Sigma|\!|_{\oop}\right]&\leq 2e^{-c_0d\min\{\tau^2, \tau\}}\,.\end{align*}
\end{proposition}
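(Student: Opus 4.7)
The plan is to reduce the quadratic form $\boldsymbol{q}^T\boldsymbol{q}$ to a weighted sum of squares of independent standard Gaussians, and then apply a Bernstein-type tail bound for sub-exponential sums. Concretely, I would first factor $\Sigma = \Sigma^{1/2}\Sigma^{1/2}$ (using symmetric positive-definiteness) and write $\boldsymbol{q} = \Sigma^{1/2}\boldsymbol{z}$ with $\boldsymbol{z}\sim N(\boldsymbol{0},I_d)$, so that $\boldsymbol{q}^T\boldsymbol{q} = \boldsymbol{z}^T\Sigma\boldsymbol{z}$. Next, I would diagonalise $\Sigma = U\Lambda U^T$ with $\Lambda = \operatorname{diag}(\lambda_1,\dots,\lambda_d)$ and use orthogonal invariance of the standard Gaussian to replace $U^T\boldsymbol{z}$ by a fresh standard Gaussian $\boldsymbol{z}'$. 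This yields the clean representation
\begin{equation*}
\boldsymbol{q}^T\boldsymbol{q} - \operatorname{tr}(\Sigma) \;=\; \sum_{i=1}^d \lambda_i\bigl(z_i'^{\,2}-1\bigr),
\end{equation*}
which is a centred weighted sum of i.i.d.\ sub-exponential random variables.

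The second step is the Bernstein inequality for sums of centred, independent sub-exponential variables: each $z_i'^{\,2}-1$ has sub-exponential norm bounded by an absolute constant $K$, so for any $t>0$,
\begin{equation*}
\mathbb P\!\left[\bigl|\textstyle\sum_i \lambda_i(z_i'^{\,2}-1)\bigr| \geq t\right]
\;\leq\; 2\exp\!\Bigl(-c\min\!\bigl\{\tfrac{t^2}{K^2\sum_i\lambda_i^2},\,\tfrac{t}{K\max_i\lambda_i}\bigr\}\Bigr).
\end{equation*}
I would then plug in the natural scale $t = 4\tau d\,\|\Sigma\|_{\oop}$ together with the crude but sufficient estimates $\sum_i\lambda_i^2 \leq d\,\|\Sigma\|_{\oop}^2$ and $\max_i\lambda_i = \|\Sigma\|_{\oop}$. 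The quadratic term reduces to $c\cdot 16\tau^2 d/K^2$, and the linear term to $c\cdot 4\tau d/K$, so after absorbing $K$ and the numerical factors into a single absolute constant $c_0 > 0$ the bound becomes $2e^{-c_0 d\,\min\{\tau^2,\tau\}}$, which is exactly the claimed inequality.

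Essentially nothing in this chain is hard: the diagonalisation step is a one-line computation, and the Bernstein inequality for centred sub-exponentials is a classical result (see e.g.\ \citep[Theorem 2.8.1]{vershynin}). The only place where one must be slightly careful is in ensuring that the two regimes of the Bernstein bound, $\tau^2$ versus $\tau$, are tracked through the minimum correctly; this is what produces the $\min\{\tau^2,\tau\}$ in the exponent and corresponds precisely to the dichotomy between the Gaussian-like regime (small $\tau$) and the exponential tail regime (large $\tau$) of chi-squared concentration. Since the paper cites \citet[Theorem 6.2.1]{vershynin} directly, a short proof in the appendix would simply reproduce this reduction, and no genuine technical obstacle is expected.
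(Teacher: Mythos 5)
Your proof is correct, but it takes a more elementary route than the paper, which offers no proof at all: the paper simply invokes the Hanson--Wright inequality as stated in \citet[Theorem~6.2.1]{vershynin}, applied to $\boldsymbol{q}^T\boldsymbol{q}=\boldsymbol{z}^T\Sigma\boldsymbol{z}$ with $\boldsymbol{z}\sim N(\boldsymbol{0},I_d)$, together with $\|\Sigma\|_F^2\leq d\|\Sigma\|_{\oop}^2$ and the choice $t=4\tau d\|\Sigma\|_{\oop}$. Your argument replaces that black box with a genuinely lighter tool: because the vector is Gaussian, rotation invariance of $N(\boldsymbol{0},I_d)$ lets you diagonalise $\Sigma$ and reduce the quadratic form to the centred weighted chi-squared sum $\sum_i\lambda_i(z_i'^{\,2}-1)$, at which point only the scalar Bernstein inequality for independent sub-exponential summands is needed (strictly, the weighted-sum version \citep[Theorem~2.8.2]{vershynin}, or Theorem~2.8.1 applied to $X_i\deq\lambda_i(z_i'^{\,2}-1)$ as you implicitly do --- a cosmetic point of citation only). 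What the full Hanson--Wright theorem buys, and what you correctly identify as unnecessary here, is the decoupling machinery that handles off-diagonal terms $\sum_{i\neq j}a_{ij}X_iX_j$ for general sub-Gaussian coordinates, where no change of basis can remove them; in the Gaussian case that machinery is superfluous. Your final bookkeeping --- $t=4\tau d\|\Sigma\|_{\oop}$, $\sum_i\lambda_i^2\leq d\|\Sigma\|_{\oop}^2$, $\max_i\lambda_i=\|\Sigma\|_{\oop}$, and $\min\bigl\{16\tau^2d/K^2,\,4\tau d/K\bigr\}\geq \min\bigl\{16/K^2,\,4/K\bigr\}\,d\min\{\tau^2,\tau\}$ --- is exactly right, so the resulting constant $c_0$ is indeed absolute. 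In short: same statement, but your proof is self-contained and more elementary than the citation the paper relies on.
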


We need to collect some information about the time series model in equation \eqref{eq:0}. 
Let $T\deq\min_{m\in [M]}T_m$, and recall that we have written $D=T_1+\cdots+T_M$. Let $\Gamma^{(m,l)}$ be the covariance matrix of the random vector $X^{(m,l)}$; this is a symmetric matrix, and for any $r,s\in \{1-l,\dots,T_m-l\}$ with $r\leq s$, the stationarity of the time series yields \begin{equation}\label{eq:06}\Gamma^{(m,l)}_{r,s}=\mathbb E[X^{(m,l)}_{r-l}X^{(m,l)}_{s-l}]=\mathbb E[X^m_0X^m_{s-r}]\,.\end{equation} Thus, we 
can write $\Gamma^{(m)}\deq\Gamma^{(m,l)}$ for the ``auto-covariance" matrix.


Now, let further, for any 
two integers $T>L>0$, $ _{T,L}\Gamma$ denote 
the $T_1L\times T_1L$ matrix, whose $(r,s)$-th block 
is the covariance of the vectors $(X_{r-1},X_{r-2},\dots,X_{r-L})$ 
and $(X_{s-1},X_{s-2},\dots,X_{s-L})$. 

The following lemma gives eigenvalue bounds of these block covariance matrices. 

\begin{lemma}[Eigenvalue bound for Toeplitz matrices]
\label{lem:06-09-sc1}
Let $f(z)$ denote the reverse characteristic polynomial of 
a stable univariate AR-process $\{X_t\}$. 
Let $\mathfrak m(f)=\inf_{|z|\leq 1}|f(z)|^2$ and 
$\mathfrak M(f)=\sup_{|z|\leq 1}|f(z)|^2$. The following inequality 
holds: $$\frac{\Lambda_{\min}(\Sigma_\epsilon)}
{\mathfrak M(f)}\leq \Lambda_{\min}( _{T,L}\Gamma)
\leq \Lambda_{\max}( _{T,L}\Gamma)\leq \frac{\Lambda_{\max}(\Sigma_\epsilon)}
{\mathfrak m(f)}\,.$$
\end{lemma}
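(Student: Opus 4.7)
The plan is to follow the classical spectral approach for eigenvalue bounds of Toeplitz-type covariance matrices arising from stable time series, as used in Proposition~2.4 of \cite{MR3357870}. The idea is to express the quadratic form $v^\top \, {}_{T,L}\Gamma \, v$ as an integral of the scalar spectral density of $\{X_t\}$ against a nonnegative weight, and then use the pointwise sandwich of that density coming from $\mathfrak{m}(f)$ and $\mathfrak{M}(f)$.

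First, I would recall the spectral structure of the AR process. Stability of $\{X_t\}$ guarantees that $f(z)=1-a_1z-\cdots-a_{L_0}z^{L_0}$ has no zeros on the closed unit disk, so that $1/f(z)$ is analytic there and $X_t$ admits the MA representation $X_t=\sum_{k\geq 0}\psi_k U_{t-k}$ with transfer function $\Psi(z)=1/f(z)$. The spectral density of $\{X_t\}$ is then $f_X(\omega)=\sigma^2\big/\bigl(2\pi |f(e^{-i\omega})|^2\bigr)$, where $\sigma^2$ is the scalar noise variance. The maximum modulus principle, combined with the definitions of $\mathfrak{m}(f)$ and $\mathfrak{M}(f)$, yields the pointwise sandwich
\begin{equation*}
\frac{\Lambda_{\min}(\Sigma_\epsilon)}{2\pi\mathfrak{M}(f)}\leq f_X(\omega)\leq \frac{\Lambda_{\max}(\Sigma_\epsilon)}{2\pi\mathfrak{m}(f)} \qquad \text{for all }\omega\in[-\pi,\pi].
\end{equation*}

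Next I would evaluate the quadratic form $v^\top \, {}_{T,L}\Gamma \, v$ for a generic $v\in\mathbb{R}^{TL}$ written in block form $(v_1,\ldots,v_T)$ with $v_r\in\mathbb{R}^L$. Using the Wiener--Khinchin identity $\gamma(h)=\int_{-\pi}^\pi e^{ih\omega}f_X(\omega)\,d\omega$, a direct expansion of $\sum_{r,s,j,k} v_{r,j}\,\gamma(r-s+k-j)\,v_{s,k}$ gives the spectral representation
\begin{equation*}
v^\top \, {}_{T,L}\Gamma \, v=\int_{-\pi}^\pi f_X(\omega)\,|V(\omega)|^2\,d\omega
\qquad \text{with}\qquad V(\omega)=\sum_{r=1}^T\sum_{j=1}^L v_{r,j}\,e^{i(r-j)\omega}.
\end{equation*}
Sandwiching $f_X$ by the pointwise bound above and invoking Parseval's identity to relate $\int_{-\pi}^\pi |V(\omega)|^2\,d\omega$ to (a multiple of) $\|v\|_2^2$ would then yield the two-sided eigenvalue bound.

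The step I expect to require most care is the translation from the spectral integral back to $\|v\|_2^2$. Since each underlying random variable $X_{r-j}$ appears in multiple positions of the stacked window vector, the frequencies $\{r-j\}$ come with multiplicities, and the Fourier coefficients $w_\alpha=\sum_{r-j=\alpha}v_{r,j}$ of $V$ are not the $v_{r,j}$ themselves. The cleanest way around this is to factor ${}_{T,L}\Gamma = M\Gamma_X M^\top$, where $\Gamma_X$ is the scalar Toeplitz autocovariance matrix of $(X_{1-L},\ldots,X_{T-1})$ and $M\in\{0,1\}^{TL\times(T+L-1)}$ is the selection matrix defined by $M_{(r,j),i}=\mathbb{1}[i=r-j+L]$; the classical scalar Grenander--Szeg\H{o} bound applied to $\Gamma_X$ delivers $\frac{\Lambda_{\min}(\Sigma_\epsilon)}{\mathfrak{M}(f)}\leq \Lambda_{\min}(\Gamma_X)\leq \Lambda_{\max}(\Gamma_X)\leq \frac{\Lambda_{\max}(\Sigma_\epsilon)}{\mathfrak{m}(f)}$, and this is then propagated through $M$ to obtain the asserted bounds on the spectrum of ${}_{T,L}\Gamma$.
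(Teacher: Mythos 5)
Your spectral set-up is sound up to the last step, and it is a genuinely different route from the paper's, which disposes of the lemma in one line by citing Proposition~2.3 of \citet{MR3357870} applied to the lag-stacked process $Y_t=(X_{t-1},\dots,X_{t-L})$. Your representation $v^\top\,{}_{T,L}\Gamma\,v=\int_{-\pi}^{\pi} f_X(\omega)|V(\omega)|^2\,d\omega$ and the factorization ${}_{T,L}\Gamma=M\Gamma_XM^\top$ are both correct, and you rightly identify the multiplicity of the frequencies $r-j$ as the crux. The gap is that the closing sentence, that the scalar Grenander--Szeg\H{o} bound ``is then propagated through $M$,'' cannot be carried out. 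For $T,L\geq 2$ the map $M^\top$ has a nontrivial kernel: a vector $v$ with entries $+1$ and $-1$ at two distinct index pairs $(r,j)\neq(s,k)$ satisfying $r-j=s-k$ gives $M^\top v=0$, so the best lower bound obtainable from $v^\top M\Gamma_XM^\top v\geq \Lambda_{\min}(\Gamma_X)\,|\!|M^\top v|\!|_2^2$ is zero. Symmetrically, the columns of $M$ have disjoint supports of size up to $\min\{T,L\}$, so $\Lambda_{\max}(MM^\top)=\min\{T,L\}$ and the upper bound you inherit is $\min\{T,L\}\cdot\Lambda_{\max}(\Gamma_X)$, not $\Lambda_{\max}(\Gamma_X)$.

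This is not a repairable technicality of your method: since the stacked window vector repeats each variable $X_{r-j}$ up to $\min\{T,L\}$ times, the matrix ${}_{T,L}\Gamma$ with overlapping windows is genuinely singular, and taking $v$ supported on one diagonal class shows $\Lambda_{\max}({}_{T,L}\Gamma)\geq \min\{T,L\}\,\mathbb E[X_0^2]$, which exceeds $\Lambda_{\max}(\Sigma_\epsilon)/\mathfrak m(f)$ as soon as $\min\{T,L\}>1/\mathfrak m(f)$. What your argument does prove cleanly is the single-window instance: for $T=1$ the matrix ${}_{1,L}\Gamma=\Gamma^{(1)}$ is the ordinary $L\times L$ Toeplitz autocovariance matrix, $M$ is a permutation, and the sandwich $f_X(\omega)=\sigma^2/(2\pi|f(e^{-i\omega})|^2)\in[\sigma^2/(2\pi\mathfrak M(f)),\,\sigma^2/(2\pi\mathfrak m(f))]$ yields exactly the displayed inequality; this is also the instance used downstream in Proposition~\ref{sc-07-11-1} and Lemma~\ref{0somelemma0}. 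Be aware that the paper's own citation runs into the same obstruction --- the lag-stacked process has a rank-one spectral density and a singular innovation covariance, so the cited bounds degenerate for $T\geq 2$ --- so you should restrict the claim to the non-overlapping (Toeplitz) case rather than try to force the propagation through $M$.
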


\begin{proof}[Proof of Lemma \ref{lem:06-09-sc1}]
This is immediate from \citep[Proposition 2.3]{MR3357870} 
if we consider the stable $\dim$-L VAR process $Y_t\deq
(X_{t-1},\dots,X_{t-L})$.
\end{proof}

The following bound appeared as  
of \citep[Proposition 2.4b]{MR3357870}.

\begin{lemma}[Basu-Michailidis]\label{lem:06-09-sc2}
Let $\Delta^{(m)}$ denote the $T_m$-dimensional matrix 
with $\Delta_{t,s}^{(m)}\deq\mathbb E[X^m_{t-l}U^m_s]$. The following inequality 
holds: $$\Lambda_{\max}(\Delta^{(m)})\leq \frac{\Lambda_{\max}(\Sigma_{\epsilon_m})
\mathfrak M(f_m)}{\mathfrak m(f)}\,.$$
\end{lemma}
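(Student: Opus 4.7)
The plan is to convert the time-domain covariance bound into a frequency-domain operator-norm bound via the moving-average representation of the stable AR process, mirroring the original Basu--Michailidis argument. First, stability of $\{X_t^m\}$ implies that $1/f_m$ is analytic on an open neighborhood of $\overline{\mathbb{D}}$, so the $\mathrm{MA}(\infty)$ expansion $X_t^m = \sum_{k \geq 0} \psi_k^{(m)} U_{t-k}^m$ holds in $L^2$, with absolutely summable coefficients $(\psi_k^{(m)})_{k \geq 0}$ given by the Taylor series of $1/f_m$ at the origin. This representation, combined with the mutual independence of the innovations, is the key ingredient that will let me write $\Delta^{(m)}$ as a structured matrix.

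With this in hand, I would compute the entries of $\Delta^{(m)}$ explicitly. Using independence and common variance $\sigma_m^2$ of the innovations,
$$\Delta^{(m)}_{t,s} = \mathbb{E}[X_{t-l}^m U_s^m] = \sigma_m^2 \, \psi_{t-l-s}^{(m)} \cdot \mathbf{1}\{t - l \geq s\},$$
which identifies $\Delta^{(m)}$ as (a shift of) a finite-section lower-triangular Toeplitz matrix whose symbol on $\partial \mathbb{D}$ is $\phi_m(\theta) = \sigma_m^2 e^{-i l \theta}/f_m(e^{i\theta})$. Viewing $\Delta^{(m)}$ as the finite principal block of the infinite Toeplitz operator with symbol $\phi_m$ acting on $\ell^2(\mathbb{Z}_{\geq 0})$, the standard operator-norm inequality (finite section has norm at most that of the full Toeplitz operator, which equals the $L^\infty$-norm of its symbol) bounds $\Lambda_{\max}(\Delta^{(m)})$ by $\sigma_m^2\, \mathrm{ess\,sup}_\theta |1/f_m(e^{i\theta})|$.

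Finally, I would re-express this essential supremum in terms of the paper's quantities $\mathfrak{m}(f)$ and $\mathfrak{M}(f_m)$. Concretely, one compares the cross-spectral density of $X^m_{t-l}$ and $U^m_s$ to the marginal spectral densities of $X^m$ and $U^m$; the ratio between these two, controlled by the maximum modulus principle applied to $f_m$ and by the stability bound on $1/f$ on $\partial \mathbb{D}$, is precisely of order $\mathfrak{M}(f_m)/\mathfrak{m}(f)$. Combining, one arrives at
$$\Lambda_{\max}(\Delta^{(m)}) \leq \frac{\Lambda_{\max}(\Sigma_{\epsilon_m}) \, \mathfrak{M}(f_m)}{\mathfrak{m}(f)}.$$
The main technical obstacle is the last matching step: the naive Toeplitz bound $\sigma_m^2/\sqrt{\mathfrak{m}(f_m)}$ is cleaner but does not reproduce the stated constants, so tracking the exact factor $\mathfrak{M}(f_m)/\mathfrak{m}(f)$ requires handling the joint spectral density of $(X_{t-l}^m, U_s^m)$ rather than estimating each marginal separately, which is exactly the technical heart of Basu--Michailidis' Proposition~2.4b.
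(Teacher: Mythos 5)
The paper does not actually prove this lemma: it is imported verbatim as Proposition~2.4(b) of Basu and Michailidis (2015), so there is no internal argument to compare against, and any self-contained derivation is by construction ``a different route.'' Your reconstruction via the $\mathrm{MA}(\infty)$ representation and the Toeplitz symbol is essentially the same spectral-density machinery that Basu--Michailidis use, and its core is sound: the entry formula $\Delta^{(m)}_{t,s}=\sigma_m^2\,\psi^{(m)}_{t-l-s}\mathbf{1}\{t-l\geq s\}$ is correct, the finite-section bound by the $L^\infty$-norm of the symbol is standard, and it yields $\Lambda_{\max}(\Delta^{(m)})\leq \sigma_m^2/\sqrt{\mathfrak m(f_m)}$, where $\Lambda_{\max}$ of the non-symmetric matrix $\Delta^{(m)}$ must be read as the largest singular value (which is both what the finite-section argument controls and what the lemma is used for in the proof of Theorem~\ref{lem:main1}). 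Where you go astray is in the last paragraph: the ``matching step'' you flag as the technical heart is in fact a one-line observation rather than an obstacle. Since the reverse characteristic polynomial is normalized with $f_m(0)=1$, one has $\mathfrak m(f_m)\leq |f_m(0)|^2=1\leq \mathfrak M(f_m)$, hence
\begin{equation*}
\frac{\sigma_m^2}{\sqrt{\mathfrak m(f_m)}}\;=\;\frac{\sigma_m^2\sqrt{\mathfrak m(f_m)}}{\mathfrak m(f_m)}\;\leq\;\frac{\sigma_m^2}{\mathfrak m(f_m)}\;\leq\;\frac{\sigma_m^2\,\mathfrak M(f_m)}{\mathfrak m(f_m)}\,,
\end{equation*}
so your ``naive'' Toeplitz bound is strictly sharper than the stated one and implies it immediately (with $\Lambda_{\max}(\Sigma_{\epsilon_m})=\sigma_m^2$ in this univariate setting); no analysis of the joint spectral density of $(X^m_{t-l},U^m_s)$ is needed. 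With that replacement for your third step, the proposal is a complete and correct proof, and arguably a cleaner one than chasing the constants of the cited Proposition~2.4(b).
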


The following appeared as \citep[Lemma F.2]{MR3357870}. Recall that $\mathbb M_L(\mathbb R)$ denotes the algebra of all $L\times L$ real matrices, and $\mathbb S^{L-1}$ the unit sphere in $\mathbb R^L$.

\begin{lemma}[Basu-Michailidis]\label{lem:08-10-sc2}
Suppose that $G\in \mathbb M_L(\mathbb R)$ is a random symmetric matrix for which the following inequality holds for every $\boldsymbol{u}\in\mathbb S^{L-1}$, $T_1\in \mathbb N$, and $\eta>0$: \begin{equation*}
\mathbb P\left[\left\vert\boldsymbol{u}^TG\boldsymbol{u}\right\vert\geq C\eta\right]\leq
2e^{-cT_1\min\{\eta,\eta^2\}}\,,
\end{equation*} where $C,c>0$ are parameters independent of $\eta$ and $\boldsymbol{u}$. Then the following inequality holds for any integer $s_0>0$: \begin{align}\label{sc-08-10-11} \mathbb P\left[\sup_{|\!|\boldsymbol{u}|\!|_0\leq s_0,~ |\!|\boldsymbol{u}|\!|_2 \leq 1} \left\vert \boldsymbol{u}^TG\boldsymbol{u}\right\vert\geq C\eta\right]& \leq 2e^{-cT_1\min\{\eta,\eta^2\}+s_0\min\{\log L,\log(21eL/ s_0)\}}
\,.\end{align}
\end{lemma}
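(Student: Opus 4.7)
The plan is to deploy a standard $\varepsilon$-net discretisation that reduces the supremum over $s_0$-sparse unit vectors to a finite maximum, and then to pay for the discretisation via a union bound. The two branches in $\min\{\log L,\log(21eL/s_0)\}$ reflect two different ways of counting the resulting covering cardinality.

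First I would fix a support $S\subseteq [L]$ with $|S|=s_0$ and restrict attention to unit vectors in the coordinate subspace $\mathbb R^S$. On the unit sphere of $\mathbb R^S$ I would build an $\varepsilon$-net $\mathcal N_S$ with $|\mathcal N_S|\leq (1+2/\varepsilon)^{s_0}$ via the standard volumetric argument; choosing $\varepsilon=1/10$ yields the convenient bound $21^{s_0}$. Because $G$ is symmetric, the classical net-to-sphere passage delivers
$$\sup_{\boldsymbol u\in\mathbb R^S,\ \|\boldsymbol u\|_2=1}|\boldsymbol u^{\top} G\boldsymbol u|\ \leq\ \frac{1}{1-2\varepsilon}\,\max_{\boldsymbol v\in\mathcal N_S}|\boldsymbol v^{\top} G\boldsymbol v|,$$
so that the sphere-sup is controlled by a max over a finite set of size $21^{s_0}$.

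Then I would union-bound over all supports. The cardinality $\binom{L}{s_0}$ admits two useful bounds: $\binom{L}{s_0}\leq (eL/s_0)^{s_0}$ or the cruder $\binom{L}{s_0}\leq L^{s_0}$. Combining either with the net of size $21^{s_0}$ produces a total cardinality of at most $(21eL/s_0)^{s_0}$ or, after absorbing $\log 21$ into the implicit constants, of order $L^{s_0}$. Applying the hypothesised per-point tail bound $2e^{-cT_1\min\{\eta,\eta^2\}}$ to each net point and summing yields the claim, with the minimum of the two log-cardinalities displayed in the exponent because either choice is valid.

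The main obstacle is the bookkeeping of constants. The inflation factor $1/(1-2\varepsilon)$ converts the threshold from $C\eta$ to $C\eta/(1-2\varepsilon)$, which must be reconciled with the conclusion's $C\eta$. The clean fix is to apply the per-point hypothesis at a rescaled parameter $\eta'=(1-2\varepsilon)\eta$, noting that $\min\{\eta',\eta'^2\}$ is comparable to $\min\{\eta,\eta^2\}$ up to absolute constants that can be absorbed into $c$; an analogous absorption handles the stray $\log 21$ when one uses the bound $\binom{L}{s_0}\leq L^{s_0}$. Once these multiplicative housekeeping steps are agreed upon, the substantive content, namely the trade-off between the metric entropy of the unit sphere in $\mathbb R^{s_0}$ and the combinatorial cost of choosing the support in $[L]$, is exactly what the two branches of the minimum encode.
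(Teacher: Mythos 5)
The paper does not actually prove this lemma: it is quoted in Appendix~E as a known result, with a pointer to \citet[Lemma~F.2]{MR3357870} (which in turn follows Loh--Wainwright). Your reconstruction --- fix a support $S$ of size $s_0$, cover the unit sphere of $\mathbb R^S$ by a $(1/10)$-net of cardinality $21^{s_0}$, pass from the net to the sphere using symmetry of $G$, and union-bound over the $\binom{L}{s_0}$ supports with the two bounds $\binom{L}{s_0}\leq L^{s_0}$ and $\binom{L}{s_0}\leq (eL/s_0)^{s_0}$ --- is exactly the standard argument behind the cited result, so in substance your proof is the right one and matches the source the paper relies on.

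Two of your ``absorb the constant'' steps, however, do not literally deliver the inequality as stated here, and it is worth being precise about why. First, the net-to-sphere passage costs a factor $1/(1-2\varepsilon)=5/4$: you can conclude either $\mathbb P[\sup\geq \tfrac54 C\eta]\leq\cdots$ with the original rate $c$, or $\mathbb P[\sup\geq C\eta]\leq\cdots$ with $c$ replaced by $\tfrac{16}{25}c$ (after running the hypothesis at $\eta'=\tfrac45\eta$); you cannot keep both $C$ and $c$ unchanged, which is what the displayed conclusion asserts. This is not your error --- the original Basu--Michailidis statement inflates the threshold to $3C\eta$ precisely to keep $c$ intact, and the version transcribed in this paper has dropped that factor --- but your proof should state explicitly which of the two constants it sacrifices rather than claiming both survive. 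Second, the exponent in the conclusion is exactly $s_0\min\{\log L,\log(21eL/s_0)\}$ with no slack, so the $\log L$ branch cannot be reached by bounding the total count by $(21L)^{s_0}$ and ``absorbing $\log 21$'': that yields $s_0\log(21L)$, which exceeds $s_0\log L$. To get the $\log L$ branch one needs the sharper count $21^{s_0}\binom{L}{s_0}\leq 21^{s_0}L^{s_0}/s_0!\leq (21e/s_0)^{s_0}L^{s_0}\leq L^{s_0}$ once $s_0\geq 21e$, which is exactly the regime in which $\log L$ is the smaller branch; for $s_0<21e$ the other branch is operative anyway. With those two repairs your argument is complete and is the same proof as in the cited reference.
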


The following result was obtained as \citep[Lemma 12]{MR3015038}. Recall that, for any integer $s_0>0$, we denote $$\mathbb K(s_0)=
\{\boldsymbol{v}\in\mathbb R^L:|\!|\boldsymbol{v}|\!|_2\leq 1,
~|\!|\boldsymbol{v}|\!|_0\leq s_0\}\,.$$

\begin{lemma}[Loh-Wainwright]\label{lem:08-10-sc3}
Suppose that $G\in \mathbb M_L(\mathbb R)$ is a symmetric matrix for which the following holds: \begin{equation*}
\sup_{|\!|\boldsymbol{u}|\!|_0\leq s_0,~|\!|\boldsymbol{u}|\!|_2\leq 1} \left\vert \boldsymbol{u}^TG\boldsymbol{u} \right\vert \leq \delta
\,.\end{equation*} Then \begin{equation}\label{sc-08-10-12}
    \sup_{\boldsymbol{u}\in\mathbb R^L} \left\vert \boldsymbol{u}^TG\boldsymbol{u}\right\vert \leq 27\delta\left(|\!|\boldsymbol{u}|\!|_2^2+\frac 2{s_0}|\!|\boldsymbol{u}|\!|_1^2\right)
\,.\end{equation}
\end{lemma}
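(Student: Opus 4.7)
The strategy is the standard dyadic sparsification technique from compressed sensing: reduce a bound on sparse vectors to a bound on the whole space by decomposing along a ``peeling'' of magnitudes, bounding each block via the hypothesis, and controlling cross-terms by a decay estimate. By homogeneity of both sides in $\boldsymbol{u}$, I do not need any normalization at the outset; the bound $|\boldsymbol{u}^T G \boldsymbol{u}| \leq 27\delta(\|\boldsymbol{u}\|_2^2 + 2\|\boldsymbol{u}\|_1^2/s_0)$ is established pointwise for each $\boldsymbol{u} \in \mathbb{R}^L$ (the outer $\sup$ in the statement is cosmetic).

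First I would perform the dyadic decomposition. Sort the indices $[L]$ by decreasing values of $|\boldsymbol{u}_i|$ and partition them into consecutive blocks $B_0, B_1, B_2, \ldots$ each of size $s_0$ (the final block possibly smaller). Let $\boldsymbol{u}^{(k)}$ denote the restriction of $\boldsymbol{u}$ to $B_k$, so each $\boldsymbol{u}^{(k)}$ is $s_0$-sparse, and $\boldsymbol{u} = \sum_k \boldsymbol{u}^{(k)}$. Bilinearity gives
\begin{equation*}
\boldsymbol{u}^T G \boldsymbol{u} \;=\; \sum_{k} (\boldsymbol{u}^{(k)})^T G\, \boldsymbol{u}^{(k)} \;+\; 2\sum_{k<j} (\boldsymbol{u}^{(k)})^T G\, \boldsymbol{u}^{(j)}.
\end{equation*}
The diagonal terms are immediately controlled: by hypothesis applied to $\boldsymbol{u}^{(k)}/\|\boldsymbol{u}^{(k)}\|_2$, one gets $|(\boldsymbol{u}^{(k)})^T G\, \boldsymbol{u}^{(k)}| \leq \delta \|\boldsymbol{u}^{(k)}\|_2^2$, so the diagonal sum is at most $\delta \|\boldsymbol{u}\|_2^2$.

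The central estimate for the off-diagonal terms is the magnitude-decay inequality: for every $k \geq 1$, each coordinate in $B_k$ is dominated by the average of the magnitudes in $B_{k-1}$, which yields $\|\boldsymbol{u}^{(k)}\|_\infty \leq \|\boldsymbol{u}^{(k-1)}\|_1/s_0$ and therefore
\begin{equation*}
\|\boldsymbol{u}^{(k)}\|_2 \;\leq\; \sqrt{s_0}\,\|\boldsymbol{u}^{(k)}\|_\infty \;\leq\; \|\boldsymbol{u}^{(k-1)}\|_1/\sqrt{s_0}.
\end{equation*}
Telescoping, $\sum_{k\geq 1}\|\boldsymbol{u}^{(k)}\|_2 \leq \|\boldsymbol{u}\|_1/\sqrt{s_0}$, and so $\sum_k \|\boldsymbol{u}^{(k)}\|_2 \leq \|\boldsymbol{u}\|_2 + \|\boldsymbol{u}\|_1/\sqrt{s_0}$. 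To control $(\boldsymbol{u}^{(k)})^T G\, \boldsymbol{u}^{(j)}$ for $k\neq j$, I would use the polarization identity $4\,(\boldsymbol{u}^{(k)})^T G\, \boldsymbol{u}^{(j)} = (\boldsymbol{u}^{(k)}+\boldsymbol{u}^{(j)})^T G(\boldsymbol{u}^{(k)}+\boldsymbol{u}^{(j)}) - (\boldsymbol{u}^{(k)}-\boldsymbol{u}^{(j)})^T G(\boldsymbol{u}^{(k)}-\boldsymbol{u}^{(j)})$, which after the hypothesis (applied to the scale-normalized pieces, noting that $\boldsymbol{u}^{(k)} \pm \boldsymbol{u}^{(j)}$ has disjoint support and $\ell^2$-norm $\sqrt{\|\boldsymbol{u}^{(k)}\|_2^2 + \|\boldsymbol{u}^{(j)}\|_2^2}$) yields $|(\boldsymbol{u}^{(k)})^T G\, \boldsymbol{u}^{(j)}| \leq C\delta\, \|\boldsymbol{u}^{(k)}\|_2\, \|\boldsymbol{u}^{(j)}\|_2$ for an absolute constant $C$. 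Summing over $k<j$ and using the displayed inequality, the off-diagonal contribution is at most $C\delta(\|\boldsymbol{u}\|_2 + \|\boldsymbol{u}\|_1/\sqrt{s_0})^2 \leq 2C\delta(\|\boldsymbol{u}\|_2^2 + \|\boldsymbol{u}\|_1^2/s_0)$ by Young's inequality.

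Combining the diagonal and off-diagonal bounds and consolidating all constants gives the stated conclusion with the absolute constant $27$. The main obstacle is the polarization step: since $\boldsymbol{u}^{(k)} \pm \boldsymbol{u}^{(j)}$ is $2s_0$-sparse rather than $s_0$-sparse, applying the hypothesis requires a careful rescaling argument (or a refined peeling into blocks of size $s_0/2$), and the constants that arise from this must be tracked explicitly to land at $27$. Everything else is a routine combination of the decay estimate and Young's inequality.
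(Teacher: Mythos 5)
The paper does not actually prove this lemma: it is quoted (with $s_0=2s$) as Lemma~12 of Loh and Wainwright (2012), whose argument runs through a convex-hull containment, $\mathbb B_1(\sqrt s)\cap\mathbb B_2(1)\subseteq 3\operatorname{conv}\bigl(\mathbb B_0(s)\cap\mathbb B_2(1)\bigr)$, followed by polarization over the resulting convex combination of $s$-sparse vectors; the constant $27=9\cdot 3$ arises from the square of that factor $3$ times the polarization constant. Your dyadic-peeling route is a legitimate alternative in principle, but as written it has a genuine gap exactly where you flag one. The polarization identity produces the vectors $\boldsymbol{u}^{(k)}\pm\boldsymbol{u}^{(j)}$, which are $2s_0$-sparse, while the hypothesis only covers $s_0$-sparse vectors. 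Your first proposed remedy, ``a careful rescaling argument,'' cannot work: sparsity is scale-invariant, so no normalization of $\boldsymbol{u}^{(k)}\pm\boldsymbol{u}^{(j)}$ brings it into the admissible set $\{\boldsymbol{u}:\|\boldsymbol{u}\|_0\le s_0,\ \|\boldsymbol{u}\|_2\le1\}$. Your second remedy --- peeling into blocks of size $s_0/2$ --- is the correct one (and is precisely why the paper requires the $s_m$ to be even), but you leave it unexecuted and simply assert that the constants ``land at $27$.''

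If you carry the $s_0/2$ peeling through, the proof closes, and you do not need to hit $27$ on the nose: the statement is an upper bound, so any constant at most $27$ suffices. Concretely, with blocks of size $s_0/2$ each $\boldsymbol{u}^{(k)}\pm\boldsymbol{u}^{(j)}$ ($k\ne j$) is $s_0$-sparse with disjointly supported summands, so polarization applied to the normalized pieces gives $|(\boldsymbol{u}^{(k)})^TG\boldsymbol{u}^{(j)}|\le\delta\,\|\boldsymbol{u}^{(k)}\|_2\|\boldsymbol{u}^{(j)}\|_2$; the decay estimate becomes $\sum_{k\ge1}\|\boldsymbol{u}^{(k)}\|_2\le\sqrt{2/s_0}\,\|\boldsymbol{u}\|_1$; and hence
\begin{equation*}
|\boldsymbol{u}^TG\boldsymbol{u}|\;\le\;\delta\Bigl(\sum_k\|\boldsymbol{u}^{(k)}\|_2\Bigr)^2\;\le\;\delta\Bigl(\|\boldsymbol{u}\|_2+\sqrt{2/s_0}\,\|\boldsymbol{u}\|_1\Bigr)^2\;\le\;2\delta\Bigl(\|\boldsymbol{u}\|_2^2+\tfrac{2}{s_0}\|\boldsymbol{u}\|_1^2\Bigr),
\end{equation*}
comfortably within the stated bound. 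So the idea is sound and even yields a sharper constant than the cited one, but the submitted proof is incomplete at its central step and one of its two suggested repairs is a dead end.
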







\end{appendix}

\bigskip

\end{document}